\DeclareMathAlphabet{\mathscrbf}{OMS}{mdugm}{b}{n}
\definecolor{bckg}{RGB}{20.8, 20.8, 20.8}
\definecolor{oneblue}{rgb}{0.0, 0.0, 0.85}
\definecolor{Lightblue}{RGB}{214, 214, 214}
\definecolor{bluepigment}{rgb}{0.2, 0.2, 0.6}
\definecolor{charcoal}{rgb}{0.21, 0.27, 0.31}
\definecolor{denimblue}{rgb}{0.08, 0.38, 0.74}
\definecolor{Lightgray}{rgb}{0.89, 0.89, 0.89}
\definecolor{darkgrey}{rgb}{0.273, 0.281, 0.30}
\definecolor{darkelectricblue}{rgb}{0.33, 0.41, 0.47}
\titleformat{\section}[block]
  {\color{NavyBlue}\Large\sffamily\bfseries}
  {}
  {0.0em}
  {\colorbox{bckg!5}{\strut\parbox{\dimexpr\linewidth-2\fboxsep\relax}{\thesection. #1}}}
  [\vspace*{0.33em}]
\titleformat{\paragraph}[runin]
  {\color{bluepigment}\sffamily\small\bfseries}
  {}
  {0em}
  {#1}
\titlespacing{\section}{0.0em}{1.5em plus 2pt minus 2pt}%
{1.0em plus 2pt minus 2pt}[0em]
\titlespacing{\subsection}{0.5em}{1.5em plus 2pt minus 2pt}%
{1.0em}[0em]
\titlespacing{\subsubsection}{0.5em}{1.5em plus 2pt minus 2pt}%
{1.0em plus 2pt minus 2pt}[0em]
\newlength{\tocsep} 
\def\@setauthors{%
  \begingroup
  \def\thanks{\protect\thanks@warning}%
  \trivlist
  \centering\footnotesize \@topsep30\p@\relax
  \advance\@topsep by -\baselineskip
  \item\relax
  \author@andify\authors
  \def\\{\protect\linebreak}%
  \textsc{\normalsize\textcolor{darkelectricblue}{\authors}}%
  \ifx\@empty\contribs
  \else
    ,\penalty-3 \space \@setcontribs
    \@closetoccontribs
  \fi
  \endtrivlist
  \endgroup
}
\def\@settitle{\begin{center}%
  \baselineskip14\p@\relax
    \bfseries
    \textsc{\Large\textcolor{charcoal}{\@title}}
  \end{center}%
}
\setlist[description]{%
  topsep=30pt,               % space before start / after end of list
  itemsep=5pt,               % space between items
  font={\bfseries\sffamily\color{NavyBlue}}, % if colour is needed
}
\newcommand*\Title{\textcolor{bluepigment}{On the reducibility and the lenticular sets}}
\newcommand*\Authors{\textcolor{bluepigment}{D.~Dutykh \& J.-L.~Verger-Gaugry}}
\newcommand*{\plogo}{\textcolor{gray}{{\texttt{arXiv.org} / \textsc{hal}}}} % Generic publisher logo
\numberwithin{equation}{section}
\newtheorem{lemma}{Lemma}
\newtheorem{remark}{Remark}
\newtheorem{example}{Example}
\newtheorem{theorem}{Theorem}
\newtheorem{conjecture}{Conjecture}
\newtheorem{proposition}{Proposition}
\newcommand{\N}{\mathds{N}}
\newcommand{\Q}{\mathds{Q}}
\newcommand{\T}{\mathds{T}}
\newcommand{\Z}{\mathds{Z}}
\newcommand{\B}{\mathcal{B}}
\newcommand{\D}{\mathcal{D}}
\newcommand{\G}{\mathcal{G}}
\newcommand{\K}{\mathcal{K}}
\newcommand{\W}{\mathcal{W}}
\renewcommand{\L}{\mathcal{L}}
\renewcommand{\O}{\mathcal{O}}
\renewcommand{\P}{\mathcal{P}}
\renewcommand{\epsilon}{\varepsilon}
\newcommand{\ue}{\mathrm{e}}
\newcommand{\ui}{\mathrm{i}}
\renewcommand{\leq}{\leqslant}
\renewcommand{\geq}{\geqslant}
\DeclareMathOperator{\lo}{Log\,}
\newcommand{\abs}[1]{\left\lvert\, #1\, \right\rvert}
\newcommand{\norm}[1]{\left\lvert\left\lvert\, #1\, \right\rvert\right\rvert}
\newcommand{\cf}{\emph{cf.}\xspace}
\newcommand{\ie}{\emph{i.e.}\xspace}
\newcommand{\etc}{\emph{etc.}\xspace}
\begin{document}

\title[\Title]{On the Reducibility and the Lenticular Sets of Zeroes of Almost Newman Lacunary Polynomials}

\author[D.~Dutykh]{Denys Dutykh}
\address{\textbf{D.~Dutykh:} Univ. Grenoble Alpes, Univ. Savoie Mont Blanc, CNRS, LAMA, 73000 Chamb\'ery, France and LAMA, UMR 5127 CNRS, Universit\'e Savoie Mont Blanc, Campus Scientifique, F-73376 Le Bourget-du-Lac Cedex, France}
\email{Denys.Dutykh@univ-smb.fr}
\urladdr{http://www.denys-dutykh.com/}

\author[J.-L.~Verger-Gaugry]{Jean-Louis Verger-Gaugry$^*$}
\address{\textbf{J.-L.~Verger-Gaugry:} Univ. Grenoble Alpes, Univ. Savoie Mont Blanc, CNRS, LAMA, 73000 Chamb\'ery, France and LAMA, UMR 5127 CNRS, Universit\'e Savoie Mont Blanc, Campus Scientifique, F-73376 Le Bourget-du-Lac Cedex, France}
\email{Jean-Louis.Verger-Gaugry@univ-smb.fr}
\urladdr{https://www.researchgate.net/profile/Jean\_Louis\_Verger-Gaugry/}
\thanks{$^*$ Corresponding author}

%%% ------------------------------------------------------------------------ %%%

\begin{titlepage}
\thispagestyle{empty} % Remove page numbering on this page
\noindent
{\Large Denys \textsc{Dutykh}}\\
{\it\textcolor{gray}{LAMA--CNRS, Universit\'e Savoie Mont Blanc, France}}
\\[0.02\textheight]
{\Large Jean-Louis \textsc{Verger-Gaugry}}\\
{\it\textcolor{gray}{LAMA--CNRS, Universit\'e Savoie Mont Blanc, France}}
\\[0.10\textheight]

\vspace*{4em}

\colorbox{Lightblue}{
  \parbox[t]{1.0\textwidth}{
    \centering\huge\sc
    \vspace*{0.7cm}
    
    \textcolor{bluepigment}{On the Reducibility and the Lenticular Sets of Zeroes of Almost Newman Lacunary Polynomials}

    \vspace*{0.7cm}
  }
}

\vfill % Whitespace between the title block and the publisher

\raggedleft     % Right-align all text
{\large \plogo} % Publisher and logo
\end{titlepage}

%%% ------------------------------------------------------------------------ %%%

\newpage
\thispagestyle{empty} % Remove page numbering on this page
\par\vspace*{\fill}   % Whitespace until the bottom
\begin{flushright} % Right-align all text
{\textcolor{denimblue}{\textsc{Last modified:}} \today}
\end{flushright}

%%% ----------------------------------------------------------------------- %%%

\newpage
\maketitle
\thispagestyle{empty}

%%% ----------------------------------------------------------------------- %%%

\begin{abstract}

The class $\B$ of lacunary polynomials $f\,(x)\ :=\ -1\ +\ x\ +\ x^{\,n}\ +\
x^{\,m_{\,1}}\ +\ x^{\,m_{\,2}}\ +\ \ldots\ +\ x^{\,m_{\,s}}\,$, where $s\ \geq\ 0\,$, $m_{\,1}\ -\ n\ \geq\ n\ -\ 1\,$, $m_{\,q+1}\ -\ m_{\,q}\ \geq\ n\ -\ 1$ for $1\ \leq\ q\ <\ s\,$, $n\ \geq\ 3$ is studied. A polynomial having its coefficients in $\{\,0,\,1\,\}$ except its constant coefficient equal to $-1$ is called an \emph{almost} \textsc{Newman} polynomial. A general theorem of factorization of the almost \textsc{Newman} polynomials of the class $\B$ is obtained. Such polynomials possess lenticular roots in the open unit disk off the unit circle in the small angular sector $-\pi/18\ \leq\ \arg\,z\ \leq\ \pi/18$ and their nonreciprocal parts are always irreducible. The existence of lenticuli of roots is a peculiarity of the class $\B\,$. By comparison with the \textsc{Odlyzko--Poonen} Conjecture and its variant Conjecture, an \emph{Asymptotic Reducibility Conjecture} is formulated aiming at establishing the proportion of irreducible polynomials in this class. This proportion is conjectured to be $3/4$ and estimated using Monte-Carlo methods. The numerical approximate value $\approx\ 0.756$ is obtained. The results extend those on trinomials (\textsc{Selmer}) and quadrinomials (\textsc{Ljunggren, Mills, Finch} and \textsc{Jones}).

\bigskip\bigskip
\noindent \textbf{\keywordsname:} Lacunary polynomial; Newman polynomial; Almost Newman polynomial; Reducibility; Lenticular root \\

\smallskip
\noindent \textbf{MSC:} \subjclass[2010]{ 11C08 (primary), 65H04 (secondary)}
\smallskip \\
\noindent \textbf{PACS:} \subjclass[2010]{ 02.70.Uu (primary), 02.60.Cb (secondary)}

\end{abstract}

%%% ----------------------------------------------------------------------- %%%

\newpage
\tableofcontents
\thispagestyle{empty}

%%% ----------------------------------------------------------------------- %%%

\newpage
\section{Introduction}

In this note, for $n\ \geq\ 3\,$, we study the factorization of the polynomials
\begin{equation}\label{basic}
  f\,(x)\ :=\ -1\ +\ x\ +\ x^{\,n}\ +\ x^{\,m_{\,1}}\ +\ x^{\,m_{\,2}}\ +\ \ldots\ +\ x^{\,m_{\,s}}\,,
\end{equation}
where $s\ \geq\ 0\,$, $m_{\,1}\ -\ n\ \geq\ n\ -\ 1\,$, $m_{\,q\,+\,1}\ -\ m_{\,q}\ \geq\ n\ -\ 1$ for $1\ \leq\ q\ <\ s\,$. Denote by $\B$ the class of such polynomials, and by $\B_{\,n}$ those whose third monomial is exactly $x^{\,n}\,$, so that
\begin{equation*}
  \B\ =\ \bigcup_{n\,\geq\,2} \B_{\,n}\,.
\end{equation*}
The case $s\ =\ 0$ corresponds to the trinomials $G_{\,n}\,(z)\ :=\ -1\ +\ z\ +\ z^{\,n}$ studied by \textsc{Selmer} \cite{Selmer1956}. Let $\theta_{\,n}$ be the unique root of the trinomial $G_{\,n}\,(z)\ =\ 0$ in $(0,\,1)\,$. The algebraic integers $\theta_{\,n}^{\,-1}\ >\ 1$ are \textsc{Perron} numbers. The sequence $(\theta_{\,n}^{\,-1})_{\,n\ \geq\ 2}$ tends to $1$ if $n$ tends to $+\,\infty\,$.

\begin{theorem}[Selmer \cite{Selmer1956}]
Let $n\ \geq\ 2\,$. The trinomials $G_{\,n}\,(x)$ are irreducible if $n\ \not\equiv\ 5 ~({\rm mod}~ 6)\,$, and, for $n\ \equiv\ 5 ~({\rm mod}~ 6)\,$, are reducible as product of two irreducible factors whose one is the cyclotomic factor $x^{\,2}\ -\ x\ +\ 1\,$, the other factor $(-1\ +\ x\ +\ x^{\,n})\,/\,(x^{\,2}\ -\ x\ +\ 1)$ being nonreciprocal of degree $n\ -\ 2\,$.
\end{theorem}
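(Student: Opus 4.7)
The plan is to split the argument into identifying the cyclotomic factors of $G_{\,n}\,$, and then establishing irreducibility of the non-cyclotomic cofactor.

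For the cyclotomic part, I would start from the observation that any root $\zeta$ of $G_{\,n}$ lying on the unit circle satisfies $\zeta^{\,n}\ =\ 1\ -\ \zeta\,$, so $\abs{1\ -\ \zeta}\ =\ 1\,$. The simultaneous constraints $\abs{\zeta}\ =\ \abs{1\ -\ \zeta}\ =\ 1$ admit exactly the two solutions $\zeta\ =\ \ue^{\,\pm\,\ui\,\pi\,/\,3}\,$, the primitive sixth roots of unity. Therefore the only cyclotomic polynomial that can divide $G_{\,n}$ is $\Phi_{\,6}\,(x)\ =\ x^{\,2}\ -\ x\ +\ 1\,$. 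Substituting $\zeta\ =\ \ue^{\,\ui\,\pi\,/\,3}$ into $\zeta^{\,n}\ =\ \ue^{\,-\,\ui\,\pi\,/\,3}$ gives the congruence $n\ \equiv\ 5 \pmod{6}\,$, which is precisely the condition for $\Phi_{\,6}\ \mid\ G_{\,n}\,$; the cofactor then has degree $n\ -\ 2\,$.

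Now write $G_{\,n}\,(x)\ =\ \Phi_{\,6}^{\,\epsilon}\,(x)\cdot P_{\,n}\,(x)$ with $\epsilon\ \in\ \{\,0,\,1\,\}$ dictated by the above dichotomy. I would first note that $P_{\,n}$ is nonreciprocal: $G_{\,n}$ itself is nonreciprocal since its coefficient sequence $(-1,\,1,\,0,\,\ldots,\,0,\,1)$ is not palindromic, and $\Phi_{\,6}$ is reciprocal, so $P_{\,n}$ inherits nonreciprocity (the reciprocal polynomial $G_{\,n}^{\,*}\,(x)\ =\ 1\ +\ x^{\,n\,-\,1}\ -\ x^{\,n}$ satisfies $G_{\,n}^{\,*}\ =\ \Phi_{\,6}\,P_{\,n}^{\,*}\,$, and $P_{\,n}^{\,*}\ =\ \pm\,P_{\,n}$ would force $G_{\,n}^{\,*}\ =\ \pm\,G_{\,n}\,$, a contradiction). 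The substance lies in proving irreducibility of $P_{\,n}$ over $\Z\,[\,x\,]\,$. I would exploit that $G_{\,n}$ has a unique real root $\theta_{\,n}\ \in\ (0,\,1)\,$, which follows from $G_{\,n}\,(0)\ <\ 0\ <\ G_{\,n}\,(1)$ and the positivity of $G_{\,n}^{\,\prime}\,(x)\ =\ n\,x^{\,n\,-\,1}\ +\ 1$ on $(0,\,+\,\infty)\,$, together with the fact that $\theta_{\,n}^{\,-\,1}$ is a \textsc{Perron} number. Any nontrivial factorization $P_{\,n}\ =\ A\cdot B$ in $\Z\,[\,x\,]$ must concentrate $\theta_{\,n}$ in a single factor, say $A\,$, while the relation $A\,(0)\,B\,(0)\ =\ \pm\,1$ imposes $\abs{A\,(0)}\ =\ \abs{B\,(0)}\ =\ 1\,$. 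A careful modulus analysis of the roots of $G_{\,n}\,$, all of which cluster near the unit circle for large $n\,$, then contradicts integrality of the coefficients of the factor not containing $\theta_{\,n}\,$.

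The main obstacle is exactly this last irreducibility step. The cyclotomic reduction is elementary, but excluding every nontrivial integer factorization of $P_{\,n}$ is the genuine difficulty and forms the substance of \textsc{Selmer}'s original 1956 argument. One route is a resultant-theoretic analysis treating the three residue classes $n \pmod{6}$ separately and combining an explicit expression for the discriminant of $G_{\,n}$ with local arithmetic constraints; an alternative is a Newton-polygon or $p$-adic valuation study at carefully chosen primes. In either approach, the crucial input is the tight control on the root geometry of $G_{\,n}$ combined with the arithmetic constraints coming from $A\,(0)\,B\,(0)\ =\ \pm\,1$ and integrality of the coefficients of $A$ and $B\,$.
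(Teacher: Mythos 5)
Your first step is correct and complete: a root of $G_{\,n}$ on the unit circle must satisfy $\abs{\zeta}\ =\ \abs{1\ -\ \zeta}\ =\ 1\,$, which pins down $\zeta\ =\ \ue^{\,\pm\,\ui\,\pi/3}\,$, and the congruence $n\ \equiv\ 5\ ({\rm mod}~6)$ then characterizes exactly when $\Phi_{\,6}\ =\ x^{\,2}\ -\ x\ +\ 1$ divides $G_{\,n}\,$. (You should also note the root is simple, since $G_{\,n}^{\,\prime}\,(\zeta)\ =\ 1\ +\ n\,\zeta^{\,n\,-\,1}$ cannot vanish when $\abs{\zeta}\ =\ 1\,$, so the cofactor indeed has degree $n\ -\ 2\,$.) The nonreciprocity of the cofactor is also fine. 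Note that the paper itself offers no proof of this statement --- it is quoted from \textsc{Selmer}'s 1956 paper --- so the only meaningful comparison is with \textsc{Selmer}'s original argument.

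The genuine gap is the irreducibility of the non-cyclotomic factor, which is the entire content of the theorem, and your proposal does not prove it. The step ``a careful modulus analysis of the roots \ldots then contradicts integrality of the coefficients of the factor not containing $\theta_{\,n}$'' is not an argument: roots clustering near the unit circle is perfectly compatible with integer factors (cyclotomic polynomials being the obvious example), and the constraint $\abs{A\,(0)}\ =\ \abs{B\,(0)}\ =\ 1$ alone rules out nothing. The alternative routes you name (resultants by residue class, Newton polygons at chosen primes) are not shown to work and are not how the result is actually obtained. \textsc{Selmer}'s method is different and quite specific: for each root $\alpha$ of the trinomial one uses the defining relation to express $\Re\bigl(\alpha\ -\ \alpha^{\,-1}\bigr)$ as an explicit function of $\abs{\alpha}^{\,2}$ alone, shows that this function is bounded below by a quantity that is negative but small (vanishing only near $\abs{\alpha}\ =\ 1$), and then observes that the sum of $\alpha\ -\ \alpha^{\,-1}$ over the roots of any putative integer factor is a rational integer determined by its first and last coefficients; comparing the forced integrality with the analytic bounds on the partial sums excludes every nontrivial splitting beyond the $\Phi_{\,6}$ factor. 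Without this (or some equally concrete) mechanism, your proposal establishes only the easy cyclotomic dichotomy, not the theorem.
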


\begin{theorem}[Verger-Gaugry \cite{Verger-Gaugry2016}]\label{thmthetan}
Let $n\ \geq\ 2\,$. The real root $\theta_{\,n}\ =\ {\rm D}\,(\theta_{\,n})\ +\ {\rm tl}\;(\theta_{\,n})\ \in\ (0,\,1)$ of the trinomial $G_{\,n}$ admits the following asymptotic expansion:
\begin{multline}\label{DbetaAsymptoticExpression}
  {\rm D}\,(\theta_{\,n})\ =\ 1\ -\ \frac{\lo n}{n}\times\\
  \times\left(1\ -\ \Bigl(\frac{n - \lo n}{n\,\lo n + n - \lo n}\Bigr)\Bigl(\lo \lo n - n\lo \Bigl(1 - \frac{\lo n}{n}\Bigr) - \lo n\Bigr)\right)\,,
\end{multline}
and
\begin{equation}\label{tailbetaAsymptoticExpression}
  {\rm tl}\,(\theta_n)\ =\ \frac{1}{n} \cdot \O\,\Bigl(\Bigl(\frac{\lo \lo n}{\lo n}\Bigr)^{\,2}\Bigr)\,,
\end{equation}
with the constant $\frac{1}{2}$ involved in $\O\,\left(\cdot\right)\,$.
\end{theorem}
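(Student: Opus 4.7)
My plan is to work directly with the defining equation $G_n(\theta_n)=0$, rewritten as
\begin{equation*}
(1-y_n)^n \ =\ y_n, \qquad y_n\ :=\ 1-\theta_n,
\end{equation*}
or equivalently, after taking logarithms,
\begin{equation*}
n\,\lo (1-y_n)\ =\ \lo y_n.
\end{equation*}
Since $G_n(0)=-1$ and $G_n(1)=1$, the root $\theta_n$ lies in $(0,1)$; moreover, the uniform bound $\theta_n \leq 1$ together with $\theta_n^n = 1-\theta_n$ forces $y_n\to 0^+$ as $n\to+\infty$. A crude Taylor bound $\lo(1-y_n) = -y_n + O(y_n^2)$ yields $n\,y_n \sim -\lo y_n$, and a standard bootstrap (substitute $\lo y_n = -\lo n + O(\lo\lo n)$) gives the leading-order estimate $y_n \sim (\lo n)/n$.

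Next I would refine this by substituting
\begin{equation*}
y_n\ =\ \frac{\lo n}{n}\,(1+\eta_n), \qquad \eta_n\ \longrightarrow\ 0,
\end{equation*}
back into $n\,\lo(1-y_n) = \lo y_n$. Expanding the left-hand side around $y=(\lo n)/n$ and the right-hand side around $\eta=0$, and keeping terms linear in $\eta_n$, one gets an affine equation of the shape
\begin{equation*}
n\,\lo\!\Bigl(1-\tfrac{\lo n}{n}\Bigr) \ -\ \frac{(\lo n)\,(n-\lo n)}{n-\lo n+n\lo n}\,(\lo n)\,\eta_n\ +\ R_n \ =\ \lo\!\tfrac{\lo n}{n} + \eta_n + O(\eta_n^2),
\end{equation*}
where $R_n$ is the Taylor remainder. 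Solving for $\eta_n$ produces precisely the rational coefficient $(n-\lo n)/(n\lo n + n-\lo n)$ multiplied by $\bigl(\lo\lo n - n\,\lo(1-\lo n/n)-\lo n\bigr)$; reinserting this into $y_n$ reconstructs the dominant part $\mathrm{D}(\theta_n)$ exactly as stated in \eqref{DbetaAsymptoticExpression}.

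To control the tail, I would estimate the remainder $R_n$ via the Lagrange form of Taylor's theorem applied to $\lo(1-y)$ and $\lo(1+\eta)$. Using the already-established bounds $|y_n|=O(\lo n/n)$ and $|\eta_n|=O(\lo\lo n/\lo n)$, the dominant quadratic contribution is $\tfrac12\,\eta_n^{\,2}\,\lo n$, which, after dividing by the linear coefficient of order $\lo n$ and multiplying by the prefactor $(\lo n)/n$, yields
\begin{equation*}
\mathrm{tl}(\theta_n)\ =\ \frac{1}{n}\cdot \O\!\Bigl(\bigl(\tfrac{\lo\lo n}{\lo n}\bigr)^{\,2}\Bigr),
\end{equation*}
the constant $\tfrac12$ being the Taylor coefficient of $-\tfrac12\eta^{\,2}$ in $\lo(1+\eta)$.

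The main obstacle is the linearisation step: one must obtain the \emph{exact} rational coefficient $(n-\lo n)/(n\lo n + n - \lo n)$, not merely its leading-order equivalent $1/\lo n$. This requires simultaneously linearising both sides of $n\,\lo(1-y_n)=\lo y_n$ around $y_n=(\lo n)/n$ and inverting the resulting combined resolvent in a single step, rather than treating the two sides independently. Once this algebraic bookkeeping is done correctly, tracking the constant $\tfrac12$ through the Lagrange remainder is routine, and uniform validity for all $n\geq 2$ is secured by the monotonicity of $y_n\mapsto n\,\lo(1-y_n)-\lo y_n$ on $(0,1)$.
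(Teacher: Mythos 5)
The paper contains no proof of this statement: Theorem~\ref{thmthetan} is quoted verbatim from \cite{Verger-Gaugry2016}, so there is nothing internal to compare against. Your proposal is nonetheless sound and follows essentially the route of the cited source: bootstrap the equation $\theta_n^{\,n}=1-\theta_n$, i.e.\ $n\,\lo(1-y_n)=\lo y_n$ with $y_n=1-\theta_n$, to get $y_n\sim \lo n/n$, then set $y_n=\frac{\lo n}{n}(1+\eta_n)$ and linearise both sides simultaneously; combining the linear coefficients $-\frac{n\lo n}{n-\lo n}$ (left) and $+1$ (right) does yield the resolvent $\frac{n\lo n+n-\lo n}{n-\lo n}$ and hence exactly the rational factor in \eqref{DbetaAsymptoticExpression}, while the dominant neglected term $-\frac{1}{2}\eta_n^{\,2}$ from $\lo(1+\eta_n)$, with $\eta_n=O(\lo\lo n/\lo n)$, propagates to $\frac{1}{n}\cdot\frac{1}{2}\bigl(\frac{\lo\lo n}{\lo n}\bigr)^{2}$ as in \eqref{tailbetaAsymptoticExpression}. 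The only blemish is your intermediate ``affine equation,'' whose displayed linear coefficient already contains the combined denominator $n-\lo n+n\lo n$ while the $+\eta_n$ term still sits on the right-hand side; this is internally inconsistent as written, but you correctly identify and resolve the issue in the following paragraph, and the final coefficients check out.
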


\begin{remark}
A simplified form of expression \eqref{DbetaAsymptoticExpression} is the following:
\begin{equation}\label{eq:remark1}
  {\rm D}\,(\theta_{\,n})\ =\ 1\ -\ \frac{1}{n}\;\Bigl(\lo n\ -\ \lo \lo n\ +\ \frac{\lo \lo n}{\lo n}\Bigr)\,.
\end{equation}
\end{remark}

By definition a {\em \textsc{Newman} polynomial} is an integer polynomial having all its coefficients in $\{\,0,\, 1\,\}\,$. A polynomial having its coefficients in $\{\,0,\, 1\,\}$ except its constant coefficient equal to $-1$ is called an {\em almost \textsc{Newman} polynomial}. It is not difficult to see that polynomials $f\ \in\ \B$ are almost \textsc{Newman} polynomials. The following irreducibility Conjecture (called \textsc{Odlyzko--Poonen} (OP)) holds for the asymptotics of the factorization of \textsc{Newman} polynomials.

\begin{conjecture}[\textsc{Odlyzko--Poonen} \cite{Odlyzko1993}]
Let $\P_{\,d,\,+}\ :=\ \{1\ +\ \sum_{\,j\,=\,1}^{\,d} a_{\,j}\,x^{\,j}\ \Bigl\vert\ a_{\,j}\ \in\ \{0,\, 1\}\,,\ a_{\,d}\ =\ 1\}$ denote the set of all \textsc{Newman} polynomials of degree $d\,$. We introduce also the class $\P_{\,+}\ :=\ \bigcup_{\,d\ \geq\ 1} \P_{\,d,\,+}\,$. Then, in $\P_{\,+}\,$, almost all polynomials are irreducible. More precisely, if $\Omega_{\,d}$ denotes the number of irreducible polynomials in
$\P_{\,d,\,+}\,$, then
\begin{equation*}
  \lim_{d\ \to\ \infty}\ \frac{\Omega_{\,d}}{2^{\,d\,-\,1}}\ =\ \lim_{d\ \to\ \infty}\ \frac{\#\{f\ \in\ \P_{\,d,\,+}\ \Bigl\vert\ f\ \mbox{is irreducible}\,\}}{2^{\,d\,-\,1}}\ =\ 1\,.
\end{equation*}
\end{conjecture}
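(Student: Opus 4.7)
The plan is to bound the number of reducible polynomials in $\P_{\,d,\,+}$ by partitioning them according to the degree of their smallest non-trivial irreducible factor, and then estimating each class separately. Write $R_{\,d}\ :=\ 2^{\,d\,-\,1}\ -\ \Omega_{\,d}$ for the number of reducible polynomials in $\P_{\,d,\,+}$, and, for each $k$ with $1\ \leq\ k\ \leq\ d/2$, let $R_{\,d,\,k}$ count those $f\ \in\ \P_{\,d,\,+}$ admitting a non-trivial irreducible factor $g\ \in\ \Z[\,x\,]$ of degree exactly $k\,$. Then $R_{\,d}\ \leq\ \sum_{\,k\,=\,1}^{\,\lfloor d/2 \rfloor}\,R_{\,d,\,k}\,$, and the goal is to show $R_{\,d}\ =\ o\,(2^{\,d\,-\,1})$.

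The first step is to dispose of the range of small $k\,$, where the candidate divisors $g$ are few and their multiples in $\P_{\,d,\,+}$ can be counted harmonically. Since $f\,(0)\ =\ 1$ for every $f\ \in\ \P_{\,d,\,+}\,$, any divisor $g$ must satisfy $g\,(0)\ =\ \pm\,1\,$; moreover, for each fixed $g\,$, the set $\{\,f\ \in\ \P_{\,d,\,+}\ \colon\ g\mid f\,\}$ lies in an affine sublattice of $\Z^{\,d\,+\,1}$ of codimension $\deg g\,$, and Fourier analysis over the zeros of $g$ (Ramanujan-type sums when $g$ is cyclotomic, and character/exponential sums over the complex roots otherwise) yields an upper bound of the form $C\,(g)\,2^{\,d\,-\,\deg g}$, where $C\,(g)$ depends on the separation of the roots of $g$ from the unit circle. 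Summing over all candidate $g$ of degree $\leq\ K$ for a slowly growing cutoff $K\ =\ K\,(d)\ \to\ \infty$ would give an $o\,(2^{\,d\,-\,1})$ contribution, provided the constants $C\,(g)$ are controlled uniformly; this regime recovers the known Konyagin-type statements that almost no Newman polynomial is divisible by a given cyclotomic polynomial.

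For the intermediate range of $k\,$, the strategy I would try is to combine Mahler-measure estimates with a cofactor count. Any factor $g$ of $f\ \in\ \P_{\,d,\,+}$ satisfies $M\,(g)\ \leq\ M\,(f)\ \leq\ \norm{f}_{\,2}\ \leq\ \sqrt{\,d\,+\,1\,}\,$, and Mignotte-- and Dubickas-type bounds count the monic integer polynomials of prescribed degree with Mahler measure below such a threshold. Combined with the constraint that the cofactor $h\ =\ f/g$ must itself have coefficients in $\{\,0,\,1\,\}$ except for $h\,(0)\ =\ \pm\,1\,$, together with coefficient convolution inequalities forcing most pairs $(g,\,h)$ to produce at least one coefficient outside $\{\,0,\,1\,\}\,$, this should allow one to bound $R_{\,d,\,k}$ by a quantity that is geometrically small in $d\,-\,k$ for $k$ not too close to $d/2\,$.

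The main obstacle, and the reason the conjecture has resisted proof since \cite{Odlyzko1993}, is precisely the balanced regime where $k$ is comparable to $d/2\,$: there the number of candidate factors $g$ of degree $k$ is already exponentially large in $d\,$, the number of candidate cofactors $h$ is exponentially large as well, and nothing in the union bound above rules out a positive density of pairs $(g,\,h)$ whose product happens to land in $\P_{\,d,\,+}\,$. Overcoming this appears to require a genuinely new input, such as either (i) a quantitative form of the Lehmer problem that forces at least one factor of any reducible $f\ \in\ \P_{\,d,\,+}$ to have Mahler measure bounded away from $1$ by a definite constant, coupled with a density statement showing that such products are rare in $\P_{\,d,\,+}\,$, or (ii) an equidistribution result for the coefficients of products of two random $\{\,0,\,1\,\}$-polynomials, showing that the product lies in $\P_{\,d,\,+}$ only with exponentially small probability in the total degree. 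Short of such an input, the proposed strategy yields only the weaker consequence that almost all $f\ \in\ \P_{\,d,\,+}$ have no irreducible factor of degree $\leq\ K\,(d)\,$, which is much weaker than the full \textsc{Odlyzko--Poonen} statement and is the reason why the present paper formulates an \emph{Asymptotic Reducibility Conjecture} only for the much sparser class $\B\,$.
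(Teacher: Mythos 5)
This statement is the \textsc{Odlyzko--Poonen} Conjecture: it is an open problem, and the paper offers no proof of it. The paper only records the partial result of \textsc{Konyagin}, namely $\Omega_{\,d}\ \gg\ 2^{\,d}/\lo d\,$, together with numerical \textsc{Monte-Carlo} evidence (Table~\ref{tableIRRED}), so there is no proof in the paper against which to compare your argument. Your write-up is therefore best read as a survey of a plausible attack rather than as a proof, and to your credit you say so explicitly in the final paragraph.

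As a proof, the proposal has a genuine and decisive gap, which you yourself identify: the union bound over pairs $(g,\,h)$ with $\deg g$ comparable to $d/2$ is hopeless, because both the number of candidate factors and the number of candidate cofactors are exponential in $d\,$, and nothing in the Mahler-measure or coefficient-convolution estimates you invoke excludes a positive proportion of products landing in $\P_{\,d,\,+}\,$. Even in the small-$k$ regime your bound $C\,(g)\cdot 2^{\,d\,-\,\deg g}$ is only asserted, not established: the constant $C\,(g)$ would need uniform control as $g$ ranges over all irreducible polynomials of degree up to $K\,(d)\,$, including cyclotomic factors whose roots lie exactly on the unit circle, and you do not supply that uniformity. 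What your strategy can plausibly deliver is the much weaker statement that almost all \textsc{Newman} polynomials have no irreducible factor of small degree, which is essentially the content of the known partial results. Since the full limit $\Omega_{\,d}/2^{\,d\,-\,1}\ \to\ 1$ remains conjectural, the correct conclusion is that the statement is not proved --- neither by you nor by the paper --- and your text should not be presented as a proof of it.
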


The best account of the Conjecture is given by \textsc{Konyagin} \cite{Konyagin1999}:
\begin{equation*}
  \Omega_{\,d}\ \gg\ \frac{2^{\,d}}{\lo d}\,.
\end{equation*}
Replacing the constant coefficients $1$ by $-1$ gives the variant Conjecture (called ``\emph{variant OP}'') for almost \textsc{Newman} polynomials.

\begin{conjecture}(Variant OP)\label{odlyzkopoonenCJ}
Let $\P_{\,d,\,-}\ :=\ \{-1\ +\ \sum_{\,j\,=\,1}^{\,d}\,a_{\,j}\,x^{\,j}\ \Bigl\vert\ a_{\,j}\ =\ 0\ \mbox{or}\ 1\,,\newline a_{\,d}\ =\ 1\}$ denote the set of all almost \textsc{Newman} polynomials of degree $d\,$. Denote
\begin{equation*}
  \P_{\,-}\ =\ \bigcup_{\,d\ \geq\ 1} \P_{\,d,\,-}\,.
\end{equation*}
Then, in $\P_{\,-}\,$, almost all polynomials are irreducible. More precisely, 
\begin{equation*}
  \lim_{d\ \to\ \infty} \frac{\#\{f\ \in\ \P_{\,d,\,-}\ \Bigl\vert\ f \mbox{ is irreducible}\}}{2^{\,d\,-\,1}}\ =\ 1\,.
\end{equation*}
\end{conjecture}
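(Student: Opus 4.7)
The plan is to reduce the statement to three separate counting bounds. Write $R_d := \#\{f \in \P_{\,d,\,-}\ :\ f \mbox{ is reducible}\}$ and decompose by the nature of a chosen proper irreducible factor $g$ of $f$, giving $R_d\ \leq\ R_d^{\mathrm{cyc}}\ +\ R_d^{\mathrm{rec}}\ +\ R_d^{\mathrm{nrec}}$ corresponding to (i) $g$ cyclotomic, (ii) $g$ non-cyclotomic reciprocal, and (iii) $g$ non-reciprocal. The target $R_d/2^{\,d-1}\ \to\ 0$ follows from $R_d^{\bullet}\ =\ o\,(2^{\,d})$ in each regime separately.

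For the cyclotomic contribution, the divisibility $\Phi_{\,n}\,\mid\,f$ forces $\varphi\,(n)\ \leq\ d$, so only $O\,(d\lo\lo d)$ values of $n$ contribute. For each such $n$, the relation $\Phi_{\,n}\,\mid\,f$ is equivalent to $\sum_{\,j=1}^{\,d}\,a_{\,j}\,\zeta_{\,n}^{\,j}\ =\ 1$ for a primitive $n$-th root of unity $\zeta_{\,n}$, a $\varphi\,(n)$-dimensional real linear constraint on the Boolean cube $\{0,\,1\}^{\,d}$. A Fourier argument on $\{0,\,1\}^{\,d}$ (in the spirit of \textsc{Konyagin}'s transfer-matrix method) gives $\#\{f\ \in\ \P_{\,d,\,-}\ :\ \Phi_{\,n}\,\mid\,f\}\ \leq\ C\cdot 2^{\,d-c\,\varphi(n)}$ for effective $c,\,C\ >\ 0$ once $\varphi\,(n)$ is not too small; the small-$n$ exceptional cases $n\ \in\ \{1,\,2,\,3,\,4,\,6\}$ reduce to congruences on $f\,(1)$ and $f\,(-1)$, satisfied by only an $O\,(1/\sqrt{d})$-fraction of $\P_{\,d,\,-}$ by a local central limit estimate. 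Summing the geometric series over $n$ then yields $R_d^{\mathrm{cyc}}\ =\ o\,(2^{\,d})$.

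For $R_d^{\mathrm{rec}}$ and $R_d^{\mathrm{nrec}}$ I would split by $\deg g\,$. In the bounded range $\deg g\ \leq\ K\,(d)$ for a slowly growing $K\,(d)$, one enumerates the $O\,(3^{\,K})$ candidate integer factors $g$ with $\abs{g\,(0)}\ =\ 1$; for each $g\ =\ \prod\,(x-\alpha_{\,i})$, the relation $g\,\mid\,f$ forces $\sum_{\,j}\,a_{\,j}\,\alpha_{\,i}^{\,j}\ =\ 1$ at every root, and a character-sum argument again provides an exponential gain unless some $\alpha_{\,i}$ is a low-order root of unity (already in case (i)). In the growing range $K\,(d)\ <\ \deg g\ \leq\ d/2$, one adapts the sieve, resultant, and Mahler-measure estimates of \textsc{Konyagin}, \textsc{Filaseta}, and \textsc{Borwein--Erd\'{e}lyi--K\'{o}s} developed for $\P_{\,d,\,+}$: since the coefficient moduli in $\P_{\,d,\,-}$ are identical to those in $\P_{\,d,\,+}\,$, the bound $\mathrm{M}\,(f)\ \leq\ \sqrt{d+1}$ and the known concentration of zeros near $\abs{z}\ =\ 1$ transfer verbatim, and so does the resulting count.

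The main obstacle is the growing-degree regime of case (iii), where $g$ is non-reciprocal of degree comparable to $d\,$. This is precisely the point at which the current state of the art on the original \textsc{Odlyzko--Poonen} Conjecture stalls: \textsc{Konyagin}'s $\Omega_{\,d}\ \gg\ 2^{\,d}/\lo d$ does not reach density $1\,$, and any density-$1$ proof, for OP or its variant, must control the distribution of zeros of a typical $f\ \in\ \P_{\,d,\,-}$ in a thin annulus around $\abs{z}\ =\ 1$ tightly enough to forbid a large irreducible common factor with coefficients in $\{-1,\,0,\,1\}\,$. The lenticular root theorem established earlier in this paper supplies exactly such control inside a small angular sector, but only on the sparse subclass $\B\ \subset\ \P_{\,-}\,$; promoting it to an effective \textsc{Erd\H{o}s--Tur\'{a}n}-type equidistribution estimate valid for almost every $f\ \in\ \P_{\,d,\,-}\,$, with an explicit rate, is the step I expect to require genuinely new analytic input beyond the methods of this paper.
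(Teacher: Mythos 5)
The statement you were given is not a theorem of the paper: it is stated there as a conjecture (the ``variant OP'' Conjecture), and the only support the paper offers is empirical --- the Monte-Carlo estimates of Section~\ref{S6} and Table~\ref{tableIRRED}, which report a proportion $\approx 0.968$ of irreducible polynomials in $\P_{\,d,\,-}$ for degrees up to $3\,000$. There is therefore no proof in the paper to compare yours against, and your proposal does not supply one either. By your own account, the decomposition $R_{\,d}\ \leq\ R_{\,d}^{\mathrm{cyc}}\ +\ R_{\,d}^{\mathrm{rec}}\ +\ R_{\,d}^{\mathrm{nrec}}$ leaves the dominant regime --- a nonreciprocal irreducible factor of degree comparable to $d$ --- uncontrolled, and this is precisely where the state of the art on the original \textsc{Odlyzko--Poonen} Conjecture stops: \textsc{Konyagin}'s bound $\Omega_{\,d}\ \gg\ 2^{\,d}/\lo d$ does not reach density one, and no argument in the present paper (whose structural results concern only the sparse subclass $\B\ \subset\ \P_{\,-}$, via \textsc{Ljunggren}'s lemma and the lenticular geometry) closes that gap. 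What you have written is a reasonable research programme, not a proof, and it should be presented as such.

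Two smaller points on the parts you do sketch. First, the cyclotomic count: the equivalence $\Phi_{\,n}\,\mid\,f\ \Longleftrightarrow\ \sum_{\,j}\,a_{\,j}\,\zeta_{\,n}^{\,j}\ =\ 1$ is correct, and for prime $n\ =\ p$ it reduces (as in Lemma~\ref{boydlemma} of the paper) to the equality of the residue-class sums $c_{\,0}\ =\ \ldots\ =\ c_{\,p\,-\,1}$; but the count of Boolean vectors satisfying this is of local-central-limit type, roughly $2^{\,d}\cdot(d/p)^{\,-\,(p\,-\,1)/2}$, not of the form $C\cdot 2^{\,d\,-\,c\,\varphi(n)}$ with a fixed $c\ >\ 0$ as you assert --- the conclusion $R_{\,d}^{\mathrm{cyc}}\ =\ o\,(2^{\,d})$ survives after summing over $n$ with $\varphi\,(n)\ \leq\ d$, but the intermediate bound as stated is wrong in form and the claimed geometric summation does not apply. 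Second, the transfer from $\P_{\,d,\,+}$ to $\P_{\,d,\,-}$ is not entirely ``verbatim'': Mahler-measure and zero-concentration bounds depend only on coefficient moduli, but the sign of the constant term changes which cyclotomic and reciprocal factors can occur (for instance $f\,(1)\ =\ -\,1\ +\ \#\{\,j\ :\ a_{\,j}\ =\ 1\,\}$ now vanishes only for the single polynomial $-\,1\ +\ x^{\,d}$), so the small-degree bookkeeping must be redone rather than cited.
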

There is a numerical evidence that the OP Conjecture and the variant OP Conjecture are true (\cf Table~\ref{tableIRRED}, Sect.~\textsection\ref{S6}).

The objectives of this note consist in
\begin{enumerate}
  \item Establishing the type of factorization of the polynomials $f$ of the class $\B$ (Theorem~\ref{thm1factorization}), in the context of \textsc{Schinzel}'\,s and \textsc{Filaseta}'\,s theorems on the factorization of lacunary polynomials \cite{Schinzel1969, Schinzel1978, Filaseta1999}.
  \item Characterizing the geometry of the zeroes of the polynomials $f$ of the class $\B\,$, in particular in proving the existence of lenticuli of zeroes in the angular sector $-\pi/18\ \leq\ \arg\,z\ \leq\ \pi/18$ inside the open unit disk in \textsc{Solomyak}'\,s fractal (with numerical examples to illustrate Theorem~\ref{thm2lenticuli}).
  \item Estimating the probability for a polynomial $f\ \in\ \B$ to be irreducible (Heuristics called ``\emph{Asymptotic Reducibility Conjecture}'') by comparison with the variant OP Conjecture.
\end{enumerate}

\bigskip
\paragraph{Notations used in the sequel.} If $P\,(x)\ =\ \sum_{\,j\,=\,0}^{\,r}\, a_{\,j}\,x^{\,r}\ \in\ \Z\,[x]\,$, we refer to the reciprocal polynomial of $P\,(x)$ as $P^{\,*}\,(x)\ =\ \sum_{\,j\,=\,0}^{\,r}\,a_{\,r\,-\,j}\,x^{\,r}\,$. The \textsc{Euclidean} norm $\norm{P}$ of $P\,(x)\ =\ \sum_{\,j\,=\,0}^{\,r}\,a_{\,j}\, x^{\,r}\ \in\ \Z\,[x]$ is $\norm{P}\ :=\ \Bigl(\,\sum_{\,j\,=\,0}^{\,r}\,a_{\,j}^{\,2}\,\Bigr)^{\,\frac{1}{2}}\,$. If $\alpha$ is an algebraic number, $P_{\,\alpha}\,(x)$ denotes its minimal polynomial; if $P_{\,\alpha}\,(x)$ is reciprocal we say that $\alpha$ is reciprocal. A \textsc{Perron} number $\alpha$ is either $1$ or a real algebraic integer $>\ 1$ such that its conjugates $\alpha^{\,(i)}$ are strictly less than $\alpha$ in modulus. The integer $n$ is called the \emph{dynamical degree} of the real algebraic integer $\beta\ >\ 1$ if $1/\beta$ denotes the unique real zero of $f\,(x)\ =\ -\,1\ +\ x +\ x^{\,n}\ +\ \sum_{\,q\,=\,1}^{\,s}\,x^{\,m_{\,q}}\ \in\ \B\,$. Let $\T$ denote the unit circle in the complex plane.

\begin{theorem}\label{thm1factorization}
For any $f\ \in\ \B_{\,n}\,$, $n\ \geq\ 3\,$, denote by
\begin{equation*}
  f\,(x)\ =\ A\,(x)\cdot B\,(x)\cdot C\,(x)\ =\ -1\ +\ x\ +\ x^{\,n}\ +\ x^{\,m_{\,1}}\ +\ x^{\,m_{\,2}}\ +\ \ldots\ +\ x^{\,m_{\,s}}\,,
\end{equation*}
where $s\ \geq\ 1\,$, $m_{\,1}\ -\ n\ \geq\ n\ -\ 1\,$, $m_{\,j\,+\,1}\ -\ m_{\,j}\ \geq\ n\ -\ 1$ for $1\ \leq\ j\ <\ s\,$, the factorization of $f$ where $A$ is the cyclotomic part, $B$ the reciprocal noncyclotomic part, $C$ the nonreciprocal part. Then, 
\begin{enumerate}
  \item the nonreciprocal part $C$ is nontrivial, irreducible and never vanishes on the unit circle,
  \item if $\beta\ >\ 1$ denotes the real algebraic integer uniquely determined by the sequence $(n,\, m_{\,1},\, m_{\,2},\,\ldots,\,m_{\,s})$ such that $1/\beta$ is the unique real root of $f$ in $(\theta_{\,n\,-\,1},\,\theta_{\,n})\,$, the nonreciprocal polynomial $-\,C^{\,*}\,(x)$ of $C\,(x)$ is the minimal polynomial of $\beta\,$, and $\beta$ is a nonreciprocal algebraic integer.
\end{enumerate}
\end{theorem}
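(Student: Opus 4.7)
The plan is to proceed in four stages: (i) locate the unique positive real root $\gamma = 1/\beta$ of $f$ in $(\theta_{n-1},\theta_n)$; (ii) use $\gamma$ to force $C$ non-trivial and identify $\gamma$ as a root of $C$; (iii) establish the irreducibility of $C$ by invoking factorization theorems for lacunary polynomials; and (iv) deduce the monicness of $-C^*$, the minimal-polynomial statement, and the absence of zeroes of $C$ on $\T$.

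For stage (i), since $f'(x)>0$ for $x>0$, $f$ has at most one positive real zero. From $G_n(\theta_n)=0$ one gets $f(\theta_n)=\sum_{q=1}^s\theta_n^{m_q}>0$, and from $\theta_{n-1}^{\,n-1}=1-\theta_{n-1}$ one rewrites
\[
f(\theta_{n-1})=-(1-\theta_{n-1})^2+\sum_{q=1}^s\theta_{n-1}^{\,m_q}.
\]
The gap conditions $m_1\geq 2n-1$ and $m_{q+1}-m_q\geq n-1$ give $m_q\geq(2n-1)+(q-1)(n-1)$, whence the geometric bound
\[
\sum_{q=1}^s\theta_{n-1}^{\,m_q}<\frac{\theta_{n-1}^{\,2n-1}}{1-\theta_{n-1}^{\,n-1}}=\theta_{n-1}^{\,2n-2}=(1-\theta_{n-1})^2
\]
yields $f(\theta_{n-1})<0$. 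The intermediate value theorem then furnishes the unique positive real root $\gamma=1/\beta\in(\theta_{n-1},\theta_n)$, with $\beta>1$ an algebraic integer. For stage (ii), $\gamma$ cannot be a root of $A$ because all roots of $A$ lie on $\T$ and $|\gamma|<1$; nor of $B$, because reciprocity of $B$ would force $\beta=1/\gamma$ to be a root of $B$ and hence of $f$, a second positive real zero, contradicting uniqueness. Hence $C(\gamma)=0$ and $C^*(\beta)=0$, so $C$ is non-constant.

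Stage (iii) is the heart of the proof and the main obstacle. I would apply the factorization theorems of \textsc{Schinzel} \cite{Schinzel1969,Schinzel1978} and \textsc{Filaseta} \cite{Filaseta1999} for lacunary integer polynomials. The polynomial $f$ is viewed as the \textsc{Selmer} seed $G_n=-1+x+x^n$ perturbed by the sparse monomial tail $\sum_{q}x^{m_q}$, and the gap condition $m_{q+1}-m_q\geq n-1$ is large enough to prevent any new reducibility of the non-reciprocal part beyond that already present for $G_n$. A non-trivial factorization $C=C_1C_2$ would have to be compatible with the Newton polygon of $f$ and with the sparse exponent sequence $(0,1,n,m_1,\ldots,m_s)$; propagating \textsc{Selmer}'s trinomial argument across the added monomials via the gap inequalities would produce either a reciprocal factor, contradicting the definition of $C$, or an incompatible degree constraint. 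Making this rigorous, by tracking the admissible support of $C_1,C_2$ through the sparse structure, is the bulk of the work.

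Finally, for stage (iv): $\Phi_1\nmid f$ since $f(1)=s+1>0$, which rules out $\Phi_1$ from $A$ and gives $A(0)=1$. If the monic reciprocal part $B$ were anti-palindromic ($B^*=-B$), evaluating at $1$ would yield $B(1)=0$, so $\Phi_1\mid B$, contradicting $B$ non-cyclotomic; hence $B^*=B$ and $B(0)=1$. Then $f(0)=A(0)B(0)C(0)=-1$ forces $C(0)=-1$, so $-C^*$ is monic. Combined with the irreducibility of $C$ (hence of $-C^*$) and the fact that $-C^*(\beta)=0$, we conclude that $-C^*$ is the minimal polynomial of $\beta$, which is therefore a non-reciprocal algebraic integer. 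Finally, if $\alpha\in\T$ were a root of the irreducible non-reciprocal $C$, then $\bar\alpha=1/\alpha$ would also be a root, forcing $C^*=\pm C$ by irreducibility of $C$, contradicting non-reciprocity.
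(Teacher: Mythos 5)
Your stages (i), (ii) and (iv) are sound, and stage (i) is actually a nice elementary alternative to the paper's route: the paper localizes $1/\beta$ in $(\theta_{n-1},\theta_n)$ via R\'enyi $\beta$-expansions of $1$ and Lemma~5.1(ii) of Flatto--Lagarias--Poonen, whereas your direct computation $f(\theta_{n-1})=-(1-\theta_{n-1})^{2}+\sum_q\theta_{n-1}^{m_q}<0$ (using $\theta_{n-1}^{\,n-1}=1-\theta_{n-1}$ and the gap conditions to bound the tail by the geometric series $\theta_{n-1}^{\,2n-1}/(1-\theta_{n-1}^{\,n-1})=(1-\theta_{n-1})^{2}$) together with $f(\theta_n)>0$ and $f'>0$ on $(0,\infty)$ achieves the same localization with no outside machinery. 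Your stage (ii) is essentially the paper's Proposition~\ref{descartesrule}, and your monicness bookkeeping in stage (iv) is more explicit than what the paper writes down.

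The genuine gap is stage (iii), which is the central claim of the theorem, and your text explicitly concedes it is only a plan. The Schinzel and Filaseta theorems you invoke control the \emph{number} of irreducible (noncyclotomic) factors of lacunary polynomials asymptotically; they do not yield irreducibility of the nonreciprocal part of a specific $f$, and no Newton-polygon or ``propagate Selmer's argument across the gaps'' scheme is known to do so here. The paper's actual mechanism is entirely different and quite short: Ljunggren's criterion (Lemma~\ref{reducibi}) says the nonreciprocal part of $f$ is reducible iff there exists $w\neq\pm f,\pm f^{*}$ with $w\,w^{*}=f\,f^{*}$. Writing $w=\sum_{j=0}^{q}b_j x^{k_j}$, comparing the middle coefficient of $w\,w^{*}$ and $f\,f^{*}$ gives $\norm{w}^{2}=\norm{f}^{2}=s+3$, the constant term gives $b_0b_q=-1$, and evaluating at $x=1$ gives $\bigl(\sum_{j=1}^{q-1}b_j\bigr)^{2}=(r-1)^{2}$ while $\sum_{j=1}^{q-1}b_j^{2}=r-1$; the chain $r-1=\abs{\sum b_j}\leq\sum\abs{b_j}\leq\sum b_j^{2}=r-1$ then forces all $\abs{b_j}=1$ and $q=r$, i.e. $w=\pm f$ or $\pm f^{*}$. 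Without this (or an equivalent) argument your proof does not establish irreducibility of $C$. A minor further point: in your last step, irreducibility plus a root on $\T$ forces $C^{*}=\lambda C$ with $\lambda=\pm1$; the case $C^{*}=-C$ does not contradict nonreciprocity directly, but it gives $C(1)=-C(1)=0$, which contradicts $f(1)=s+1\neq0$ --- you should add that one line, as the paper does in Proposition~\ref{neverunitcircle}.
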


\begin{remark}
For all polynomials $f\,$, as described in Theorem~\ref{thm1factorization}, we observe numerically the following lower bound on the degree of the nonreciprocal part $C\,$:
\begin{equation*}
    \deg\,(C)\ \geq\ \lfloor \frac{m_{\,s}\ -\ 1}{2}\rfloor\,,
\end{equation*}
At the current stage this minoration is a conjecture.
\end{remark}

Let us now define the lenticular roots of an $f$ of the class $\B\,$. In the case $s\ =\ 0\,$, \ie for the trinomials $G_{\,n}\,(x)\,$, from \cite[Proposition~3.7]{Verger-Gaugry2008}, the roots of modulus $<\ 1$ of $G_{\,n}$ all lie in the angular sector $-\,\pi/3\ <\ \arg\,z\ <\ \pi/3\,$. The set of these ``internal'' roots has the form of a {\em lenticulus}, justifying the terminology (Figure~\ref{example649}(\textit{a}) for $n\ =\ 37$); they are called {\em lenticular roots}. For extending the notion of ``lenticulus of roots'' to general polynomials $f$ of the class $\B\,$, with $s\ \geq\ 1\,$, we view
\begin{multline*}
  f\,(x)\ =\ -1\ +\ x\ +\ x^{\,n}\ +\ x^{\,m_{\,1}}\ +\ x^{\,m_{\,2}}\ +\ \ldots\ +\ x^{\,m_{\,s}}\ =\\ 
  G_{\,n}\,(x)\ +\ x^{\,m_{\,1}}\ +\ x^{\,m_{\,2}}\ +\ \ldots\ +\ x^{\,m_{\,s}}\,,
\end{multline*}
(where $n\ \geq\ 3\,$, $s\ \geq\ 1\,$, $m_{\,1}\ -\ n\ \geq\ n\ -\ 1\,$, $m_{\,j\,+\,1}\ -\ m_{\,j}\ \geq\ n\ -\ 1$ for $1\ \leq\ j\ <\ s$) as a perturbation of $G_{\,n}\,(x)$ by $x^{\,m_{\,1}}\ +\ x^{\,m_{\,2}}\ +\ \ldots\ +\ x^{\,m_{\,s}}\,$. The lenticulus of roots of $f$ is then a deformation of the lenticulus of roots of $G_{\,n}$ (Figure~\ref{example649}(\textit{b})). In this deformation process, the aisles of the lenticulus may present important displacements, in particular towards the unit circle, whereas the central part remains approximately identical. Therefore it is hopeless to define the lenticulus of roots of $f$ in the full angular sector $-\,\pi/3\ <\ \arg\,\omega\ <\ \pi/3\,$. From the structure of the asymptotic expansions of the roots of $G_{\,n}\,(x)$ \cite{Verger-Gaugry2016} it is natural to restrict to the angular sector to $-\,\pi/18\ <\ \arg\,\omega\ <\ \pi/18\,$. More precisely,
\begin{theorem}[\cite{Verger-Gaugry2018}]\label{thm2lenticuli}
Let $n\ \geq\ 260\,$. There exist two positive constants $c_{\,n}$ and $c_{\,A,\,n}\,$, $c_{\,A,\,n}\ <\ c_{\,n}\,$, such that the roots of $f\ \in\ \B_{\,n}\,$,
\begin{equation*}
  f\,(x)\ =\ -1\ +\ x\ +\ x^{\,n}\ +\ x^{\,m_{\,1}}\ +\ x^{\,m_{\,2}}\ +\ \ldots\ +\ x^{\,m_{\,s}}\,,
\end{equation*}
where $s\ \geq\ 1\,$, $m_{\,1}\ -\ n\ \geq\ n\ -\ 1\,$, $m_{\,j\,+\,1}\ -\ m_{\,j}\ \geq\ n\ -\ 1$ for $1\ \leq\ j\ <\ s\,$, lying in $-\,\pi/18\ <\ \arg\,z\ <\ \pi/18$ either belong to
\begin{equation*}
  \Bigl\{z\ \Bigl\vert\ \abs{\abs{z}\ -\ 1}\ <\ \frac{c_{\,A,\,n}}{n}\Bigr\}\,, \quad \mbox{ or to } \quad \Bigl\{z\ \Bigl\vert\ \abs{\abs{z}\ -\ 1}\ \geq\ \frac{c_{\,n}}{n}\Bigr\}\,.
\end{equation*}
\end{theorem}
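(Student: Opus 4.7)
The plan is to treat $f(x) = G_n(x) + (x^{m_1} + x^{m_2} + \ldots + x^{m_s})$ as a controlled perturbation of the trinomial $G_n$ in the narrow sector $S := \{z : -\pi/18 < \arg z < \pi/18\}$, and to extract the dichotomy from a Rouché-type argument on two well-chosen families of contours. First I would recall the precise location of the lenticular zeros of $G_n$ from \cite{Verger-Gaugry2016}: by asymptotic expansions analogous to \eqref{DbetaAsymptoticExpression}--\eqref{tailbetaAsymptoticExpression}, each lenticular zero $\omega_{n,k}$ of $G_n$ inside the open unit disk and in the sector $-\pi/3 < \arg z < \pi/3$ satisfies $|\,|\omega_{n,k}|-1\,| \asymp (\log n)/n$, with an explicit main term. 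In particular, for $n\ge 260$ and $\omega_{n,k}\in S$, one can isolate each $\omega_{n,k}$ inside a disk $D(\omega_{n,k},\rho_n)$ with $\rho_n$ of order $1/n$ on which $|G_n(z)|$ admits a uniform lower bound of the form $C\,(\log n)/n$.

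The second step is to estimate the perturbation $P(z):=x^{m_1}+\ldots+x^{m_s}$ in the sector $S$. Because of the gap conditions $m_1-n\geq n-1$ and $m_{q+1}-m_q\geq n-1$, the exponents $m_q$ grow at least geometrically in the index $q$, but more importantly on any circle $|z|=r$ with $r\le 1$ each term is bounded in modulus by $r^{m_q}$. Splitting $S$ into two regions according to whether $||z|-1|$ is small or not, one gets: on any circle $|z|=1-t/n$ with $t$ of order $1$, $|P(z)|$ is exponentially small (at least $O(e^{-c t})$), whereas on circles very close to $\T$ the function $P(z)$ may be large but $G_n(z)$ is comparable in size. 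Combining these estimates with the lower bound on $|G_n|$ on the circles $\partial D(\omega_{n,k},\rho_n)$ yields the Rouché inequality $|P(z)| < |G_n(z)|$ on each such circle, so each lenticular zero $\omega_{n,k}$ of $G_n$ in $S$ produces exactly one zero of $f$ in $D(\omega_{n,k},\rho_n)$, and these zeros satisfy $||z|-1|\ge c_n/n$ with $c_n$ chosen smaller than the main constant in the asymptotics of $1-|\omega_{n,k}|$.

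For the complementary statement — namely, that any other root of $f$ in $S$ must lie within $c_{A,n}/n$ of $\T$ — I would argue by exclusion: on any annulus $\{c_{A,n}/n \le ||z|-1| \le c_n/n\}\cap S$ that lies strictly outside all disks $D(\omega_{n,k},\rho_n)$, the same type of Rouché bound shows $|f(z)|\ge |G_n(z)| - |P(z)| > 0$, since on such an annulus both $|G_n(z)|$ admits a uniform lower bound and $|P(z)|$ remains exponentially small relative to it. Choosing $c_{A,n}$ small enough so that the annulus is nonempty and disjoint from the Rouché disks around the $\omega_{n,k}$ gives the ``forbidden zone''. Any remaining zeros of $f$ in $S$ therefore sit in the inner annulus $\{||z|-1| < c_{A,n}/n\}$, as claimed.

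The main obstacle will be the uniform quantitative control of $|G_n(z)|$ and $|P(z)|$ on the narrow sector $S$ in a way that is simultaneously (i) strong enough to separate the shifted lenticular zeros from the near-$\T$ zeros by a true gap and (ii) robust over all admissible exponent sequences $(m_1,\ldots,m_s)$. The choice of the opening angle $\pi/18$ is dictated precisely by this separation: it is the largest aperture for which the asymptotic expansion of the lenticular zeros of $G_n$ from \cite{Verger-Gaugry2016} remains in the regime where the main term dominates the error, and for which the geometric-series bound on $P$ survives with a constant independent of $s$. Making the constants $c_n$ and $c_{A,n}$ explicit and checking that the threshold $n\ge 260$ is indeed what the argument forces — rather than a larger bound — is the delicate quantitative part of the proof.
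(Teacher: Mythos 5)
First, a point of reference: this paper does not prove Theorem~\ref{thm2lenticuli} at all --- it imports it from \cite{Verger-Gaugry2018} and only illustrates it on examples (Section~\ref{S3}), stating explicitly that the proof ``requires the structure of the asymptotic expansions of the roots of $G_{\,n}$'' and is given elsewhere. So your proposal can only be measured against the strategy of the cited reference, and in outline you have reconstructed it correctly: treat $f = G_{\,n} + P$ with $P(z) = z^{\,m_{\,1}} + \cdots + z^{\,m_{\,s}}$, use the gap conditions $m_{\,1} - n \geq n-1$, $m_{\,q+1} - m_{\,q} \geq n-1$ to get $\abs{P(z)} \leq a^{\,2}/(1-a)$ with $a = \abs{z}^{\,n-1}$ (note: the exponents grow arithmetically, not ``geometrically''; what is geometric is the decay of $\abs{z}^{\,m_{\,q}}$), and run Rouch\'e on small circles around the lenticular zeros of $G_{\,n}$. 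The appearance of $\kappa$ and of $c_{\,n} \simeq -\lo\kappa$ in the paper is exactly the trace of this competition.

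The genuine gap is in your ``complementary statement''. You assert that on the annulus $\{\,c_{\,A,\,n}/n \leq \abs{\abs{z}-1} \leq c_{\,n}/n\,\}$ intersected with the sector, $\abs{G_{\,n}(z)}$ admits a uniform lower bound while $\abs{P(z)}$ is exponentially small relative to it; both halves fail as stated. (i) $G_{\,n}$ has its own lenticular zeros near the rays $\arg z = \pm\pi/18$, at distance roughly $\tfrac{1}{n}\,\lo\bigl(1/(2\sin(\pi/36))\bigr) \approx 1.7467/n$ from the unit circle, i.e.\ strictly inside your annulus since $c_{\,n} \approx -\lo\kappa \approx 1.7627$; hence $\abs{G_{\,n}}$ vanishes there and no blanket triangle inequality closes the argument. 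The coupling between the aperture $\pi/18$, the constant $\kappa$, and the zeros of $G_{\,n}$ nearest the sector boundary is the delicate heart of the real proof, not a checkable afterthought. (ii) On the inner edge of the annulus the bound $a^{\,2}/(1-a)$ blows up as $\abs{z} \to 1^{-}$ (indeed $\abs{P}$ can be as large as $s$, which is unbounded over $\B_{\,n}$), so ``exponentially small'' is false there and $c_{\,A,\,n}$ is not a free small parameter: it is pinned down by the inequality $\abs{G_{\,n}(z)} > a^{\,2}/(1-a)$, whose optimization over the annulus radius is precisely what produces the function $y \mapsto (1-\exp(-\pi/y))/(2\exp(\pi/y)-1)$ and its maximum $\kappa$. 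Finally, the statement is about $\abs{\abs{z}-1}$, so the forbidden annulus has a component \emph{outside} the unit circle; your estimates only decay for $\abs{z}<1$, and the exterior component needs a separate treatment (e.g.\ via $f^{\,*}$ and $z \mapsto 1/z$, as the paper itself does in the proof of Proposition~\ref{noncycloreciprPart}).
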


The {\em lenticulus of zeroes of $f$} is then defined as
\begin{equation*}
  \L_{\,\beta}\ :=\ \Bigl\{\,\omega\ \Bigl\vert\ \abs{\omega}\ <\ 1\,,\ -\frac{\pi}{18}\ <\ \arg\,\omega\ <\ \frac{\pi}{18}\,,\ \abs{\abs{\omega}\ -\ 1}\ \geq\ \frac{c_{\,n}}{n}\,\Bigr\}\,,
\end{equation*}
where $1/\beta$ is the positive real zero of $f\,$. The proof of Theorem~\ref{thm2lenticuli} requires the structure of the asymptotic expansions of the roots 
of $G_{\,n}$ and is given in \cite{Verger-Gaugry2018}.

A typical example of lenticularity of roots with $n\ =\ 481$ is given in Figure~\ref{ltlent481}, in which
\begin{equation*}
  f\,(x)\ =\ -1\ +\ x\ +\ x^{\,481}\ +\ x^{\,985}\ +\ x^{\,1502}\,.
\end{equation*}

Let $\kappa\ =\ 0.171573\ldots$ be the maximum of the function
\begin{equation*}
  y\ \longmapsto\ \frac{1\ -\ \exp\Bigl(-\;\dfrac{\pi}{y}\Bigr)}{2\,\exp\Bigl(\dfrac{\pi}{y}\Bigr)\ -\ 1}
\end{equation*}
on $(0,\,+\,\infty)\,$. The following formulation for $c_{\,n}$ is given in \cite{Verger-Gaugry2018}:
\begin{equation*}
  c_{\,n}\ =\ -\,\Bigl(\,1\ +\ \frac{1}{n}\,\Bigr)\cdot\lo\kappa\ +\ \frac{1}{n}\cdot\O\,\biggl(\,\Bigl(\frac{\lo \lo n}{\lo n}\Bigr)^{\,2}\,\biggr)\,,
\end{equation*}
with $c_{\,n}\ \simeq\ -\lo\kappa\ \approx\ 1.76274\ldots$ to the first-order. In the present note Theorem~\ref{thm2lenticuli} is only examplified. Namely, in Section~\ref{S4} we show that the statement of this Theorem also holds on examples, in particular pentanomials, for dynamical degrees $n$ less than $260\,$.

Concerning the asymptotic probability of irreducibility of the polynomials of the class $\B$ at large degrees, our numerical results shown in Figure~\ref{IrreducibilityMontecarlo}, using the \textsc{Monte-Carlo} method (see the pseudo-code \ref{algo:mc}), suggest the following
\begin{conjecture}[Asymptotic Reducibility Conjecture]
Let $n\ \geq\ 2$ and $N\ \geq\ n\,$. Let $\B_{\,n}^{\,(N)}$ denote the set of the polynomials $f\ \in\ \B_{\,n}$ such that $\deg\,(f)\ \leq\ N\,$. Let $\B^{\,(N)}\ :=\ \bigcup_{2\ \leq\ n\ \leq\ N}\,\B_{\,n}^{\,(N)}\,$. The proportion of
polynomials in $\B\ =\ \bigcup_{\,N\ \geq\ 2}\,\B^{\,(N)}$ which are irreducible is given by the limit, assumed to exist,
\begin{equation*}
  \lim_{N\ \to\ \infty}\,\frac{\#\{\, f\ \in\ \B^{\,(N)}\ \Bigl\vert\ f\ \mbox{irreducible}\, \}}{\#\{\, f\ \in\ \B^{\,(N)}\,\}}\ =\ \frac{3}{4}\,.
\end{equation*}
\end{conjecture}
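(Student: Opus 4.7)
The plan rests on Theorem~\ref{thm1factorization}. For any $f\in\B_n$ with $s\geq 1$, the nonreciprocal part $C$ is automatically nontrivial and irreducible, so $f$ is irreducible if and only if its cyclotomic part $A$ and its reciprocal noncyclotomic part $B$ are both trivial. The trinomial stratum $s=0$ contributes vanishingly small mass to $\B^{(N)}$ as $N\to\infty$, so the task reduces to computing, as $N\to\infty$, the density of those $f\in\B^{(N)}$ with $s\geq 1$ satisfying $A=B=1$, and matching it with $3/4$.

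First I would set up the combinatorial model. The set $\B_{\,n}^{\,(N)}$ is in bijection with the finite strictly increasing sequences $n<m_{\,1}<\cdots<m_{\,s}\leq N$ satisfying the gap conditions $m_{\,1}-n\geq n-1$ and $m_{\,j+1}-m_{\,j}\geq n-1$. A careful count, summing over $n$ and $s$, yields the asymptotics of $\#\B^{\,(N)}$; more importantly, for any fixed modulus $k\geq 2$, the same counting technique delivers asymptotic estimates for the number of elements of $\B^{\,(N)}$ whose exponent tuple $(n,m_{\,1},\ldots,m_{\,s})$ lies in a prescribed union of residue classes modulo $k$.

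Second I would compute the contribution of the cyclotomic part. For each primitive $k$-th root of unity $\zeta_{\,k}\,$, the divisibility $\Phi_{\,k}\mid f$ is equivalent to the single equation $f(\zeta_{\,k})=0$ in $\Z\,[\zeta_{\,k}]\,$, which after expansion on a $\Q$-basis of $\Q\,(\zeta_{\,k})$ translates into a finite system of linear congruences on $(n,m_{\,1},\ldots,m_{\,s})\pmod{k}\,$. For each small $k$ this yields an explicit asymptotic density of $f\in\B^{\,(N)}$ with $\Phi_{\,k}\mid f\,$. Inclusion--exclusion over the finitely many small $k$ that contribute positive density, together with a tail estimate showing that the event $\Phi_{\,k}\mid f$ has density tending to zero as $k\to\infty$ (because the number of monomials $s+3$ is typically small compared to the index set $\Z/k\Z$), then produces the density $\delta_{\,A}$ of $f$ with $A\neq 1\,$. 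In analogy with \textsc{Selmer}'s theorem, $\Phi_{\,6}=x^{\,2}-x+1$ should be the dominant contributor.

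The main obstacle is the reciprocal noncyclotomic part $B\,$. One must show that almost surely $B=1\,$, i.e.\ that generic $f\in\B$ admit no reciprocal noncyclotomic divisors. This is a lacunary analogue of the variant Conjecture~\ref{odlyzkopoonenCJ} and appears to lie beyond current methods: the theorems of \textsc{Schinzel} and \textsc{Filaseta} provide only sufficient conditions for the absence of such factors, without density statements. A natural route would combine a quantitative sharpening of \textsc{Filaseta}'s arithmetic Newton polygon machinery with \textsc{Konyagin}--type lower bounds adapted to the sparse family $\B\,$. Granting that $B$ contributes density zero, the target value $3/4$ should emerge as $1-\delta_{\,A}$ from the cyclotomic count; matching this number exactly, in agreement with the \textsc{Monte-Carlo} estimate $\approx 0.756\,$, will likely require a delicate case-by-case enumeration of the small cyclotomic factors compatible with the gap structure defining $\B\,$.
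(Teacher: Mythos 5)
There is nothing in the paper to check your argument against: the statement you are addressing is the \emph{Asymptotic Reducibility Conjecture}, which the authors explicitly do not prove. Their only support for the value $3/4$ is a Monte-Carlo experiment (degrees up to $3\,000$, $4\,000$ runs, yielding $0.756\pm 0.02235$), together with the informal remark in Section~\ref{S6} that the residual reducibility should come from cyclotomic factors as in Proposition~\ref{beaucoup} --- ``though the authors have no proof of it.'' So your proposal cannot be ``the same approach as the paper'': it is an attempt at an actual proof of something the paper only conjectures.

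As a proof, your proposal has genuine gaps, two of which you acknowledge yourself. First, the reduction ``$f$ irreducible $\iff A=B=1$'' via Theorem~\ref{thm1factorization} is correct, and translating $\Phi_{\,k}\mid f$ into congruence conditions on the exponents (as in Lemma~\ref{boydlemma} and Proposition~\ref{prop5.4}) is the right combinatorial language; but you never carry out the density computation, so nothing in the argument actually produces the number $3/4$ --- you only assert that $1-\delta_{\,A}$ ``should'' match it. Given that the Monte-Carlo value $0.756$ is consistent with, but does not pin down, $3/4$, the entire content of the conjecture is precisely this unperformed computation. Second, the claim that the reciprocal noncyclotomic part $B$ is trivial for a density-one subset of $\B$ is, as you say, beyond current methods; Proposition~\ref{noncycloreciprPart} only bounds the degree of $f$ from below when such a factor has a lenticular root, and says nothing about density. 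Third, your tail estimate for large $k$ is shakier than you suggest: in the counting model where $n$ is small and $N\to\infty$, the number of monomials $s+3$ is typically of order $N/n$ and hence unbounded, so the heuristic ``$s+3$ is small compared to $\Z/k\Z$'' does not apply uniformly; controlling $\sum_{k}\delta_{\,k}$ requires a real argument (Lemma~\ref{lemmaexiste} restricts the prime factors of admissible $k$ to be $\leq s+3$, which helps, but only after the distribution of $s$ is understood). In short: the strategy is reasonable and correctly organised around the paper's Theorem~\ref{thm1factorization}, but it is a research programme, not a proof, and the conjecture remains open.
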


%%% ----------------------------------------------------------------------- %%%

\section{Quadrinomials ($s\ =\ 1$)}
\label{S2}

Since every $f\ \in\ \B$ is nonreciprocal and such that $f\,(1)\ \neq\ 0\,$, $f$ is never divisible by the cyclotomic nonreciprocal polynomial $-\,1\ +\ x\,$. When $f\ \in\ \B$ is a quadrinomial, the following Theorems provide all the possible factorizations of $f\,$.

\begin{theorem}[Ljunggren \cite{Ljunggren1960}]
\label{ljunggrenthm1}
If $f\ \in\ \B\,$, as
\begin{equation*}
  f\,(x)\ =\ -\,1\ +\ x\ +\ x^{\,n}\ +\ x^{\,m_{\,1}}\,,
\end{equation*}
has no zeroes which are roots of unity, then $f\,(x)$ is irreducible. If $f\,(x)$ has exactly $q$ such zeroes, then $f\,(x)$ can be decomposed into two rational factors, one of which is cyclotomic of degree $q$ with all these roots of unity as zeroes, while the other is irreducible (and nonreciprocal).
\end{theorem}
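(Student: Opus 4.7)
The plan is to reduce the quadrinomial case $s=1$ to the general factorization already guaranteed by Theorem~\ref{thm1factorization}. Applied to $f\,(x)=-1+x+x^{\,n}+x^{\,m_{\,1}}\in\B_{\,n}$, that theorem produces the decomposition $f=A\cdot B\cdot C$ with $A$ cyclotomic, $B$ reciprocal noncyclotomic, and $C$ nontrivial, irreducible, nonreciprocal and with no zero on $\T$. The entire statement of Theorem~\ref{ljunggrenthm1} will follow once one establishes that for every quadrinomial in $\B$ the reciprocal noncyclotomic part satisfies $B\equiv 1$; the resulting two-factor decomposition $f=A\cdot C$ is then exactly the one claimed, with $A$ cyclotomic carrying the $q$ root-of-unity zeroes and $C$ irreducible nonreciprocal. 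In particular, if $q=0$ then $A=1$ and $f=C$ is irreducible.

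The central step is therefore to prove $B=1$ by a direct computation exploiting the quadrinomial shape. Assume $g$ is an irreducible reciprocal factor of $f$ of positive degree and let $\alpha$ be any root of $g$. Then $\alpha\neq 0$ and $1/\alpha$ is also a root of $g$, hence of $f$. Writing $f\,(\alpha)=0$ and $\alpha^{\,m_{\,1}}\,f\,(1/\alpha)=0$ and adding them, the constant terms and the monomials of degree $m_{\,1}$ cancel, leaving
\begin{equation*}
  \alpha\ +\ \alpha^{\,n}\ +\ \alpha^{\,m_{\,1}-n}\ +\ \alpha^{\,m_{\,1}-1}\ =\ 0.
\end{equation*}
The surviving exponents pair as $\{1,n\}$ and $\{m_{\,1}-n,m_{\,1}-1\}$, each pair differing by $n-1$, so the four-term sum factors as
\begin{equation*}
  \alpha\,\bigl(1\ +\ \alpha^{\,m_{\,1}-n-1}\bigr)\,\bigl(1\ +\ \alpha^{\,n-1}\bigr)\ =\ 0.
\end{equation*}
Since $n\geq 3$ and $m_{\,1}\geq 2n-1$, both exponents $n-1$ and $m_{\,1}-n-1$ are at least $1$, hence $\alpha$ is a root of unity. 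The minimal polynomial $g$ of $\alpha$ is then cyclotomic, contradicting the noncyclotomic assumption, and $B=1$.

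The proof then closes by reading off the remaining assertions from Theorem~\ref{thm1factorization}: the nonreciprocal part $C$ is irreducible, nonreciprocal, and of positive degree, so when $q\geq 1$ the decomposition $f=A\cdot C$ is exactly the one stated in Theorem~\ref{ljunggrenthm1}, while when $q=0$ the irreducibility of $f$ is immediate. I expect the only delicate point to be the algebraic identity above: both the cancellation down to only four monomials and the appearance of the common factor $1+\alpha^{\,n-1}$ rely crucially on $f$ being a quadrinomial. This is precisely why the general case $s\geq 2$ of Theorem~\ref{thm1factorization} cannot be recovered by this elementary computation and instead appeals to the stronger machinery of Schinzel and Filaseta.
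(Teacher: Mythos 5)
The paper does not actually prove this statement: it is quoted verbatim from Ljunggren's 1960 paper as background, and the very next sentence warns that Ljunggren's theorem ``is not completely correct'' in general (Mills' correction, Theorem~\ref{thmmills}). Your proposal therefore supplies a proof where the paper supplies only a citation, and the route you take is sound within the paper's own framework. The reduction to Theorem~\ref{thm1factorization} is not circular, since that theorem is proved in Section~\ref{S5.5} from Propositions~\ref{descartesrule}, \ref{nonreciprocalpart} and \ref{neverunitcircle}, none of which invokes Theorem~\ref{ljunggrenthm1}. Your central computation checks out: adding $f(\alpha)=0$ to $\alpha^{m_1}f(1/\alpha)=0$ does leave $\alpha+\alpha^{n}+\alpha^{m_1-n}+\alpha^{m_1-1}$, which factors as $\alpha\,(1+\alpha^{m_1-n-1})(1+\alpha^{n-1})$, and the gap condition $m_1-n\geq n-1$ with $n\geq 3$ makes both exponents positive, forcing $\alpha$ to be a root of unity and hence any irreducible reciprocal factor to be cyclotomic. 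This is exactly the mechanism that kills $B$, and it is also consistent with Mills' exceptional family $-1+x^{r}+x^{7r}+x^{8r}$, which violates the gap condition and so never enters $\B$. Two small caveats. First, the assertion that the cyclotomic factor has degree exactly $q$ (the number of root-of-unity zeroes) tacitly requires those zeroes to be simple; this is the content of the second Ljunggren theorem quoted in Section~\ref{S2}, and your argument does not establish it, so you should either cite that statement or count the $q$ zeroes with multiplicity. Second, the class $\B$ is written in the paper as a union over $n\geq 2$; for $n=2$ your bound $m_1-n-1\geq n-2$ degenerates to $\geq 0$, but the conclusion survives because a factor $1+\alpha^{0}=2$ simply cannot vanish, so the other factor must. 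Neither point affects the substance of the argument.
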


\textsc{Ljunggren}'\,s Theorem~\ref{ljunggrenthm1} is not completely correct. \textsc{Mills} corrected it (Theorem~\ref{thmmills}). \textsc{Finch} and \textsc{Jones} completed the results (Theorem~\ref{thmfinchjones}).

\begin{theorem}[Ljunggren \cite{Ljunggren1960}]
If $f\ \in\ \mathcal{B}\,$, with $s\ =\ 1\,$, as 
\begin{equation*}
  f\,(x)\ =\ -\,1\ +\ x\ +\ x^{\,n}\ +\ x^{\,m_{\,1}}\,,
\end{equation*}
with $e_{\,1}\ =\ \gcd\,(m_{\,1},\, n\,-\,1)\,$, $e_{\,2}\ =\ \gcd\,(n,\,m_{\,1}\ -\ 1)\,$, then all possible roots of unity of $f\,(x)$ are simple zeroes, which are to be found among the zeroes of
\begin{equation*}
  x^{\,e_{\,1}}\ =\ \pm\,1\,, \qquad x^{\,e_{\,2}}\ =\ \pm\,1\,, \qquad x\ =\ -\,1\,.
\end{equation*}
\end{theorem}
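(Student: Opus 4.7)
My plan is to fix a root of unity $\zeta=\mathrm{e}^{\mathrm{i}\theta}$ with $f(\zeta)=0$ and exploit $|\zeta|=1$. The four monomials $-1,\zeta,\zeta^n,\zeta^{m_1}$ sum to zero, and I pair them into two pairs in the three essentially distinct ways. For each pairing $\zeta^a-1=-(\zeta^b+\zeta^c)$ (with $\{a,b,c\}=\{1,n,m_1\}$), taking squared moduli and using $|\zeta^b+\zeta^c|^2=|1+\zeta^{c-b}|^2=2+2\cos((c-b)\theta)$ yields an identity of the form $\cos(a\theta)+\cos((c-b)\theta)=0$. Sum-to-product factorisation converts each such identity into a dichotomy stating that a specific power of $\zeta$ equals $-1$; concretely, I obtain the three assertions
\begin{equation*}
  P:\ \zeta^{m_1-n+1}=-1,\qquad Q:\ \zeta^{n-m_1+1}=-1,\qquad R:\ \zeta^{m_1+n-1}=-1,
\end{equation*}
and the three splittings give the dichotomies $P\vee Q$, $R\vee Q$, and $R\vee P$ respectively.

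Next, a short Boolean case analysis (keeping in mind $\zeta\neq 1$ since $f(1)=2$) collapses these dichotomies: the three of them are simultaneously satisfiable only when at least two of $P,Q,R$ hold. If $P\wedge R$, multiplying and dividing the two equalities yields $\zeta^{2m_1}=1$ and $\zeta^{2(n-1)}=1$, so the order of $\zeta$ divides $\gcd(2m_1,2(n-1))=2\gcd(m_1,n-1)=2e_1$, which is exactly $\zeta^{e_1}=\pm 1$. Symmetrically, $Q\wedge R$ gives $\zeta^{2n}=\zeta^{2(m_1-1)}=1$ and so $\zeta^{e_2}=\pm 1$. Finally, $P\wedge Q$ forces $\zeta^2=1$, hence $\zeta=-1$. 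Every satisfying Boolean configuration lands in one of these three cases, proving the location claim.

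For simplicity, suppose toward contradiction $f'(\zeta)=0$, equivalently $\zeta f'(\zeta)=\zeta+n\zeta^n+m_1\zeta^{m_1}=0$; subtracting the identity $\zeta+\zeta^n+\zeta^{m_1}=1$ yields the extra relation $(n-1)\zeta^n+(m_1-1)\zeta^{m_1}=-1$. In the case $\zeta^{e_1}=\pm 1$, writing $\eta:=\zeta^{n-1}\in\{\pm 1\}$ and $\epsilon:=\zeta^{m_1}\in\{\pm 1\}$, the equation $f(\zeta)=0$ reads $(1+\eta)\zeta=1-\epsilon$ and forces $(\eta,\epsilon)=(-1,1)$; substituting into the derivative relation gives $\zeta=m_1/(n-1)$, which fails $|\zeta|=1$ since $m_1\geq 2n-1>n-1$. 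The case $\zeta^{e_2}=\pm 1$ is symmetric and yields $\zeta=n/(m_1-1)\notin\T$; and $f'(-1)=1-n-m_1\neq 0$ disposes of $\zeta=-1$. The main technical obstacle is the arithmetic collapse $\gcd(2m_1,2(n-1))=2e_1$ and its $e_2$ analogue, which is what converts the three dichotomies into the clean conclusion; organising the eight-branch Boolean analysis without overlooking the $\zeta=-1$ branch is the other point that requires care, while the simplicity argument is then a routine check once the three cases are in hand.
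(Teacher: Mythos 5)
Your argument is correct, but note that the paper itself offers no proof of this statement: it is quoted verbatim from Ljunggren's 1960 work, so there is nothing internal to compare against. Your route --- pairing the four monomials of $-1+\zeta+\zeta^{\,n}+\zeta^{\,m_{\,1}}=0$ in the three possible ways, converting each pairing into $\cos(a\theta)+\cos((c-b)\theta)=0$ via squared moduli, and factoring to get the dichotomies $P\vee Q$, $Q\vee R$, $R\vee P$ --- is a clean, self-contained elementary proof. The pigeonhole step (at least two of $P,Q,R$ must hold) and the gcd collapse $\gcd(2m_{\,1},\,2(n-1))=2e_{\,1}$, $\gcd(2n,\,2(m_{\,1}-1))=2e_{\,2}$ land exactly on the three stated loci, and the simplicity check via $\zeta f'(\zeta)-f(\zeta)$ correctly forces $\zeta=m_{\,1}/(n-1)$ or $\zeta=n/(m_{\,1}-1)$ off the unit circle (using $m_{\,1}\geq 2n-1$). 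Ljunggren's original treatment proceeds instead through the observation that a unimodular algebraic root $\zeta$ of $f$ is simultaneously a root of $f^{\,*}$, and extracts the same gcd conditions from the resulting pair of relations; your modulus-squared computation is essentially an explicit, coordinate-level version of that idea. One small imprecision: $f'(-1)=1+n(-1)^{\,n-1}+m_{\,1}(-1)^{\,m_{\,1}-1}$ equals $1-n-m_{\,1}$ only when $n$ and $m_{\,1}$ are both even --- but that is precisely the case in which $f(-1)=0$, so the branch you need is the one you computed; it would be worth stating that restriction explicitly.
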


\begin{theorem}[Ljunggren \cite{Ljunggren1960}]
If $f\ \in\ \B\,$, with $s\ =\ 1\,$, as
\begin{equation*}
  f\,(x)\ =\ -\,1\ +\ x\ +\ x^{\,n}\ +\ x^{\,m_{\,1}}\,,
\end{equation*}
is such that both $n$ and $m_{\,1}$ are odd integers, then $f\,(x)$ is irreducible. 
\end{theorem}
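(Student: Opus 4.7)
The plan is to combine the two previous Ljunggren theorems. Theorem \ref{ljunggrenthm1} reduces irreducibility of $f(x) = -1 + x + x^n + x^{m_1}$ to showing that $f$ has no roots of unity among its zeroes, while the subsequent theorem localizes any such roots of unity among the zeroes of $x^{e_1} = \pm 1$, $x^{e_2} = \pm 1$, or $x = -1$, where $e_1 = \gcd(m_1, n-1)$ and $e_2 = \gcd(n, m_1-1)$. So I would systematically exclude each of these three possibilities under the hypothesis that $n$ and $m_1$ are both odd.

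First I would rule out $x = -1$: since $n$ and $m_1$ are odd,
\begin{equation*}
  f(-1)\ =\ -1\ +\ (-1)\ +\ (-1)^{\,n}\ +\ (-1)^{\,m_{\,1}}\ =\ -4\ \neq\ 0\,.
\end{equation*}
Next I would record the key parity observation used throughout: since $n$ and $m_1$ are odd while $n-1$ and $m_1-1$ are even, the divisors $e_1 \mid \gcd(m_1, n-1)$ and $e_2 \mid \gcd(n, m_1-1)$ are both odd (each divides an odd integer). Consequently $m_1/e_1$, $n/e_2$ are odd while $(n-1)/e_1$, $(m_1-1)/e_2$ are even.

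Then I would treat the four remaining cases by direct substitution. Suppose $\zeta^{e_1} = \epsilon \in \{-1,+1\}$ and $f(\zeta) = 0$. Using $\zeta^n = \epsilon^{(n-1)/e_1}\,\zeta$ and $\zeta^{m_1} = \epsilon^{m_1/e_1}$, the parity of the exponents forces $\zeta^{n-1} = 1$ always, and $\zeta^{m_1} = \epsilon$. Hence
\begin{equation*}
  f(\zeta)\ =\ -1\ +\ \zeta\ +\ \zeta\ +\ \epsilon\ =\ 2\zeta\ +\ (\epsilon - 1)\,,
\end{equation*}
which is $2\zeta$ if $\epsilon = 1$ (never zero) and $2(\zeta - 1)$ if $\epsilon = -1$ (forcing $\zeta = 1$, contradicting $\zeta^{e_1} = -1$). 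The same computation with the roles of $n$ and $m_1$ swapped handles the case $\zeta^{e_2} = \pm 1$: here $n/e_2$ odd gives $\zeta^n = \epsilon$, $(m_1-1)/e_2$ even gives $\zeta^{m_1} = \zeta$, and $f(\zeta) = 2\zeta + (\epsilon - 1)$ again, leading to the identical contradiction.

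Having eliminated every candidate root of unity, Theorem \ref{ljunggrenthm1} delivers the irreducibility of $f(x)$. The only potential obstacle is a bookkeeping one: one must be careful that the quotients $m_1/e_1$, $(n-1)/e_1$, $n/e_2$, $(m_1-1)/e_2$ are all integers (which follows from the divisibility defining $e_1, e_2$) and keep track of their parities, but no deeper structural input is needed beyond the previous two theorems.
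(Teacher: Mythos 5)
Your argument is correct, but there is nothing in the paper to compare it against: the statement is quoted from Ljunggren \cite{Ljunggren1960} and the paper supplies no proof of it. Your derivation from the two preceding quoted theorems is sound --- the parity bookkeeping ($e_1\mid m_1$ and $e_2\mid n$ force $e_1,e_2$ odd, hence $m_1/e_1$, $n/e_2$ odd and $(n-1)/e_1$, $(m_1-1)/e_2$ even) is exactly what reduces every candidate root of unity to $2\zeta+(\epsilon-1)=0$, which is impossible, and $f(-1)=-4$ disposes of the remaining case. The one point worth flagging is that you invoke Theorem~\ref{ljunggrenthm1}, which the paper explicitly warns is ``not completely correct.'' The clause you actually use (no roots of unity among the zeroes $\Rightarrow$ irreducible) is the part unaffected by Mills' correction, since the exceptional factorization in Theorem~\ref{thmmills} is $-1+x^r+x^{7r}+x^{8r}$, which does possess roots of unity as zeroes and, within the class $\mathcal{B}$, forces $r=1$, $n=7$, $m_1=8$ with $m_1$ even --- excluded by your hypothesis. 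It would be cleaner to route the final step through Theorem~\ref{thmmills} (with $A$ trivial) or to observe that the conclusion also drops out of Theorem~\ref{thmfinchjones} immediately: $m_1$ odd gives $m_1\not\equiv 0\ (\mathrm{mod}\ 2e_1)$ and $n$ odd gives $n\not\equiv 0\ (\mathrm{mod}\ 2e_2)$. Either repair costs one sentence and removes any dependence on the flawed statement.
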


\begin{theorem}[Mills \cite{Mills1985}]\label{thmmills}
Let $f\ \in\ \B\,$, with $s\ =\ 1\,$,
\begin{equation*}
  f\,(x)\ =\ -\,1\ +\ x\ +\ x^{\,n}\ +\ x^{\,m_{\,1}}
\end{equation*}
decomposed as $f\,(x)\ =\ A\,(x)\cdot B\,(x)$ where every root of $A\,(x)$ and no root of $B\,(x)$ is a root of unity. Then $A\,(x)$ is the greatest common divisor of $f\,(x)$ and $f^{\,*}\,(x)\ =\ x^{\,m_{\,1}}\cdot f\,(1/x)\,$, then reciprocal cyclotomic, and the second factor $B\,(x)$ is irreducible, then nonreciprocal, except when $f\,(x)$ has the following form:
\begin{equation*}
  -\,1\ +\ x^{\,r}\ +\ x^{\,7\,r}\ +\ x^{\,8\,r}\ =\ (x^{\,2\,r}\ +\ 1)\cdot(x^{\,3\,r}\ +\ x^{\,2\,r}\ -\ 1)\cdot(x^{\,3\,r}\ -\ x^{\,r}\ +\ 1)\,.
\end{equation*}
In the last case, the factors $x^{\,3\,r}\ +\ x^{\,2\,r}\ -\ 1$ and $x^{\,3\,r}\ -\ x^{\,r}\ +\ 1$ are (nonreciprocal) irreducible.
\end{theorem}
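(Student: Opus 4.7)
The plan is to handle the two halves of the theorem separately: the characterization of $A$ via $\gcd\,(f,\, f^{\,*})\,$, and the irreducibility of $B$ together with the identification of the sporadic exception. For the first half, note that $f\,(1) = 2 \neq 0\,$, so $x - 1$ is not a factor, and hence every cyclotomic divisor of $f$ is some $\Phi_{\,d}$ with $d \geq 2\,$, which is reciprocal. This gives $A = A^{\,*}\,$; combined with $A \mid f$ this yields $A \mid f^{\,*}\,$ and therefore $A \mid \gcd\,(f,\, f^{\,*})\,$. Conversely, every common root $\alpha$ of $f$ and $f^{\,*}$ pairs with $1/\alpha$ in the root set of $f\,$; applying the preceding Ljunggren theorem --- which confines the roots of unity of $f$ to simple zeroes of $x^{\,e_{\,1}} = \pm\,1\,$, $x^{\,e_{\,2}} = \pm\,1\,$, or $x = -1$ --- together with Kronecker's theorem, I would check that no noncyclotomic reciprocal irreducible factor can persist in the gcd. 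Hence $A = \gcd\,(f,\, f^{\,*})$ and this polynomial is squarefree reciprocal cyclotomic.

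For the irreducibility of $B\,$, I would suppose $B = B_{\,1}\cdot B_{\,2}$ is a nontrivial factorization in $\Z\,[x]$ and derive the Mills exception via Schinzel's theorem on the reducibility of lacunary polynomials. Writing $f\,(x) = \Phi\,(x,\, x^{\,n},\, x^{\,m_{\,1}})$ with the multivariate polynomial $\Phi\,(X_{\,0},\, X_{\,1},\, X_{\,2}) = -1 + X_{\,0} + X_{\,1} + X_{\,2}\,$, every noncyclotomic nontrivial factorization of $f$ must lift to a nontrivial factorization of $\Phi \circ \sigma$ for some integer matrix $\sigma$ compatible with the kernel of the specialization $(a,\, b,\, c) \mapsto a + b\,n + c\,m_{\,1}\,$. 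The Ljunggren gcds $e_{\,1} = \gcd\,(m_{\,1},\, n - 1)$ and $e_{\,2} = \gcd\,(n,\, m_{\,1} - 1)$ parameterize the admissible lattice relations. A finite case enumeration should then single out exactly one sporadic family $(n,\, m_{\,1}) = (7\,r,\, 8\,r)\,$, verified by the identity (with $y = x^{\,r}$)
\begin{equation*}
  -1 + y + y^{\,7} + y^{\,8} = (y^{\,2} + 1)\cdot(y^{\,3} + y^{\,2} - 1)\cdot(y^{\,3} - y + 1)\,,
\end{equation*}
the first factor $y^{\,2} + 1 = \Phi_{\,4}\,(y)$ being absorbed into $A\,$.

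In the exceptional case, the cubic factors $g_{\,1}\,(y) = y^{\,3} + y^{\,2} - 1$ and $g_{\,2}\,(y) = y^{\,3} - y + 1$ are irreducible over $\Q$ (neither has a rational root $\pm\,1$), and they remain irreducible under the substitution $y \mapsto x^{\,r}$ by Capelli's theorem applied to the extensions $\Q\,(\alpha)/\Q$ with $\alpha$ a root of $g_{\,i}\,$. The main obstacle will be the finite case enumeration underlying the Schinzel reduction --- this is where the combinatorial heart of the theorem sits, and careful bookkeeping of the exponent lattice relations (or equivalently of the parameters $e_{\,1},\, e_{\,2}$) is required to confirm that the family $(7\,r,\, 8\,r)$ is indeed the only sporadic exception, with every other pair $(n,\, m_{\,1})$ forcing $B$ to be irreducible.
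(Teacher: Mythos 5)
First, a point of comparison: the paper offers no proof of this statement at all --- it is quoted from Mills \cite{Mills1985} as background for Section~\ref{S2}, so there is nothing internal to measure your argument against. Judged on its own terms, your proposal is a reasonable plan but not a proof, because the two places where the actual mathematical content lives are both deferred. (i) In the first half, the identity $A = \gcd\,(f,\, f^{\,*})$ requires showing that $\gcd\,(f,\, f^{\,*})$ contains no \emph{noncyclotomic} reciprocal irreducible factor; you propose to get this from ``Ljunggren's theorem together with Kronecker's theorem,'' but Kronecker's theorem only applies to algebraic integers all of whose conjugates lie \emph{on} the unit circle, and a reciprocal noncyclotomic factor need not have that property (its roots come in pairs $\alpha,\, 1/\alpha$ with $\abs{\alpha} \neq 1$ allowed). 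Ruling such factors out is precisely part of what Ljunggren and Mills prove, and the paper itself warns that Ljunggren's original statement is ``not completely correct,'' so leaning on it here is close to circular. (ii) In the second half, the entire theorem is the claim that the finite case enumeration over exponent-lattice relations produces exactly the family $(n,\, m_{\,1}) = (7\,r,\, 8\,r)$ and nothing else; you explicitly leave this enumeration undone. Everything you do carry out (the easy inclusion $A \mid \gcd\,(f,\, f^{\,*})$ via $f\,(1) = 2 \neq 0$, the verification of the displayed identity, the rational-root check on the two cubics) is correct but peripheral.

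Two smaller remarks. The Capelli step also needs work: irreducibility of $g\,(x^{\,r})$ for $g$ an irreducible cubic with root $\alpha$ requires checking that $x^{\,r} - \alpha$ is irreducible over $\Q\,(\alpha)$ for every $r$, i.e., that $\alpha$ is not a $p$-th power in $\Q\,(\alpha)$ for any prime $p \mid r$ (plus the extra condition when $4 \mid r$); this is not automatic and must be verified for the specific units $\alpha$ arising from $y^{\,3} + y^{\,2} - 1$ and $y^{\,3} - y + 1$. Finally, note that the exceptional family $-1 + x^{\,r} + x^{\,7\,r} + x^{\,8\,r}$ never satisfies the constraint $m_{\,1} - n \geq n - 1$ defining $\B$, so Mills's exception is vacuous inside the class the paper studies --- the theorem as quoted is really about general quadrinomials $-1 + x^{\,a} + x^{\,b} + x^{\,c}$, and that is the setting in which your Schinzel-type enumeration would actually have to be run.
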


\begin{theorem}[Finch -- Jones, \cite{Finch2006}]\label{thmfinchjones}
Let $f\ \in\ \B\,$, with $s\ =\ 1\,$,
\begin{equation*}
  f\,(x)\ =\ -\,1\ +\ x\ +\ x^{\,n}\ +\ x^{\,m_{\,1}}\,.
\end{equation*}
Let $e_{\,1}\ =\ \gcd\,(m_{\,1},\,n\,-\,1)\,$, $e_{\,2}\ =\ \gcd\,(n,\,m_{\,1}\,-\,1)\,$. The quadrinomial $f\,(x)$ is irreducible over $\Q$ if and only if
\begin{equation*}
  m_{\,1}\ \not\equiv\ 0~(\mod~ 2\,e_{\,1})\,, \qquad n\ \not\equiv\ 0~(\mod~ 2\,\,e_{\,2})\,.
\end{equation*}
\end{theorem}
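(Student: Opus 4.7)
The plan is to reduce the irreducibility question to the presence of a root of unity among the roots of $f$, and then to a short arithmetic case analysis. A crucial preliminary observation is that the Mills exceptional family $-1 + x^{\,r} + x^{\,7r} + x^{\,8r}$ never occurs in $\B$: it would force $m_{\,1} - n = r < 7r - 1 = n - 1$, violating the spacing condition $m_{\,1} - n \geq n - 1$. Consequently, Mills' Theorem~\ref{thmmills} applies in its non-exceptional form to any $f \in \B$ with $s = 1$: writing $f = A \cdot B$ with $A$ the cyclotomic part and $B$ the non-cyclotomic part, $B$ is automatically irreducible. Hence $f$ is irreducible over $\Q$ if and only if $f$ has no root of unity among its roots.

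Next I would apply the preceding Ljunggren theorem to restrict candidate roots of unity $\zeta$ to those satisfying $\zeta^{\,e_{\,1}} = \pm 1$, $\zeta^{\,e_{\,2}} = \pm 1$, or $\zeta = -1$, and substitute into $f(\zeta) = -1 + \zeta + \zeta^{\,n} + \zeta^{\,m_{\,1}}$. Setting $a := m_{\,1}/e_{\,1}$ and $b := (n-1)/e_{\,1}$ (so $\gcd(a, b) = 1$), the divisibilities $e_{\,1} \mid m_{\,1}$ and $e_{\,1} \mid n - 1$ give $\zeta^{\,m_{\,1}} = (\zeta^{\,e_{\,1}})^{\,a}$ and $\zeta^{\,n} = \zeta \cdot (\zeta^{\,e_{\,1}})^{\,b}$. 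Then $\zeta^{\,e_{\,1}} = 1$ yields $f(\zeta) = 2\zeta \ne 0$, while $\zeta^{\,e_{\,1}} = -1$ yields $f(\zeta) = -1 + (-1)^{\,a} + \zeta\,(1 + (-1)^{\,b})$. A quick tabulation over the three admissible parities $(a \bmod 2,\, b \bmod 2) \in \{(1,1),\,(1,0),\,(0,1)\}$ (the case $(0,0)$ is excluded by $\gcd(a,b) = 1$) gives the values $-2,\ 2\zeta - 2,\ 0$ respectively; the vanishing case corresponds exactly to $a$ even, equivalently $2e_{\,1} \mid m_{\,1}$. An identical computation with the roles of $e_{\,1},\, e_{\,2}$ exchanged produces the companion condition $2e_{\,2} \mid n$.

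Finally, the case $\zeta = -1$ gives $f(-1) = -2 + (-1)^{\,n} + (-1)^{\,m_{\,1}}$, which vanishes iff $n$ and $m_{\,1}$ are both even; but then $n - 1$ is odd, forcing $e_{\,1}$ to be odd, so that $2e_{\,1} \mid m_{\,1}$ reduces to $2 \mid m_{\,1}$, which already holds. Hence this case introduces no new condition. Combining everything, $f$ has a root of unity as a root if and only if $2e_{\,1} \mid m_{\,1}$ or $2e_{\,2} \mid n$, which is exactly the negation of the claimed criterion. The main point of care is the very first paragraph: without the observation that the spacing condition excludes the Mills exceptional family, one would need a separate direct argument to rule it out. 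Everything else is elementary arithmetic with roots of unity.
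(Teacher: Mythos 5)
The paper does not prove this statement: it is quoted verbatim from Finch--Jones \cite{Finch2006} with no argument supplied, so there is no internal proof to compare yours against. Judged on its own, your derivation from the two preceding quoted results (Ljunggren's localisation of the possible roots of unity and Mills' decomposition $f = A\cdot B$ with $B$ irreducible) is correct: the exclusion of the Mills exceptional family by the spacing condition $m_{\,1}-n\geq n-1$ is the right first step, and the parity tabulation using $\gcd(a,b)=1$ correctly identifies $2e_{\,1}\mid m_{\,1}$ (resp.\ $2e_{\,2}\mid n$) as the exact condition for $f$ to vanish at some $\zeta$ with $\zeta^{\,e_{\,1}}=-1$ (resp.\ $\zeta^{\,e_{\,2}}=-1$), with the $\zeta=-1$ case absorbed as you say. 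Two small points deserve one explicit line each. First, in the sub-case $(a,b)=(\mathrm{odd},\mathrm{even})$ the value $2\zeta-2$ is nonzero only because $\zeta^{\,e_{\,1}}=-1$ forces $\zeta\neq 1$; you use this silently. Second, the equivalence ``$f$ reducible $\iff$ $f$ has a root of unity'' needs the observation that $B$ is never constant, i.e.\ that $f$ is not purely cyclotomic; this is immediate since $f(0)=-1<0<2=f(1)$ gives a real root in $(0,1)$, which is not a root of unity, but it should be said, since otherwise ``$A\neq 1$'' alone does not yield reducibility. With those two sentences added, the proof is complete.
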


%%% ----------------------------------------------------------------------- %%%

\section{Noncyclotomic reciprocal factors}
\label{S4}

In this Section we investigate the possible irreducible factors, in the factorization of a polynomial $f\ \in\ \B_{\,n}\,$, with $n$ large enough, which vanish on the lenticular zeroes, or a subcollection of them. In Proposition~\ref{noncycloreciprPart} it is proved that the degrees of the noncyclotomic reciprocal factors (if they exist), and therefore the degrees of such $f\,$, should be fairly large. Proposition~\ref{noncycloreciprPart} does not say that the degrees of the noncyclotomic reciprocal factors are large. For the sake of simplicity, the value $c_{\,n}$ (defining the lenticulus of zeroes of $f$) is taken to be equal to $-\,\lo \kappa\,$.

\begin{proposition}\label{noncycloreciprPart}
If $f\,(x)\ :=\ -\,1\ +\ x\ +\ x^{\,n}\ +\ x^{\,m_{\,1}}\ +\ x^{\,m_{\,2}}\ +\ \ldots\ +\ x^{\,m_{\,s}}\ \in\ \B_{\,n}\,$, $s\ \geq\ 1\,$, $n\ \geq\ 260\,$, admits a reciprocal noncyclotomic factor in its factorization which has a root of modulus $\geq\ 1\ +\ (1\ -\ c)\cdot\bigl(-\,\frac{\lo\kappa}{n}\bigr)\ +\ c\;(\,\theta_{\,n}^{\,-1}\ -\ 1\,)\,$, for some $0\ \leq\ c\ \leq\ 1\,$, then the number $s\ +\ 3$ of its monomials satisfies:
\begin{equation*}
  s\ +\ 3\ \geq\ \Bigl(1\ +\ \frac{1}{n}\;\lo\frac{n^{\,c}}{\kappa^{\,(1\,-\,c)}}\Bigr)^{\,n\,-\,1}\ +\ 1
\end{equation*}
and its degree has the following lower bound
\begin{equation}\label{degreminni}
  m_{\,s}\ =\ \deg\,f\ \geq\ \biggl(\Bigl(1\ +\ \frac{1}{n}\;\lo\frac{n^{\,c}}{\kappa^{\,(1\,-\,c)}}\Bigr)^{\,n\,-\,1}\ -\ 1\biggr)\cdot(n\ -\ 1)\ +\ 1\,.
\end{equation}
\end{proposition}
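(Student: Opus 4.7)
The plan is to observe that any root $\alpha$ of the reciprocal noncyclotomic factor $B$ is also a root of $f$ itself (since $B \mid f$), and to extract information from the equation $f\,(\alpha)\ =\ 0$ by isolating the leading monomial $\alpha^{\,m_{\,s}}\,$. This kind of ``dominant monomial'' trick is standard for lacunary polynomials and converts the hypothesis $\abs{\alpha}\ \geq\ \rho\ >\ 1$ into an arithmetic lower bound on the number of terms.

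First I would verify the auxiliary inequality
\begin{equation*}
  1\ +\ \frac{1}{n}\;\lo\frac{n^{\,c}}{\kappa^{\,(1\,-\,c)}}\ \leq\ 1\ +\ (1\,-\,c)\,\Bigl(-\,\frac{\lo\kappa}{n}\Bigr)\ +\ c\,(\theta_{\,n}^{\,-1}\ -\ 1)\ =:\ \rho\,,
\end{equation*}
which, after cancellation, reduces to $\theta_{\,n}^{\,-1}\ \geq\ 1\ +\ \lo n\,/\,n$ (for $c\ >\ 0\,$; the $c\ =\ 0$ case is trivial). Via $\theta_{\,n}^{\,n}\ =\ 1\ -\ \theta_{\,n}\,$, this is equivalent to $(1\ -\ \lo n\,/\,n)^{\,n}\ \leq\ 1\ -\ \theta_{\,n}\,$, a statement one reads off from the asymptotic expansion of Theorem~\ref{thmthetan}; the restriction $n\ \geq\ 260$ ensures that the higher-order remainder in \eqref{DbetaAsymptoticExpression} does not spoil the inequality.

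For the core of the argument, rewrite $f\,(\alpha)\ =\ 0$ as
\begin{equation*}
  \alpha^{\,m_{\,s}}\ =\ 1\ -\ \alpha\ -\ \alpha^{\,n}\ -\ \alpha^{\,m_{\,1}}\ -\ \ldots\ -\ \alpha^{\,m_{\,s\,-\,1}}\,.
\end{equation*}
The right-hand side contains $s\ +\ 2$ monomials whose exponents (with the convention $m_{\,0}\ :=\ n$ to cover the case $s\ =\ 1$) are all bounded above by $m_{\,s\,-\,1}\,$. Since $\abs{\alpha}\ >\ 1\,$, the triangle inequality gives $\abs{\alpha}^{\,m_{\,s}}\ \leq\ (s\ +\ 2)\,\abs{\alpha}^{\,m_{\,s\,-\,1}}\,$. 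Dividing and invoking the lacunarity gap $m_{\,s}\ -\ m_{\,s\,-\,1}\ \geq\ n\ -\ 1$ yields $\abs{\alpha}^{\,n\,-\,1}\ \leq\ s\ +\ 2\,$; substituting the lower bound for $\abs{\alpha}$ from the first step then produces $s\ +\ 3\ \geq\ (1\ +\ \frac{1}{n}\,\lo\frac{n^{\,c}}{\kappa^{\,1\,-\,c}})^{\,n\,-\,1}\ +\ 1\,$. For the degree bound, I would telescope the gaps, $m_{\,s}\ =\ n\ +\ \sum_{\,j\,=\,1}^{\,s}(m_{\,j}\ -\ m_{\,j\,-\,1})\ \geq\ n\ +\ s\,(n\ -\ 1)\,$, and substitute the bound on $s$ just obtained to recover \eqref{degreminni}.

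The main obstacle is really concentrated in the first step: passing from the geometric quantity $\rho$ (involving the Perron number $\theta_{\,n}^{\,-1}$) to the simplified closed form $1\ +\ \frac{1}{n}\,\lo\frac{n^{\,c}}{\kappa^{\,(1\,-\,c)}}$ requires the fine-grained control of $\theta_{\,n}^{\,-1}\ -\ 1$ supplied by Theorem~\ref{thmthetan}, and motivates the numerical threshold $n\ \geq\ 260\,$. Everything else is the triangle inequality applied to a well-chosen rearrangement of $f\,(\alpha)\ =\ 0\,$, and uses only the lacunarity gap condition defining the class $\B\,$.
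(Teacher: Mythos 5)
Your strategy coincides with the paper's. The paper applies the \textsc{Mignotte}--\c{S}tef\u{a}nescu bound (Lemma~\ref{PROPmignotte}) to $f$ itself, obtaining that every root of $f$ --- hence the assumed root of the reciprocal noncyclotomic factor of modulus $>1$ --- has modulus at most $(s+2)^{1/k}$ with $k\ =\ m_{\,s}-m_{\,s-1}\ \geq\ n-1$; your rearrangement of $f(\alpha)=0$ followed by the triangle inequality is precisely an inline proof of that bound in this special case, and your telescoping of the gaps to get \eqref{degreminni} is identical to the paper's. That part of your argument is correct and needs no external lemma.

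The gap is in the auxiliary inequality of your first step. You reduce it correctly to $\theta_{\,n}^{\,-1}-1\ \geq\ \lo n/n$, but this inequality is \emph{false} throughout the range $n\ \geq\ 260$: by Theorem~\ref{thmthetan} and \eqref{eq:remark1} one has $1-\theta_{\,n}\ =\ \frac{1}{n}\bigl(\lo n-\lo\lo n+\frac{\lo\lo n}{\lo n}\bigr)+\cdots$, whence $\theta_{\,n}^{\,-1}-1\ \sim\ \frac{\lo n-\lo\lo n}{n}$, which is strictly below $\frac{\lo n}{n}$ (numerically, at $n=260$ one finds $\theta_{\,n}^{\,-1}-1\approx 0.0162$ against $\lo n/n\approx 0.0214$). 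So the asymptotic expansion yields the reverse of what you assert, and the threshold $n\geq 260$ works against you rather than for you; raising to the $(n-1)$-th power does not change the direction of the comparison, so the defect cannot be absorbed later. In fairness, the paper's own proof performs the same replacement of $c\,(\theta_{\,n}^{\,-1}-1)$ by $\frac{c\,\lo n}{n}$, invoking \eqref{eq:remark1} only ``to first order'' and without claiming an exact inequality --- you have correctly located the one delicate step of the argument, but the justification you offer for it does not hold. A clean repair is to keep $c\,(\theta_{\,n}^{\,-1}-1)$ in the final bounds, i.e.\ to replace $\lo\bigl(n^{\,c}/\kappa^{\,1-c}\bigr)$ by $-(1-c)\,\lo\kappa\ +\ c\,n\,(\theta_{\,n}^{\,-1}-1)$, or else to prove the statement with $\lo n$ weakened to $\lo n-\lo\lo n$.
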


\begin{proof}
The \textsc{Perron} number $\theta_{\,n}^{\,-1}$ is the dominant root of $-\,1\ +\ x\ +\ x^{\,n}\,$, and $\theta_{\,n\,-\,1}^{\,-1}$ of $-\,1\ +\ x\ +\ x^{\,n\,-\,1}\,$. Since $f\ \in\ \B_{\,n}\,$, $s\ \geq\ 1\,$, by Lemma~5.1 (ii) in \cite{Flatto1994} (\cf Section~\ref{S5.5}), the dominant (positive real) zero of $f^{\,*}\,(x)$ lies in the interval $(\theta_{\,n}^{\,-1},\; \theta_{\,n\,-\,1}^{\,-1})\,$. The (external) lenticulus of zeroes of $f^{\,*}\,(x)$ is defined as the image of that of $f$ by $z\ \longmapsto\ 1/z\,$. The existence of $c\ \in\ [\,0,\,1\,]$ and a reciprocal noncylotomic factor vanishing at the zeroes of the subcollection of the lenticulus of $f$ defined by $c\,$, implies that this reciprocal noncylotomic factor also vanishes at the zeroes of the lenticulus of $f^{\,*}\,$, external to the unit disk, in the same proportion.

\begin{lemma}[Mignotte -- \c{S}tef\u{a}nescu \cite{Mignotte1999}]\label{PROPmignotte}
Let
\begin{equation*}
  P\,(x)\ =\ x^{\,q}\ +\ a_{\,q\,-\,k}\,x^{\,q\,-\,k}\ +\ \ldots\ +\ a_{\,1}\,x\ +\ a_{\,0}\ \in\ \Z\,[x]\ \setminus\ \Z\,.
\end{equation*}
Then the moduli of the roots of $P\,(x)$ are bounded by
\begin{equation}\label{mignottestef}
  \bigl(\,\abs{a_{\,0}}\ +\ \abs{a_{\,1}}\ +\ \ldots\ +\ \abs{a_{\,q\,-\,k}}\,\bigr)^{1/k}\,.
\end{equation}
\end{lemma}

The number of monomials in $f\ \in\ \B_{\,n}\,$, $n\ \geq\ 2\,$, is equal to $s\ +\ 3\,$. Then the sum $\abs{a_{\,0}}\ +\ \abs{a_{\,1}}\ +\ \ldots\ +\ \abs{a_{\,q\,-\,k}}$ of Proposition~\ref{PROPmignotte}, applied to $P\,(x)\ \equiv\ f\,(x)$ with $q\ =\ m_{\,s}\,$, is equal to $s\ +\ 2\,$, and $k$ is $\geq\ n\ -\ 1\,$. If we assume that $f$ contains an irreducible reciprocal noncyclotomic factor $B$ having a root of modulus $\geq\ 1\ +\ (1\ -\ c)\cdot\Bigl(-\,\frac{\lo\kappa} {n}\Bigr)\ +\ c\cdot(\theta_{\,n}^{\,-\,1}\ -\ 1)\,$, for some $0\ \leq\ c\ \leq\ 1$ then we should have, by Lemma~\ref{PROPmignotte} and by Equation~\eqref{eq:remark1},
\begin{equation*}
  (s\ +\ 2)^{\,1/k}\ \geq\ 1\ +\ \frac{1}{n}\;\Bigl(-\,\lo\kappa^{\,(1\,-\,c)}\ +\ c\,\lo n\,\Bigr)\,.
\end{equation*}
Therefore,
\begin{equation*}
  \frac{1}{k}\;\lo(s\ +\ 2)\ \geq\ \lo\Bigl(1\ +\ \frac{1}{n}\;\lo\frac{n^{\,c}}{\kappa^{\,(1\,-\,c)}}\Bigr)\,,
\end{equation*}
which implies
\begin{equation*}
  \lo(s\ +\ 2)\ \geq\ \lo\biggl(\Bigl(1\ +\ \frac{1}{n}\;\lo\frac{n^{\,c}}{\kappa^{\,(1\,-\,c)}}\Bigr)^{\,n\,-\,1}\biggr)
\end{equation*}
and the result. Moreover,
\begin{multline*}
  m_{\,s}\ =\ (m_{\,s}\ -\ m_{\,s\,-\,1})\ +\ (m_{\,s\,-\,1}\ -\ m_{\,s\,-\,2})\ +\ \ldots\ +\ (m_{\,2}\ -\ m_{\,1})\ +\ (m_{\,1}\ -\ n)\ +\\
  (n\ -\ 1)\ +\ 1\ \geq\ (s\ +\ 1)\cdot(n\ -\ 1)\ +\ 1\,,
\end{multline*}
from which \eqref{degreminni} is deduced.
\end{proof}

\begin{example}
Let $f\ \in\ \B_{\,n}\,$, with $n\ =\ 400\,$, for which it is assumed that there exists a reciprocal noncyclotomic factor of $f$ vanishing on the subcollection of roots of the lenticulus of $f$ given by $c\ =\ 0.95\,$. Then, by \eqref{degreminni}, the degree $m_{\,s}$ of $f$ should be above $121\,786\,$. 
\end{example}

The case where the summit (real $> 1$) of the lenticulus of zeroes of $f^{\,*}$ is a zero of a reciprocal noncyclotomic factor of $f$ never occurs by the following Proposition.

\begin{proposition}\label{descartesrule}
If $f\,(x)\ :=\ -\,1\ +\ x\ +\ x^{\,n}\ +\ x^{\,m_{\,1}}\ +\ x^{\,m_{\,2}}\ +\ \ldots\ +\ x^{\,m_{\,s}}\ \in\ \B_{\,n}\,$, $s\ \geq\ 1\,$, $n\ \geq\ 3\,$, is factorized as $f\,(x)\ =\ A\,(x)\cdot B\,(x)\cdot C\,(x)$ as in Theorem~\ref{thm1factorization}. Then, the unique positive real root of $f\,(x)$ is a root of the nonreciprocal part $C\,(x)\,$.
\end{proposition}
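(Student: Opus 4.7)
The plan is to argue by Descartes' rule of signs combined with the pairing of roots of reciprocal polynomials. First I would recall that when the coefficients of $f(x)$ are listed in order of increasing degree we read $-1,+1,0,\ldots,0,+1,0,\ldots,0,+1,\ldots,+1\,$, so the sign sequence exhibits exactly one sign change. Descartes' rule therefore guarantees that $f$ has exactly one positive real zero, call it $\beta_{\,0}\ \in\ (0,\,1)\,$ (which by Theorem~\ref{thm1factorization}(ii) is $1/\beta\,$).

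Next I would verify that this unique positive real root cannot lie on the cyclotomic factor $A\,(x)\,$. Since the roots of $A$ lie on $\T\,$, the only candidate for a positive real zero of $A$ would be $x\ =\ 1\,$, but the direct computation $f\,(1)\ =\ -1\ +\ 1\ +\ 1\ +\ s\ =\ s\ +\ 1\ \geq\ 2$ (for $s\ \geq\ 1$) shows $1$ is not a root of $f\,$. Hence $A\,(\beta_{\,0})\ \neq\ 0\,$.

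I would then rule out $B\,(x)\,$. Suppose for contradiction that $B\,(\beta_{\,0})\ =\ 0\,$. Because $B$ is reciprocal and $B\,(\beta_{\,0})\ =\ 0\,$, also $B\,(1/\beta_{\,0})\ =\ 0\,$, so $1/\beta_{\,0}$ is a further positive real zero of $f\,$. Since $\beta_{\,0}\ \in\ (0,\,1)$ we have $1/\beta_{\,0}\ \neq\ \beta_{\,0}\,$, which produces two distinct positive real zeros of $f\,$, contradicting the count obtained from Descartes' rule. Therefore $\beta_{\,0}$ must be a root of the remaining factor $C\,(x)\,$, which is the desired conclusion.

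This proof is essentially immediate once the observation about sign changes is made, so no real obstacle is expected; the only subtlety is the preliminary check $f\,(1)\ \neq\ 0$ (needed so that $\beta_{\,0}\ \neq\ 1$ in the reciprocal-pairing step), which is trivially verified from the explicit form of $f\,$.
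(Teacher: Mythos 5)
Your proof is correct and follows essentially the same route as the paper: Descartes' rule of signs gives exactly one positive real zero, and the reciprocal pairing $\beta_{\,0}\mapsto 1/\beta_{\,0}$ rules out the factor $B$. You are in fact slightly more careful than the paper's own proof, since you also explicitly exclude the cyclotomic part $A$ via the observation $f\,(1)\ =\ s\ +\ 1\ \neq\ 0\,$.
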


\begin{proof}
By \textsc{Descartes}'\,s rule the number of positive real roots of $f$ should be less than the number of sign changes in the sequence of coefficients of the polynomials $f\,$. The number of sign changes in $f$ is $1\,$. If say $1/\beta$ is the unique root of $f$ in $(0,\,1)\,$, and assumed to be a root of a factor of $B$ then $\beta$ and $1/\beta\ \neq\ \beta$ would be two real roots of $f\,$, what is impossible.
\end{proof}

%%% ----------------------------------------------------------------------- %%%

\section[Lenticuli of zeroes: an example with $s\ =\ 12\,$, and various pentanomials]{Lenticuli of zeroes: an example with $s\ =\ 12\,$, and various pentanomials (with $s\ =\ 2$)}
\label{S3}

In this paragraph let us examplify the fact that the roots of any
\begin{equation*}
  f\,(x)\ :=\ -\,1\ +\ x\ +\ x^{\,n}\ +\ x^{\,m_{\,1}}\ +\ x^{\,m_{\,2}}\ +\ \ldots\ +\ x^{\,m_{\,s}}\,,
\end{equation*}
where $s\ \geq\ 1\,$, $m_{\,1}\ -\ n\ \geq\ n\ -\ 1\,$, $m_{\,q\,+\,1}\ -\ m_{\,q}\ \geq\ n\ -\ 1$ for $1\ \leq\ q\ <\ s\,$, $n\ \geq\ 3\,$, are separated into two parts, those which lie in a narrow annular neighbourhood of the unit circle, and those forming a lenticulus of roots $\omega$ inside an angular sector $-\,\gamma\ <\ \arg\,\omega\ <\ \gamma$ with $\gamma$ say $<\ \pi/3$ off the unit circle. This dichotomy phenomenon becomes particularly visible when $n$ and $s$ are large. This lenticulus is shown to be a deformation of the lenticulus determined by the trinomial $-\,1\ +\ x\ +\ x^{\,n}$ made of the first three terms of $f\,$; the lenticulus of zeroes of $-\,1\ +\ x\ +\ x^{\,n}$ is constituted by the zeroes of real part $>\ 1/2\,$, equivalently which lie in the angular sector $-\,\pi/3\ <\ \arg\,(z)\ <\ \pi/3\,$, symmetrically with respect to the real axis, for which the number of roots is equal to $1\ +\ 2\,\lfloor\,n/6\,\rfloor$ \cite[Prop.~3.7]{Verger-Gaugry2016}.

The value of $\gamma$ is taken equal to $\pi/18$ as soon as $n$ is large enough, due to the structure of the asymptotic expansions of the roots of $G_{\,n}$ \cite{Verger-Gaugry2016}, so that the number of roots of the lenticulus of roots of $f$ can be asymptotically defined by the formula
\begin{equation}\label{nombreasumptoLENTICULE}
  1\ +\ \biggl\lfloor\,\frac{1}{3}\,\Bigl\lfloor\,\frac{n}{6}\,\Bigr\rfloor\,\biggr\rfloor\ \pm\ 1\,.
\end{equation}
At small values of $n\,$, the value of $\gamma\ =\ \pi/18$ is also kept as a critical threshold to estimate the number of elements in the lenticulus of roots of $f$ by \eqref{nombreasumptoLENTICULE}. It can be shown \cite{Verger-Gaugry2018} that the lenticulus of roots of $f$ is a set of zeroes of the nonreciprocal irreducible factor in the factorization of $f\,$. Even though it seems reasonable to expect many roots of $f$ on the unit circle, it is not the case: all the roots $\alpha$ of the nonreciprocal irreducible component of $f\,(x)$ are never on the unit circle: $\abs{\alpha}\ \neq\ 1\,$, as proved in Proposition~\ref{neverunitcircle}.

%%% ----------------------------------------------------------------------- %%%

\bigskip
\paragraph{(i) Example of a polynomial in $\B_{\,37}$ with $s\ =\ 12$}:

Let the polynomial $f\,(x)$ de defined as
\begin{multline}\label{649G20}
  f\,(x)\ :=\ -1\ +\ x\ +\ x^{\,37}\ +\ x^{\,81}\ +\ x^{\,140}\ +\ x^{\,184}\ +\ x^{\,232}\ +\ x^{\,285}\ +\ x^{\,350}\ +\ x^{\,389}\\ 
  +\ x^{\,450}\ +\ x^{\,590}\ +\ x^{\,649}\ \equiv\ G_{\,37}\,(x)\ +\ x^{\,81}\ +\ x^{\,140}\ +\ x^{\,184}\ +\ x^{\,232}\ +\ x^{\,285}\\ 
  +\ x^{\,350}\ +\ x^{\,389}\ +\ x^{\,450}\ +\ x^{\,590}\ +\ x^{\,649}
\end{multline}
The zeroes are represented in Figure~\ref{example649}(\textit{b}), those of
$G_{\,37}\,(x)\ =\ -\,1\ +\ x\ +\ x^{\,37}$ in Figure~\ref{example649}(\textit{a}). The polynomial $f$ is irreducible. The zeroes of $f\,(x)$ are either lenticular or lie very close to the unit circle. The lenticulus of zeroes of $f$ contains $3$ zeroes, compared to $13$ for the cardinal of the lenticulus of zeroes of the trinomial $G_{\,37}\,(x)\,$. It is obtained by a slight deformation of the restriction of the lenticulus of zeroes of $G_{\,37}\,(x)$ to the angular sector $\abs{\arg\,z}\ <\ \pi/18\,$.

\begin{figure}
  \begin{center}
    \subfigure[]{\includegraphics[width=0.48\textwidth]{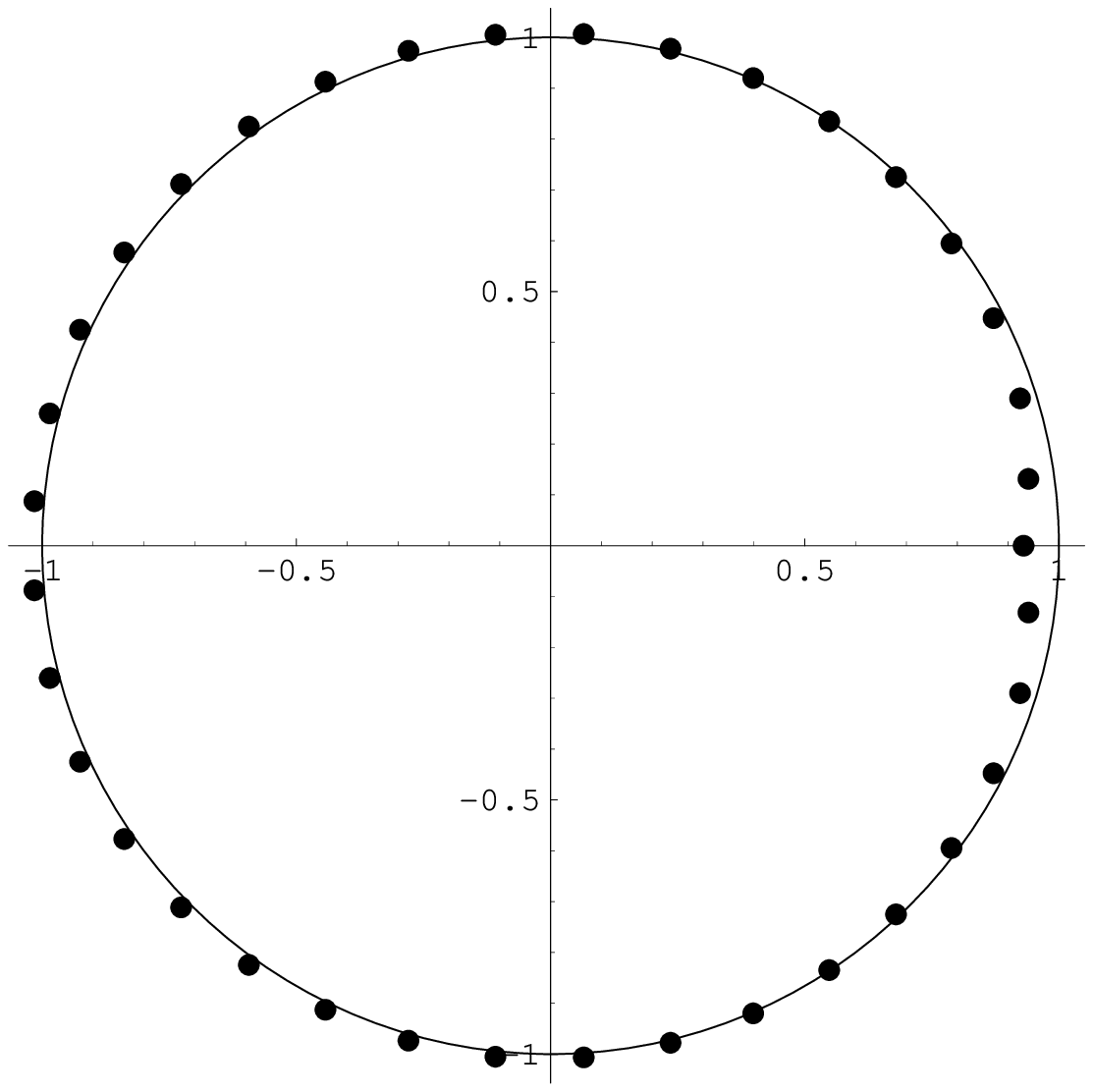}}
    \subfigure[]{\includegraphics[width=0.48\textwidth]{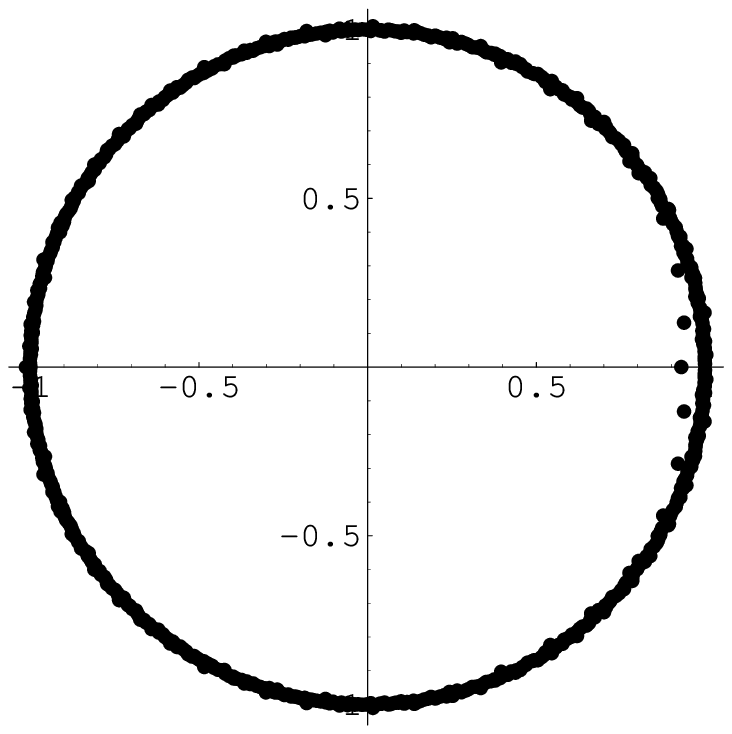}}
  \end{center}
  \caption{\small\em (a) The $37$ zeroes of $G_{37}(x)\ =\ -\,1\ +\ x\ +\ x^{\,37}\,$; (b) The $649$ zeroes of $f\,(x)\ =\ G_{37}\,(x)\ +\ \ldots\ +\ x^{\,649}$ given in Equation~\eqref{649G20}. The lenticulus of roots of $f$ (having $3$ simple zeroes) is obtained by a very slight deformation of the restriction of the lenticulus of roots of $G_{\,37}\,(x)$ to the angular sector $\abs{\arg\,z}\ <\ \pi/18\,$, off the unit circle. The other roots (nonlenticular) of $f$ can be found in a narrow annular neighbourhood of $\abs{z}\ =\ 1\,$.}
  \label{example649}
\end{figure}

%%% ----------------------------------------------------------------------- %%%

\bigskip
\paragraph{(ii) Examples of pentanomials ($s\ =\ 2$)}

The examples show different factorizations of polynomials $f\ \in\ \B_{\,n}$ for various values of $n\,$, having a small number of roots in their lenticulus of roots; in many examples the number of factors is small (one, two or three). The last examples exhibit polynomials $f\ \in\ \B$ having a larger number of zeroes in the lenticuli of roots ($5\,$, $7$ and $27$). Denser lenticuli of roots (for $n\ \geq\ 1\,000$ for instance) are difficult to visualize graphically for the reason that the lenticuli of roots are extremely close to the unit circle, and \emph{apparently} become embedded in the annular neighbourhood of the nonlenticular roots.

\begin{enumerate}
  \item Dynamical degree $n = 5\,$. Let $f_{\,1}\,(x)\ =\ -\,1\ +\ x\ +\ x^{\,5}\ +\ x^{\,9}\ +\ x^{\,15}\,$. It is reducible and its factorization admits only one irreducible cyclotomic factor, the second factor being irreducible nonreciprocal:
  \begin{multline*}
    f_{\,1}\,(x)\ =\ (1\ +\ x\ +\ x^{\,2})\cdot(-\,1\ +\ 2\,x\ -\ x^{\,2}\ -\ x^{\,3}\ +\ 2\,x^{\,4}\ -\ 2\,x^{\,6}\ +\\ 
    2\,x^{\,7}\ -\ x^{\,9}\ +\ x^{\,10}\ -\ x^{\,12}\ +\ x^{\,13})\,.
  \end{multline*}
  Let $f_{\,2}\,(x)\ =\ -\,1\ +\ x\ +\ x^{\,5}\ +\ x^{\,9}\ +\ x^{\,18}\,$. In the factorization of $f_{\,2}\,(x)$ two irreducible cyclotomic factors appear and where the third factor is irreducible and nonreciprocal:
  \begin{multline*}
    f_{\,2}\,(x)\ =\ (1\ -\ x\ +\ x^{\,2})\cdot(1\ +\ x\ +\ x^{\,2})\cdot(-\,1\ +\ x\ +\ x^{\,2}\ -\ x^{\,3}\ +\ x^{\,5}\\
    -\ x^{\,6}\ +\ x^{\,8}\ -\ x^{\,12}\ +\ x^{\,14})\,.
  \end{multline*}
  In both cases, the lenticulus of zeroes of $f_{\,1,\,2}\,(x)$ is the lenticulus of its nonreciprocal factor. It is reduced to the unique real positive zero of $f_{\,1,\,2}\,$: $0.7284\ldots\,$, resp. $0.7301\ldots\,$, close to real positive zero $0.7548\ldots$ of $G_{\,5}\,(x)$ which is the only element of the lenticulus of roots of $G_{\,5}\,(x)\,$.

  \item Dynamical degree $n\ =\ 12\,$. The lenticulus of zeroes of $G_{\,12}\,(x)$ is shown in Figure~\ref{lt05}(\textit{a}) and Figure~\ref{lt02}(\textit{a}). It contains $5$ zeroes. Let $f_{\,1}\,(x)\ =\ -\,1\ +\ x\ +\ x^{\,12}\ +\ x^{\,23}\ +\ x^{\,35}\,$, resp. $f_{\,2}\,(x)\ =\ -\,1\ +\ x\ +\ x^{\,12}\ +\ x^{\,250}\ +\ x^{\,385}\,$. Both polynomials are irreducible. In both cases the lenticulus of zeroes of $f\,(x)$ (Figures~\ref{lt05}(\textit{b}), \ref{lt02}(\textit{b})) only contains one point, the real root $0.8447\ldots\,$, resp. $0.8525\ldots\,$, close to the real positive zero $0.8525\ldots$ of $G_{\,12}\,(x)\,$: the lenticulus of $f$ is a slight deformation of the restriction of the lenticulus of $G_{\,12}\,(x)$ to the angular sector $\abs{\arg\,z}\ <\ \pi/18\,$. Comparing Figure~\ref{lt05}(\textit{b}) and Figure~\ref{lt02}(\textit{b}), the higher degree of $f\,$, $385$ instead of $35\,$, has two consequences:
  \begin{enumerate}
    \item the densification of the annular neighbourhood of $\abs{z}\ =\ 1$ by the zeroes of $f\,(x)\,$,
    \item the decrease of the thickness of the annular neighbourhood containing the nonlenticular roots of $f\,(x)\,$. This phenomenon is general (\cf Section~\ref{S5.4}).
  \end{enumerate}

  \item Dynamical degree $n\ =\ 81\,$. Let $f\,(x)\ =\ -\,1\ +\ x\ +\ x^{\,81}\ +\ x^{\,165}\ +\ x^{\,250}\,$. It is irreducible. The lenticulus of zeroes of $G_{\,81}\,(x)$ contains $27$ points (Figure~\ref{lt06}(\textit{a})), while that of $f\,(x)$ (Figure~\ref{lt06}(\textit{b})) contains $5$ points, in particular the real root $0.9604\ldots\,$, close to the real positive root $0.9608\ldots$ of $G_{\,81}\,(x)\,$.

  \item Dynamical degree $n\ =\ 121\,$. Let $f\,(x)\ =\ -\,1\ +\ x\ +\ x^{\,121}\ +\ x^{\,250}\ +\ x^{\,385}\,$. It is irreducible. The lenticulus of zeroes of $G_{\,121}\,(x)$ contains $41$ points (see Figure~\ref{lt07}(\textit{a})), whereas the lenticulus of roots of $f\,(x)$ (Figure~\ref{lt07}(\textit{b})) contains $7$ points, in particular the real root $0.9709\ldots\,$, close to the real positive root $0.971128\ldots$ of $G_{\,121}\,(x)\,$.

\end{enumerate}

\begin{figure}
  \begin{center}
    \subfigure[]{\includegraphics[width=0.48\textwidth]{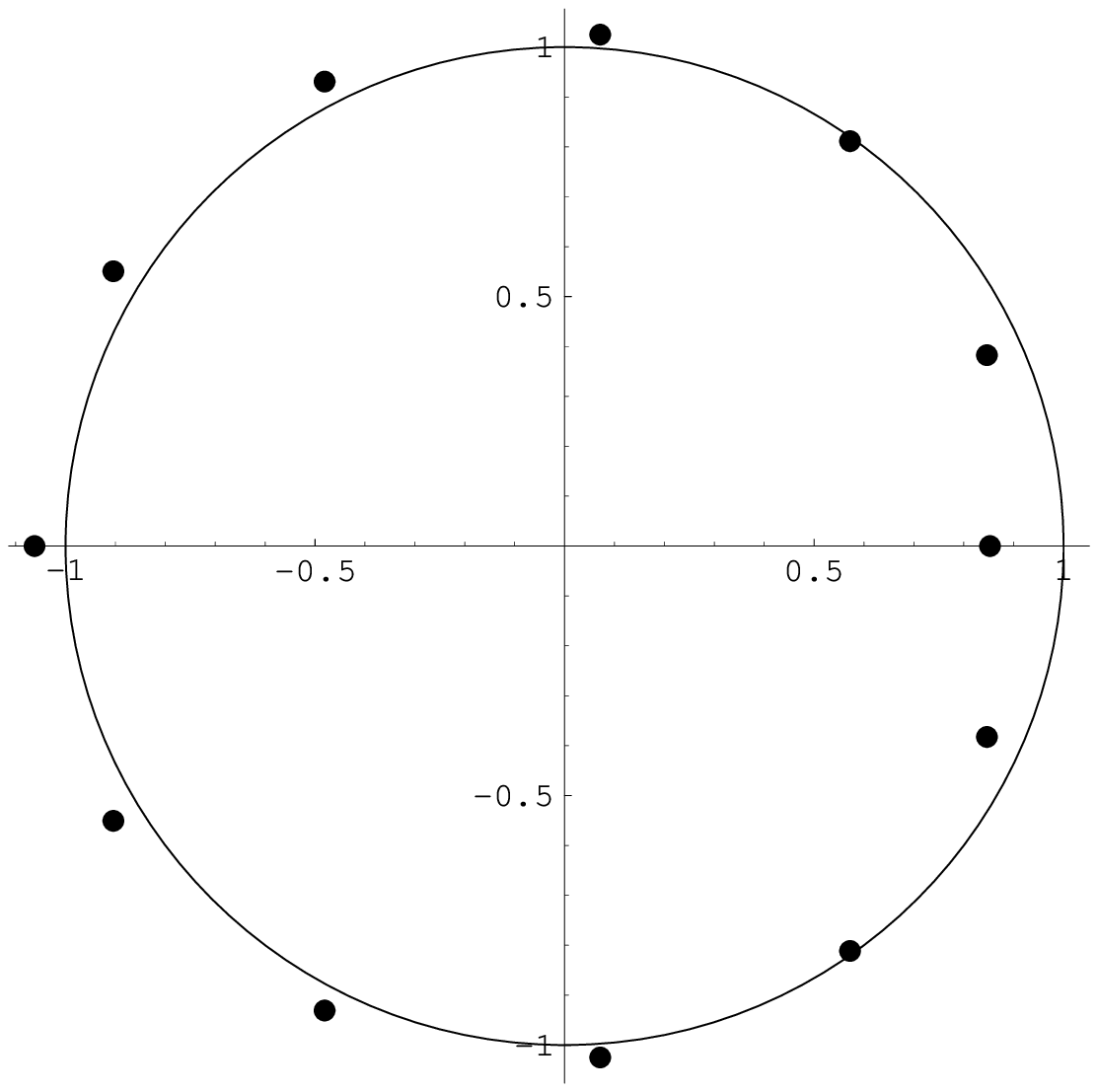}}
    \subfigure[]{\includegraphics[width=0.48\textwidth]{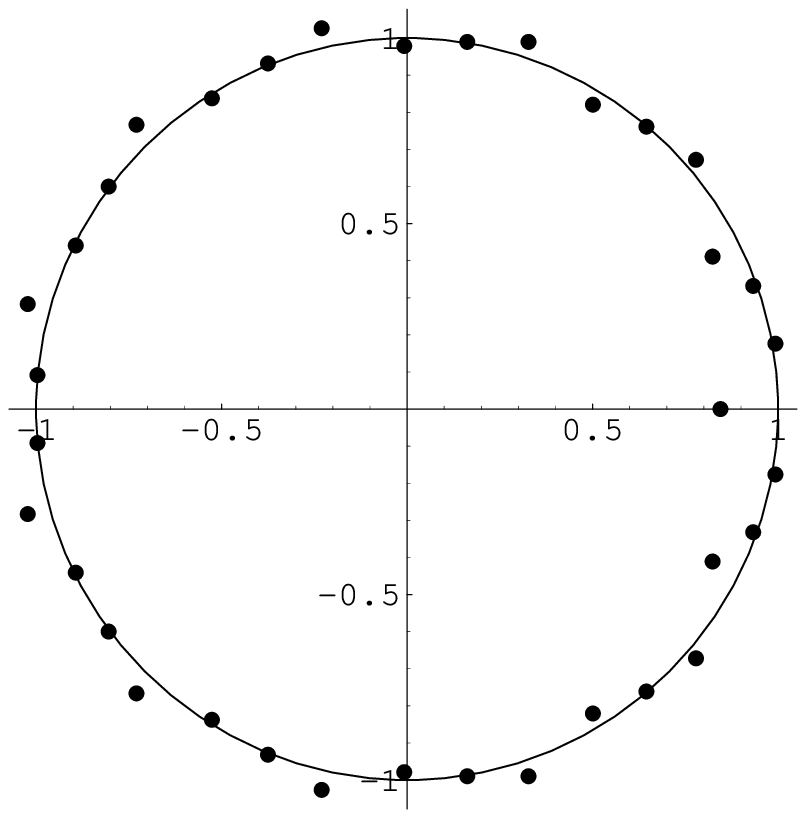}}
  \end{center}
  \caption{\small\em (a) The $12$ zeroes of $G_{\,12}\,(x)\,$; (b) The $35$ simple zeroes of $f\,(x)\ =\ -\,1\ +\ x\ +\ x^{\,12}\ +\ x^{\,23}\ +\ x^{\,35}\,$. By definition, only one root is lenticular, the one on the real axis, though the ``complete'' lenticulus of roots of $-\,1\ +\ x\ +\ x^{\,12}\,$, slightly deformed, can be guessed.}
  \label{lt05}
\end{figure}

\begin{figure}
  \begin{center}
    \subfigure[]{\includegraphics[width=0.48\textwidth]{figs/trinomial_conjg_X12X1m1.eps}}
    \subfigure[]{\includegraphics[width=0.48\textwidth]{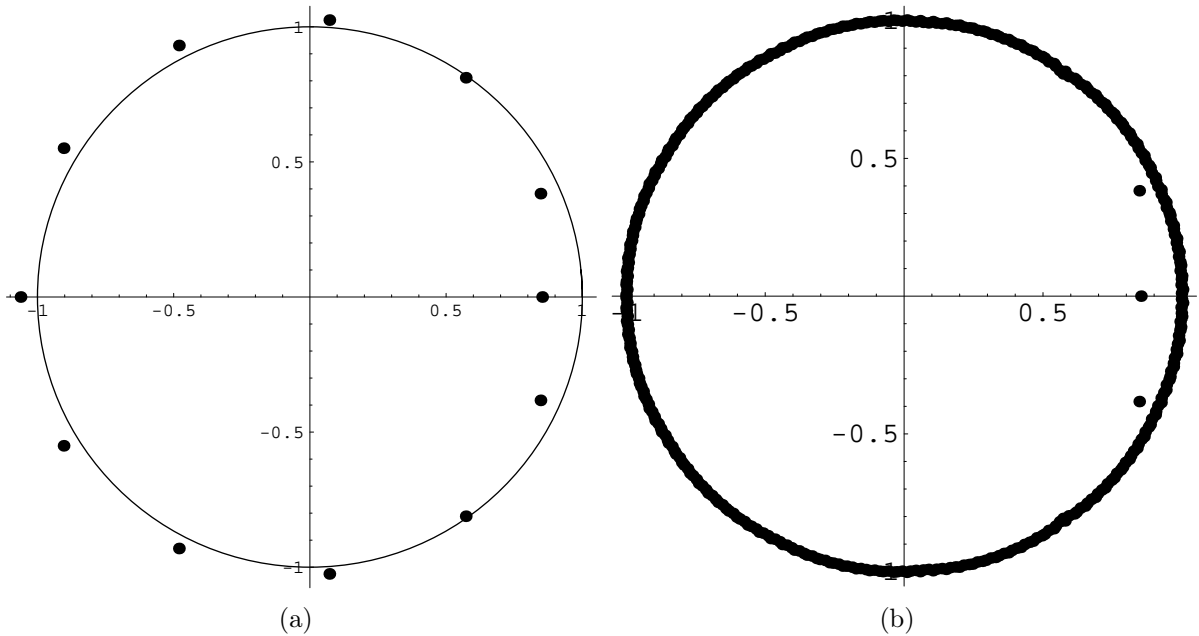}}
  \end{center}
  \caption{\small\em (a) The $12$ zeroes of $G_{\,12}\,(x)\,$; (b) The $385$ zeroes of $f\,(x)\ =\ -\,1\ +\ x\ +\ x^{\,12}\ +\ x^{\,250}\ +\ x^{\,385}\,$. The lenticulus of roots of the trinomial $G_{\,12}\,(x)\ =\ -\,1\ +\ x\ +\ x^{\,12}$ can be guessed, slightly deformed and almost ``complete''. It is well separated from the other roots, and off the unit circle. Only one root of $f$ is considered as a lenticular zero, the one on the real axis: $0.8525\ldots\,$. The thickness of the annular neighbourhood of $\abs{z}\ =\ 1$ which contains the nonlenticular zeroes of $f\,(x)$ is much smaller than in Figure~\ref{lt05}(b).}
  \label{lt02}
\end{figure}

\begin{figure}
  \begin{center}
    \subfigure[]{\includegraphics[width=0.5\textwidth]{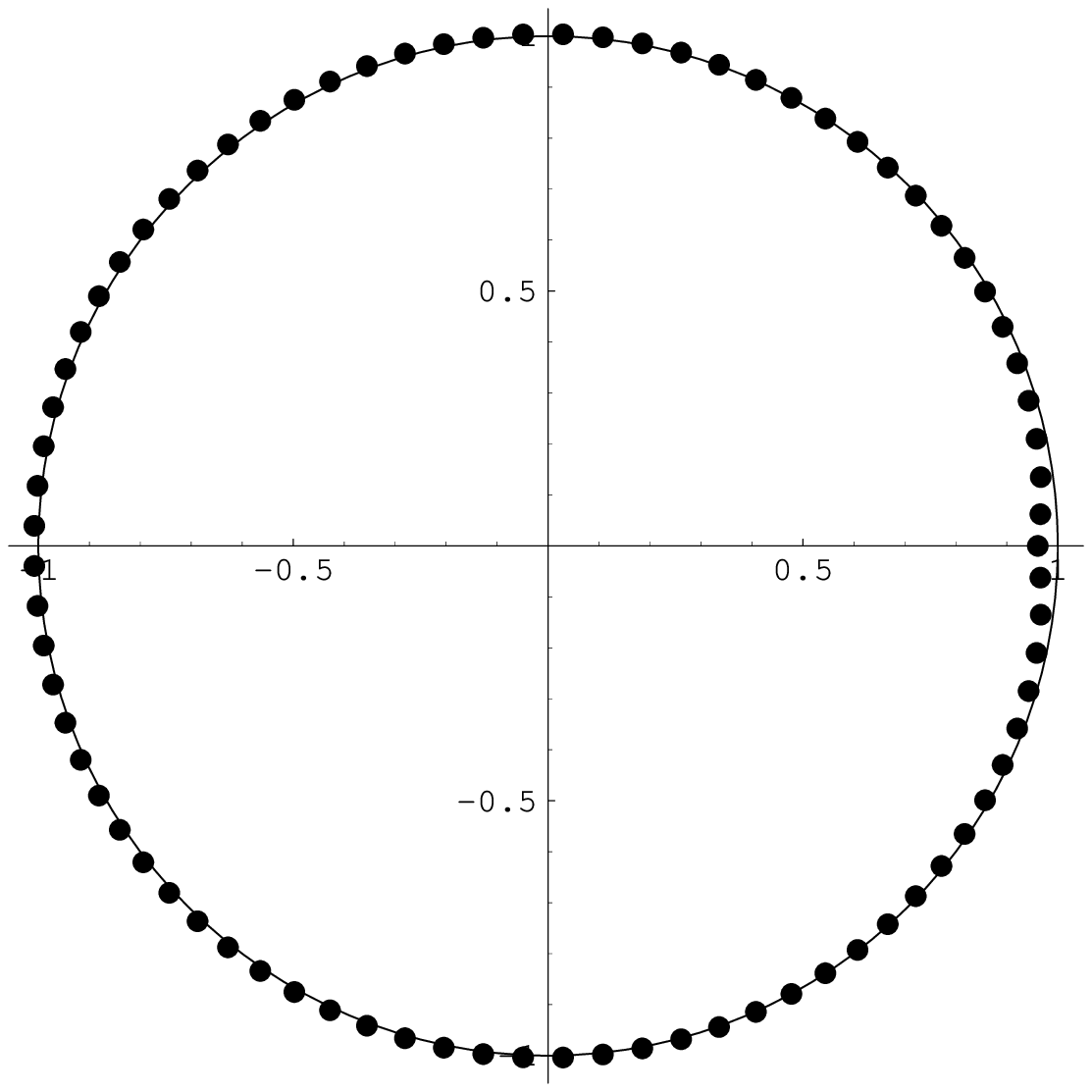}}
    \subfigure[]{\includegraphics[width=0.95\textwidth]{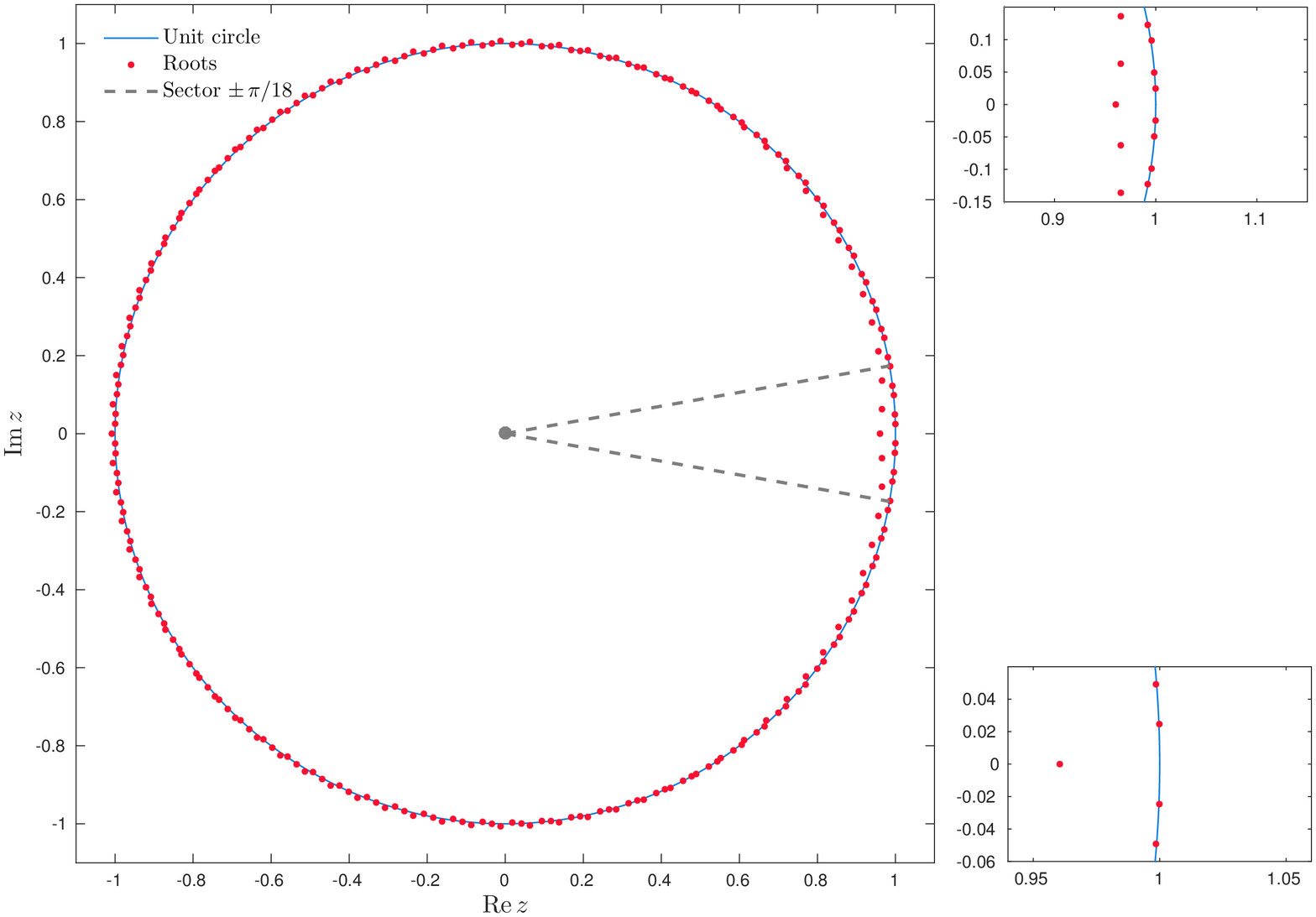}}
  \end{center}
  \caption{\small\em (a) Zeroes of $G_{\,81}\,(x)\,$; (b) Zeroes of $f\,(x)\ =\ -\,1\ +\ x\ +\ x^{\,81}\ +\ x^{\,165}\ +\ x^{\,250}\,$. On the right the distribution of the roots of $f\,(x)$ is zoomed twice in the angular sector $-\,\pi/18\ <\ \arg\,(z)\ <\ \pi/18\,$. The number of lenticular roots of $f\,(x)$ is equal to $5\,$.}
  \label{lt06}
\end{figure}

\begin{figure}
  \begin{center}
    \subfigure[]{\includegraphics[width=0.5\textwidth]{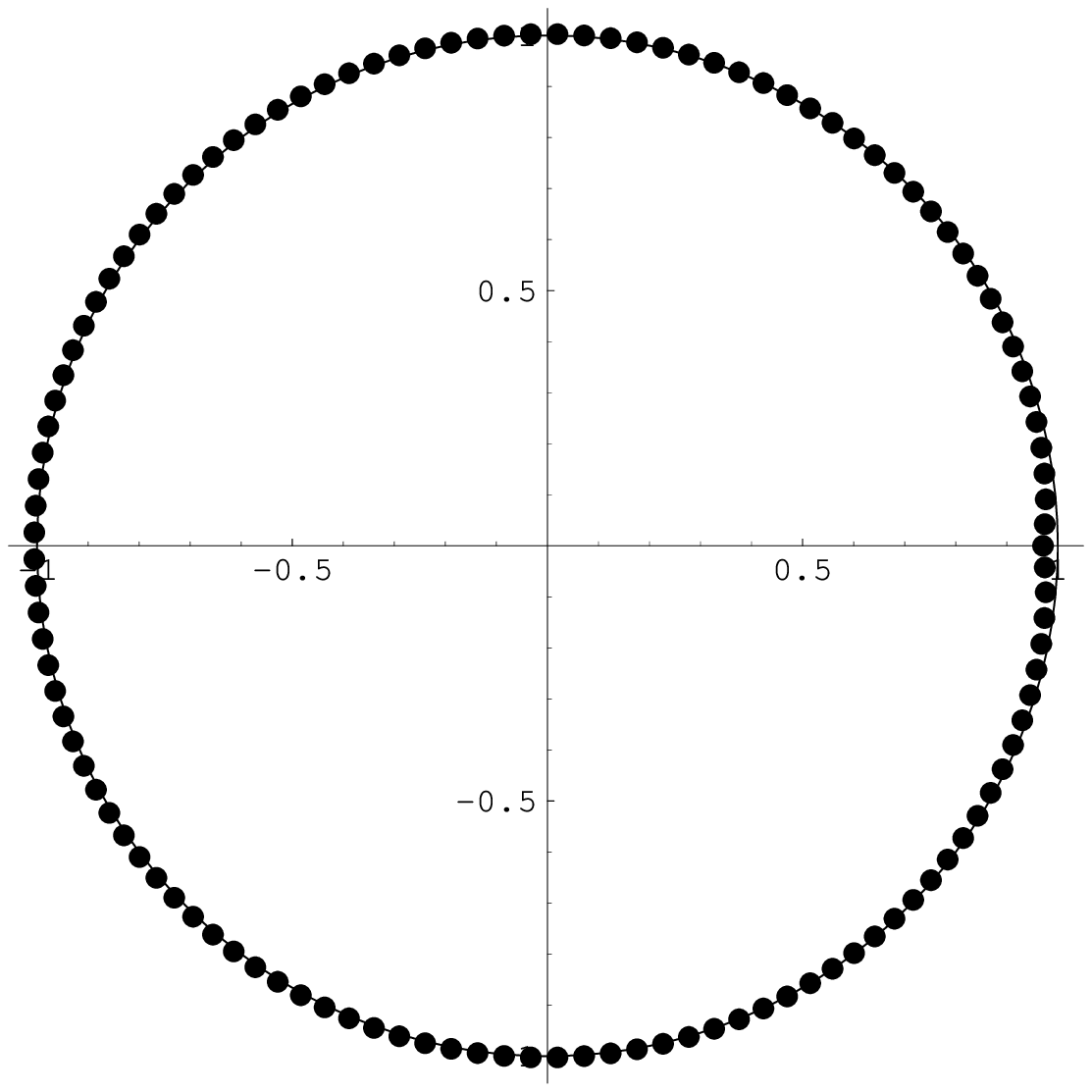}}
    \subfigure[]{\includegraphics[width=0.95\textwidth]{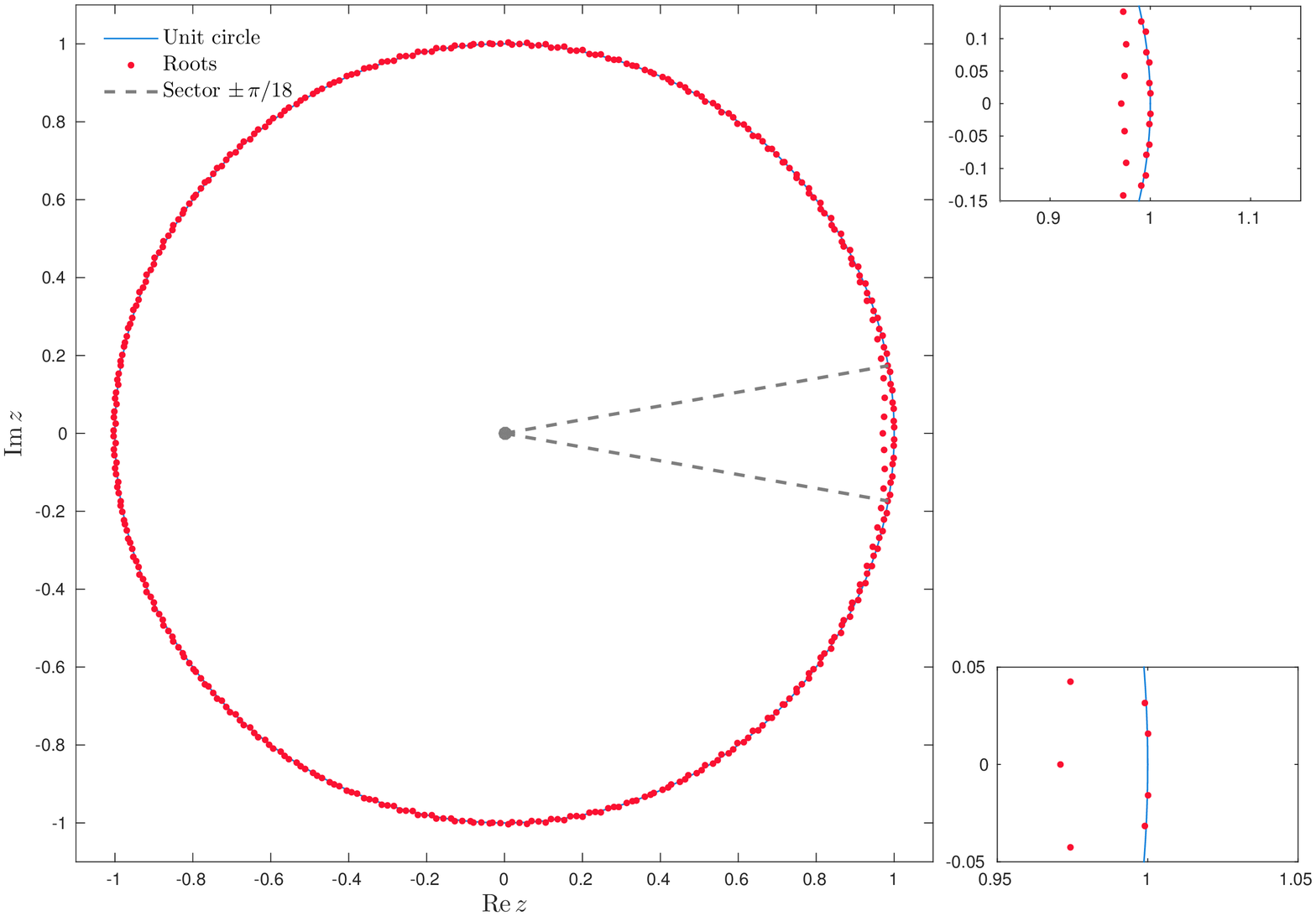}}
  \end{center}
  \caption{\small\em (a) Zeroes of $G_{\,121}\,(x)\,$; (b) Zeroes of $f\,(x)\ =\ -\,1\ +\ x\ +\ x^{\,121}\ +\ x^{\,250}\ +\ x^{\,385}\,$. On the right the distribution of the roots of $f\,(x)$ is zoomed twice in the angular sector $-\,\pi/18\ <\ \arg\,(z)\ <\ \pi/18\,$. The lenticulus of roots of $f\,(x)$ has $7$ zeroes.}
  \label{lt07}
\end{figure}

%%% ----------------------------------------------------------------------- %%%

\section{Factorization of the lacunary polynomials of class $\B$}
\label{S5}

In a series of papers \textsc{Schinzel} \cite{Schinzel1969, Schinzel1978, Schinzel1976, Schinzel1983} has studied the reducibility of lacunary polynomials, their possible factorizations, the asymptotics of their numbers of irreducible factors, reciprocal, nonreciprocal, counted with multiplicities or not, for large degrees. \textsc{Dobrowolsky} \cite{Dobrowolski1979} has also contributed in this domain in view of understanding the problem of \textsc{Lehmer}. First let us deduce the following Theorem on the class $\B\,$, from \textsc{Schinzel}'\,s Theorems.

\begin{theorem}
Suppose $f\ \in\ \B$ of the form
\begin{equation*}
  f\,(x)\ =\ -\,1\ +\ x\ +\ x^{\,n}\ +\ x^{\,m_{\,1}}\ +\ \ldots\ +\ x^{\,m_{\,s}}\,, \quad n\ \geq\ 2\,, \quad s\ \geq\ 1\,.
\end{equation*}
Then the number $\omega\,(f)\,$, resp. $\omega_{\,1}\,(f)\,$, of irreducible factors, resp. of irreducible noncyclotomic factors, of $f\,(x)$ counted without multiplicities in both cases, satisfy
\begin{itemize}
  \item \begin{equation*}
    \omega\,(f)\ \ll\ \sqrt{\frac{m_{\,s}\,\lo(s\,+\,3)}{\lo\lo m_{\,s}}}\,, \qquad m_{\,s}\ \to\ \infty\,,
  \end{equation*}
  \item for every $\epsilon\ \in\ (0,\,1)\,$,
  \begin{equation*}
    \omega_{\,1}\,(f)\ =\ o\,\left(m_{\,s}^{\,\epsilon}\right)\cdot\bigl(\lo(s\ +\ 3)\bigr)^{\,1\,-\,\epsilon}\,, \qquad m_{\,s}\ \to\ \infty\,.
  \end{equation*}
\end{itemize}
\end{theorem}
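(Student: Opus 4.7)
The plan is to deduce both estimates as direct consequences of \textsc{Schinzel}'s bounds on the number of irreducible factors of a lacunary integer polynomial, applied to $f$ with the combinatorial parameters $N\,:=\,s\,+\,3$ (number of nonzero monomials) and $d\,:=\,m_{\,s}$ (degree). The spacing hypotheses $m_{\,1}\,-\,n\,\geq\,n\,-\,1$ and $m_{\,q\,+\,1}\,-\,m_{\,q}\,\geq\,n\,-\,1$ (with $n\,\geq\,2$) force the exponents $0,\,1,\,n,\,m_{\,1},\ldots,m_{\,s}$ to be pairwise distinct, so $f$ indeed has exactly $s+3$ nonzero terms, all with coefficients in $\{-1,\,0,\,1\}$; in particular the height $H(f)\,=\,1$ is uniformly bounded and only absolute implied constants appear in the sequel.

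For the first bound I would invoke \textsc{Schinzel}'s theorem on the number of distinct irreducible factors of a lacunary polynomial from \cite{Schinzel1983} (together with the sharpenings in \cite{Schinzel1978}), which asserts that for any $P\,\in\,\Z[x]$ of degree $d$ with $N$ nonzero terms,
\begin{equation*}
\omega(P)\ \ll\ \sqrt{\frac{d\,\lo N}{\lo\lo d}},\qquad d\,\to\,\infty,
\end{equation*}
the implied constant depending only on an upper bound of $H(P)$. Substituting $P\,=\,f$, $d\,=\,m_{\,s}$, $N\,=\,s+3$ yields the stated bound on $\omega(f)$. For $\omega_{\,1}(f)$ I would apply \textsc{Schinzel}'s finer theorem (\cite{Schinzel1969, Schinzel1978}) which separates the cyclotomic from the noncyclotomic contribution and gives, for every $\epsilon\,\in\,(0,\,1)$,
\begin{equation*}
\omega_{\,1}(P)\ =\ o(d^{\,\epsilon})\cdot(\lo N)^{\,1\,-\,\epsilon},\qquad d\,\to\,\infty;
\end{equation*}
specialising again to $f$ produces the second estimate.

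The main obstacle, essentially the only nontrivial step, will be to pin down the precise form of each \textsc{Schinzel} bound so that (i) the implied constants depend only on the height (which here equals $1$ and can therefore be absorbed into absolute constants), and (ii) the sparsity parameter is literally $N\,=\,s+3$ rather than some coarser invariant of the coefficient vector of $f$ (such as $d$ itself or a Mahler measure). Once the two cited inequalities are formulated in these terms, no further structural property of the class $\B$ is used beyond the enumeration $(N,\,d)\,=\,(s+3,\,m_{\,s})$, and the theorem follows by direct substitution.
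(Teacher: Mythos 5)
Your proposal matches the paper's proof, which consists precisely of citing Schinzel's Theorems 1 and 2 (with the ``Note added in proof'') from \cite{Schinzel1983} and substituting the parameters of $f$; note only that for $f\in\B$ the quantity $\lo(s+3)$ can equally be read as $\lo\norm{f}^{2}$, since the Euclidean norm squared coincides with the number of monomials, which is how the sparsity enters Schinzel's bounds. No further argument is given in the paper, so your identification of the substitution $(N,d)=(s+3,m_{s})$ as the whole content of the proof is accurate.
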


\begin{proof}
Theorem~1 and Theorem~2, with the ``\emph{Note added in proof}'' in \textsc{Schinzel} \cite[p.~319]{Schinzel1983}.
\end{proof}

%%% ----------------------------------------------------------------------- %%%

\subsection{Cyclotomic parts}
\label{S5.1}

Let us first mention some results on the existence of cyclotomic factors in the factorization of the polynomials of the class $\B\,$. Then, in Proposition~\ref{beaucoup}, we prove the existence of infinitely many polynomials $f\ \in\ \B$ which are divisible by a given cyclotomic polynomial $\Phi_{\,p}\,$, for every prime number $p\ \geq\ 3\,$.

\begin{lemma}\label{lemmaexiste}
Suppose $f\ \in\ \B$ of the form
\begin{equation*}
  f\,(x)\ =\ -\,1\ +\ x\ +\ x^{\,n}\ +\ x^{\,m_{\,1}}\ +\ \ldots\ +\ x^{\,m_{\,s}}\,, \quad n\ \geq\ 2\,, \quad s\ \geq\ 1\,,
\end{equation*}
and divisible by a cyclotomic polynomial. Then there is an integer $m\ =\ p_{\,1}^{\,q_{\,1}}\cdot\, \ldots\,\cdot p_{\,r}^{\,q_{\,r}}$ having all its prime factors $p_{\,i}\ \leq\ s\ +\ 3$ such that $\Phi_{\,m}\,(x)$ divides $f\,(x)\,$.
\end{lemma}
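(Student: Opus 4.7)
The plan is to prove the lemma by an iterative stripping procedure: starting from any cyclotomic factor $\Phi_{\,M}\,(x)$ of $f\,$, I repeatedly remove from $M$ a prime $p\ >\ s\ +\ 3\,$, each step producing a strictly smaller index $M'$ for which $\Phi_{\,M'}\,(x)$ still divides $f\,(x)\,$. The iteration terminates with an integer $m\ |\ M$ whose prime factors are all $\leq\ s\ +\ 3\,$. The degenerate possibility $m\ =\ 1$ is excluded a priori, because $f\,(1)\ =\ -\,1\ +\ 1\ +\ 1\ +\ s\ =\ s\ +\ 1\ \geq\ 2\,$, so that $\Phi_{\,1}\,(x)\ =\ x\ -\ 1$ never divides $f\,(x)\,$.

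The heart of the argument is the following one-step reduction: \emph{if $p\ >\ s\ +\ 3$ is a prime with $p^{\,a}\,\parallel\,M\,$ and $M\ =\ p^{\,a}\,m_{\,0}\,$, $\gcd\,(p,\,m_{\,0})\ =\ 1\,$, then $\Phi_{\,m_{\,0}}\,(x)$ divides $f\,(x)\,$.} Let $\alpha$ be a primitive $p^{\,a}$-th root of unity and $\beta$ a primitive $m_{\,0}$-th root, so $\omega\ :=\ \alpha\,\beta$ is a primitive $M$-th root of unity. From $\Phi_{\,M}\,|\,f$ I obtain
\begin{equation*}
0\ =\ f\,(\omega)\ =\ \sum_{k\,=\,0}^{s\,+\,2}\ c_{\,k}\ \alpha^{\,e_{\,k}}\ \beta^{\,e_{\,k}}\,,
\end{equation*}
with $(e_{\,0},\,\ldots,\,e_{\,s\,+\,2})\ =\ (0,\,1,\,n,\,m_{\,1},\,\ldots,\,m_{\,s})\,$, $c_{\,0}\ =\ -\,1$ and $c_{\,k}\ =\ +\,1$ for $k\ \geq\ 1\,$. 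Reducing each $\alpha^{\,e_{\,k}}\ =\ \alpha^{\,e_{\,k}\,\bmod\,p^{\,a}}$ and grouping by the residue $\ell\ :=\ e_{\,k}\,\bmod\,p^{\,a}$ converts the above into the polynomial identity
\begin{equation*}
P\,(\alpha)\ :=\ \sum_{\ell\,=\,0}^{p^{\,a}\,-\,1}\ D_{\,\ell}\ \alpha^{\,\ell}\ =\ 0\,,
\qquad
D_{\,\ell}\ =\ \sum_{k\,:\,e_{\,k}\,\equiv\,\ell\,(\mathrm{mod}\,p^{\,a})}\ c_{\,k}\ \beta^{\,e_{\,k}}\ \in\ \Q\,(\beta)\,.
\end{equation*}

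The minimal polynomial of $\alpha$ over $\Q\,(\beta)$ is $\Phi_{\,p^{\,a}}\,(x)\ =\ 1\ +\ x^{\,p^{\,a-1}}\ +\ x^{\,2\,p^{\,a-1}}\ +\ \ldots\ +\ x^{\,(p\,-\,1)\,p^{\,a-1}}\,$, so $P\,(x)\ =\ \Phi_{\,p^{\,a}}\,(x)\cdot Q\,(x)$ for some $Q\ \in\ \Q\,(\beta)\,[x]$ with $\deg\,Q\ <\ p^{\,a-1}\,$. Expanding this product shows that $D_{\,\ell}$ depends only on $\ell\,\bmod\,p^{\,a-1}\,$: for each $t\ \in\ \{0,\,\ldots,\,p^{\,a-1}\,-\,1\}\,$, the $p$ values $D_{\,t},\,D_{\,t\,+\,p^{\,a-1}},\,\ldots,\,D_{\,t\,+\,(p\,-\,1)\,p^{\,a-1}}$ coincide. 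The crucial pigeonhole step then uses $p\ >\ s\ +\ 3\,$: among the $p$ residues $t,\,t\,+\,p^{\,a-1},\,\ldots,\,t\,+\,(p\,-\,1)\,p^{\,a-1}$ modulo $p^{\,a}\,$, at least one is missed by every one of the $s\ +\ 3$ exponents $e_{\,k}\,$, so the corresponding $D_{\,\bullet}$ is an empty sum equal to $0\,$; the equality constraint then forces $D_{\,t\,+\,j\,p^{\,a-1}}\ =\ 0$ for every $j\,$. Doing this for each $t\,$, $D_{\,\ell}\ =\ 0$ for every $\ell\,$; summing yields $0\ =\ \sum_{\ell}\,D_{\,\ell}\ =\ \sum_{k}\,c_{\,k}\,\beta^{\,e_{\,k}}\ =\ f\,(\beta)\,$, and therefore $\Phi_{\,m_{\,0}}\,(x)\ |\ f\,(x)$ as claimed.

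The main obstacle is to execute this cleanly when $a\ \geq\ 2\,$: the minimal polynomial $\Phi_{\,p^{\,a}}\,(x)$ is then not the dense $\Phi_{\,p}\,(x)$ but the lacunary $\Phi_{\,p}\,(x^{\,p^{\,a-1}})\,$, which necessitates the careful bookkeeping of the expansion $\Phi_{\,p^{\,a}}\,(x)\,Q\,(x)$ and the two-layer partition of the $s\ +\ 3$ exponents by residues modulo $p^{\,a-1}$ and $p^{\,a}$ simultaneously. For $a\ =\ 1$ the argument collapses to ``all $D_{\,\ell}$ are equal'' and the pigeonhole concludes in one line. Granted the reduction, iterated application strips all primes $>\ s\ +\ 3$ from $M\,$, and the resulting $m\ \geq\ 2$ satisfies the conclusion of the lemma.
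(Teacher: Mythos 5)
Your proof is correct, but note that the paper does not actually prove this lemma: its entire ``proof'' is a citation to Lemma~3.2 of \cite{Filaseta2006}. What you have written is essentially a self-contained reconstruction of the standard argument behind that cited result (the vanishing-sums-of-roots-of-unity technique going back to Mann and Schinzel): factor $\omega\ =\ \alpha\,\beta$ with $\alpha$ a primitive $p^{\,a}$-th and $\beta$ a primitive $m_{\,0}$-th root of unity, use that $\Phi_{\,p^{\,a}}\,(x)\ =\ \Phi_{\,p}\,(x^{\,p^{\,a-1}})$ is the minimal polynomial of $\alpha$ over $\Q\,(\beta)$ to force the coefficients $D_{\,\ell}$ to be constant on residue classes modulo $p^{\,a-1}$, and then pigeonhole with $p\ >\ s\ +\ 3$ to kill them all, whence $f\,(\beta)\ =\ \sum_{\ell}\,D_{\,\ell}\ =\ 0$. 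All the individual steps check out: the degree count $\deg Q\ \leq\ p^{\,a-1}\ -\ 1$ makes the exponent ranges of the blocks $x^{\,j\,p^{\,a-1}}\,Q\,(x)$ disjoint, so $D_{\,j\,p^{\,a-1}\,+\,t}\ =\ q_{\,t}$ indeed holds; the empty-class argument is valid because for fixed $t$ at most $s\ +\ 3\ <\ p$ exponents can fall in the $p$ classes above $t$; and the degenerate case $m_{\,0}\ =\ 1$ is correctly ruled out by $f\,(1)\ =\ s\ +\ 1\ \neq\ 0$, which also guarantees the stripping iteration never terminates at $m\ =\ 1$. One stylistic difference from the usual formulation: the classical reduction removes a single factor of $p$ per step (concluding $\Phi_{\,M/p}\ \vert\ f$), whereas you remove the full power $p^{\,a}$ in one step; your variant is slightly stronger per step and makes termination of the iteration immediate, at the cost of the extra bookkeeping for $a\ \geq\ 2$ that you correctly identify and handle.
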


\begin{proof}
Lemma~3.2 in \cite{Filaseta2006}.
\end{proof}

The divisibility of $f\ \in\ \B$ by cyclotomic polynomials $\Phi_{\,p}\,(x)\,$, where $p$ are prime numbers, implies a condition on those $p\,$'\,s by the following Proposition~\ref{prop5.4}.

\begin{lemma}[Boyd]\label{boydlemma}
Let $p$ be a prime number. Suppose $f\ \in\ \B$ of the form
\begin{equation*}
  f\,(x)\ =\ \sum_{\,j\,=\,0}^{\,m_{\,s}} a_{\,j}\,x^{\,j}\ =\ -\,1\ +\ x\ +\ x^{\,n}\ +\ x^{\,m_{\,1}}\ +\ \ldots\ +\ x^{\,m_{\,s}}\,, \quad n\ \geq\ 2\,, \quad s\ \geq\ 1\,.
\end{equation*}
Denote $c_{\,i}\ =\ \sum_{\,k\,\equiv\,i\,(p)} a_{\,k}\,$. Then,
\begin{equation*}
  \Phi_{\,p}\,(x)\ \bigl\vert\ f(x)\ \Longleftrightarrow\ c_{\,0}\ =\ c_{\,1}\ =\ \ldots\ =\ c_{\,p\,-\,1}\,.
\end{equation*}
\end{lemma}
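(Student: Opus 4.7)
The plan is to reduce the divisibility question modulo $x^{\,p}\ -\ 1\,$, using the factorization $x^{\,p}\ -\ 1\ =\ (x\ -\ 1)\,\Phi_{\,p}\,(x)$ valid for $p$ prime, together with the elementary fact that $\deg\,\Phi_{\,p}\ =\ p\ -\ 1\,$.

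First I would observe that since $x^{\,p}\ \equiv\ 1\ \pmod{x^{\,p}\,-\,1}\,$, one has $x^{\,k}\ \equiv\ x^{\,k \bmod p}\ \pmod{x^{\,p}\,-\,1}$ for every integer $k\ \geq\ 0\,$. Grouping the monomials of $f$ according to the residue class of their exponent modulo $p\,$, this gives the congruence
\begin{equation*}
  f\,(x)\ \equiv\ \sum_{\,i\,=\,0}^{\,p\,-\,1}\,c_{\,i}\,x^{\,i}\ =:\ P\,(x)\ \pmod{x^{\,p}\ -\ 1}\,,
\end{equation*}
with the coefficients $c_{\,i}$ defined in the statement. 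Since $\Phi_{\,p}\,(x)$ divides $x^{\,p}\ -\ 1\,$, the divisibility $\Phi_{\,p}\,(x)\,\bigl\vert\,f\,(x)$ is equivalent to $\Phi_{\,p}\,(x)\,\bigl\vert\,P\,(x)\,$.

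Next, since $\deg\,P\ \leq\ p\ -\ 1\ =\ \deg\,\Phi_{\,p}\,$, the divisibility $\Phi_{\,p}\,\bigl\vert\,P$ forces $P\,(x)\ =\ c\cdot\Phi_{\,p}\,(x)$ for some constant $c\ \in\ \Z\,$ (including $c\ =\ 0$ when $P\ =\ 0$). Writing $\Phi_{\,p}\,(x)\ =\ 1\ +\ x\ +\ x^{\,2}\ +\ \ldots\ +\ x^{\,p\,-\,1}\,$, an identification of coefficients yields $c_{\,0}\ =\ c_{\,1}\ =\ \ldots\ =\ c_{\,p\,-\,1}\ =\ c\,$. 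The converse is immediate: if all the $c_{\,i}$ are equal to a common value $c\,$, then $P\,(x)\ =\ c\cdot\Phi_{\,p}\,(x)\,$, hence $\Phi_{\,p}\,\bigl\vert\,P\,$, hence $\Phi_{\,p}\,\bigl\vert\,f$ by the reduction above.

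No step looks particularly delicate; the only point requiring mild care is that the argument uses the primality of $p$ in two places, namely to ensure $\Phi_{\,p}\,(x)\ =\ 1\ +\ x\ +\ \ldots\ +\ x^{\,p\,-\,1}$ and that $\deg\,\Phi_{\,p}\ =\ p\ -\ 1\,$, so that the comparison of degrees of $P$ and $\Phi_{\,p}$ is tight. The specific form of $f\ \in\ \B$ (its exponents and coefficient pattern) plays no role in the proof; the lemma is in fact a statement about arbitrary integer polynomials, which is why it can be invoked uniformly in the sequel for any element of the class $\B\,$.
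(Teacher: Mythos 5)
Your proof is correct and rests on the same mechanism as the paper's (one-line) proof, namely reducing the exponents of $f$ modulo $p$, i.e.\ working modulo $x^{\,p}\ -\ 1\,$; the paper phrases this as ``$\Phi_{\,p}\,(x)\ \vert\ f\,(x)$ iff $(x^{\,p}\ -\ 1)\ \vert\ (x\ -\ 1)\cdot f\,(x)$'' and leaves the coefficient comparison implicit, whereas you compare the reduced polynomial $P$ directly with $c\cdot\Phi_{\,p}$ via the degree bound. The difference is cosmetic, and your observation that the lemma holds for arbitrary integer polynomials is accurate.
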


\begin{proof}
$\Phi_{\,p}\,(x)$ divides $f\,(x)$ if and only if $(x^{\,p}\ -\ 1)$ divides $(x\ -\ 1)\cdot f\,(x)\,$.
\end{proof}

\begin{proposition}\label{prop5.4}
Suppose $f(x) \in \mathcal{B}$ of the form
\begin{equation*}
  f\,(x)\ =\ \sum_{\,j\,=\,0}^{\,m_{\,s}}\, a_{\,j}\,x^{\,j}\ =\ -\,1\ +\ x\ +\ x^{\,n}\ +\ x^{\,m_{\,1}}\ +\ \ldots\ +\ x^{\,m_{\,s}}\,, \quad n\ \geq\ 2\,, \quad s\ \geq\ 1
\end{equation*}
and that $\Phi_{\,p}\,(x)\ \bigl\vert\ f\,(x)$ for some prime number $p\,$. Then
\begin{equation*}
  p\ \bigl\vert\ (s\ +\ 1)\,.
\end{equation*}
\end{proposition}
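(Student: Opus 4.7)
The plan is to invoke Boyd's Lemma together with an evaluation of $f$ at $x=1$. By Boyd's Lemma, the divisibility $\Phi_p(x) \mid f(x)$ is equivalent to the equality of the residue-class sums $c_0 = c_1 = \cdots = c_{p-1}$, where $c_i := \sum_{k \equiv i \,(\mathrm{mod}\,p)} a_k$. So the whole question reduces to extracting an arithmetic consequence from this equality.

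First I would compute $\sum_{i=0}^{p-1} c_i$. By the definition of the $c_i$'s, this sum is just $\sum_{j=0}^{m_s} a_j = f(1)$. For $f \in \mathcal{B}$ of the stated shape, the nonzero coefficients are $a_0 = -1$, $a_1 = 1$, $a_n = 1$, and $a_{m_1} = \cdots = a_{m_s} = 1$, so $f(1) = -1 + 1 + 1 + s = s+1$.

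Next, under the hypothesis $\Phi_p \mid f$, Boyd's Lemma gives a common value $c := c_0 = c_1 = \cdots = c_{p-1}$. Then $\sum_{i=0}^{p-1} c_i = p \, c = s + 1$. Since $c \in \mathbb{Z}$, this forces $p \mid (s+1)\,$, which is exactly the claim.

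There is essentially no obstacle: the argument is a one-line consequence of Boyd's criterion applied to the total sum of coefficients. The only thing to verify carefully is the count $f(1) = s+1$, which follows directly from the definition of the class $\mathcal{B}$ (one contribution of $-1$ from the constant term, and $s+2$ contributions of $+1$ from the monomials $x$, $x^n$, $x^{m_1}, \ldots, x^{m_s}$).
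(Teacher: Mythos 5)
Your argument is correct and is exactly the proof given in the paper: apply Boyd's Lemma to get $c_0 = \cdots = c_{p-1}$, then observe $f(1) = s+1 = \sum_i c_i = p\,c_0$, whence $p \mid (s+1)$. Nothing further is needed.
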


\begin{proof}
Using Lemma~\ref{boydlemma}, since $f\,(1)\ =\ s\ +\ 1\ =\ \sum_{\,k}\, a_{\,k}\ =\ \sum_{\,i\,=\,0}^{p\,-\,1}\,\sum_{\,k\,\equiv\,i\,(p)} a_{\,k}\ =\ p \cdot c_{\,0}\,$, we deduce the claim.
\end{proof}

A necessary condition for $f\,(x)$ to be divisible by $\Phi_{\,p}\,(x)$ is that $s$ should be congruent to $-1$ modulo $p\,$.

\begin{proposition}\label{beaucoup}
Let $p\ \geq\ 3$ be a prime number. Let $n\ \geq\ 2\,$. There exist infinitely many $f\ \in\ \B_{\,n}$ such that
\begin{equation*}
  \Phi_{\,p}\,(x)\ \bigl\vert\ f\,(x)\,.
\end{equation*}
\end{proposition}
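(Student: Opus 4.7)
The plan is to invoke Lemma~\ref{boydlemma}, which rephrases $\Phi_{\,p}\,(x)\ \bigl\vert\ f\,(x)$ as the equality $c_{\,0}\ =\ c_{\,1}\ =\ \ldots\ =\ c_{\,p\,-\,1}$ of the column sums, and then to produce admissible $f\ \in\ \B_{\,n}$ by directly prescribing the residues of the exponents modulo $p\,$. Set $r\ :=\ n\ \bmod\ p\,$. The three forced monomials $-\,1\,$, $x\,$, $x^{\,n}$ produce initial column sums $c_{\,i}^{\,(0)}$ equal to: $-\,1\ +\ 1\ =\ 0$ at $i\ =\ 0$ if $r\ =\ 0\,$, $1\ +\ 1\ =\ 2$ at $i\ =\ 1$ if $r\ =\ 1\,$, and $1$ at $i\ =\ r$ if $r\ \notin\ \{0,\,1\}\,$; the remaining entries are $c_{\,0}^{\,(0)}\ =\ -\,1\,$, $c_{\,1}^{\,(0)}\ =\ 1$ and $c_{\,i}^{\,(0)}\ =\ 0$ elsewhere. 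Each further monomial $x^{\,m_{\,j}}$ adds $1$ to $c_{\,m_{\,j}\,\bmod\,p}\,$. Choose the common target $C\ =\ 1$ if $r\ \neq\ 1$ and $C\ =\ 2$ if $r\ =\ 1\,$, and let $e_{\,i}\ :=\ C\ -\ c_{\,i}^{\,(0)}\ \geq\ 0$ denote the prescribed number of $m_{\,j}$'\,s with residue $i\,$. A short case analysis shows $s\ =\ \sum_{\,i}\, e_{\,i}\ \in\ \{p\ -\ 1,\,2\,p\ -\ 1\}\,$, in agreement with Proposition~\ref{prop5.4}.

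Next I realise this multiset of residues as a gap-admissible strictly increasing sequence $m_{\,1}\ <\ \ldots\ <\ m_{\,s}$ by a greedy loop. Fix any ordering $\rho_{\,1},\,\ldots,\,\rho_{\,s}$ of the multiset; set $L_{\,1}\ :=\ 2\,n\ -\ 1$ and $L_{\,j}\ :=\ m_{\,j\,-\,1}\ +\ n\ -\ 1$ for $j\ \geq\ 2\,$; take $m_{\,j}$ to be the unique integer in the window $[\,L_{\,j},\,L_{\,j}\ +\ p\ -\ 1\,]$ with $m_{\,j}\ \equiv\ \rho_{\,j}\ \pmod p\,$. Since any $p$ consecutive integers contain exactly one representative of every residue class, the loop never aborts; the gap conditions defining $\B_{\,n}$ hold by construction, the column sums of the resulting $f$ all equal $C\,$, and Lemma~\ref{boydlemma} yields $\Phi_{\,p}\ \bigl\vert\ f\,$.

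Finally, I generate infinitely many admissible $f$ by an extension step. Given one $f\ \in\ \B_{\,n}$ divisible by $\Phi_{\,p}$ with largest exponent $m_{\,s}\,$, pick an arbitrary $M\ \geq\ m_{\,s}\ +\ n\ -\ 1$ with $M\ \equiv\ 0\ \pmod p$ and append $p$ new exponents $m_{\,s\,+\,1}\ =\ M\ <\ m_{\,s\,+\,2}\ <\ \ldots\ <\ m_{\,s\,+\,p}$ produced by the same greedy rule so that their residues cycle once through $\{0,\,1,\,\ldots,\,p\ -\ 1\}\,$. Every $c_{\,i}$ is incremented by exactly $1\,$, so the column-sum equality persists; letting $M$ range over arbitrarily large multiples of $p$ yields infinitely many distinct $f$'\,s in $\B_{\,n}$ divisible by $\Phi_{\,p}\,$. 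The only delicate point is checking the non-negativity of the $e_{\,i}$'\,s in the sub-case $r\ =\ 1$ (which forces the larger choice $C\ =\ 2$); the greedy realisation and the iterative extension are then routine, since any window of length $p$ already meets every residue class.
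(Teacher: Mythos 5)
Your proof is correct and follows essentially the same route as the paper: reduce divisibility by $\Phi_{\,p}$ to the equal-column-sum criterion of Lemma~\ref{boydlemma}, complete the residue classes of the exponents modulo $p$ by adjoining suitable monomials, and generate infinitely many examples by appending $p$ further exponents whose residues run once through all classes. Your explicit bookkeeping (the targets $C$, the counts $e_{\,i}$ with $s\ \in\ \{p\,-\,1,\ 2\,p\,-\,1\}$, and the length-$p$ window argument guaranteeing the gap conditions of $\B_{\,n}$) merely makes rigorous the step the paper phrases as ``we can complete the classes by suitably adding terms.''
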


\begin{proof}
Let $\zeta_{\,p}$ denote the primitive root of unity $\ue^{\,2\,\ui\,\pi\,/\,p}\,$. Let us assume that $f\,(x)\ \in\ \B_{\,n}$ vanishes at $\zeta_{\,p}\,$, as
\begin{equation*}
  f\,(\zeta_{\,p})\ =\ -\,1\ +\ \zeta_{\,p}\ +\ \zeta_{\,p}^{\,n}\ +\ \zeta_{\,p}^{\,m_{\,1}}\ +\ \ldots\ +\ \zeta_{\,p}^{\,m_{\,s}}\ =\ 0\,.
\end{equation*}
We consider the residues modulo $p$ of the $(s\ +\ 1)-$tuple $(n,\; m_{\,1},\; m_{\,2},\;\ldots,\;m_{\,s})$ so that $f\,(\zeta_{\,p})$ can be written
\begin{multline*}
  f\,(\zeta_{\,p})\ =\ c_{\,0}\ +\ c_{\,1}\,\zeta_{\,p}\ +\ c_{\,2}\,\zeta_{\,p}^{\,2}\ +\ \ldots\ +\ c_{\,p\,-\,1}\,\zeta_{\,p}^{\,p\,-\,1}\ =\ 0\,, \\ 
  c_{\,0},\, c_{\,1},\, \ldots,\, c_{\,p\,-\,1}\ \in\ \Z\,.
\end{multline*}
The polynomial $\Phi_{\,p}\,(x)\ =\ 1\ +\ x\ +\ x^{\,2}\ +\ \ldots\ +\ x^{\,p\,-\,1}\ =\ (x^{\,p}\ -\ 1)\,/\,(x\ -\ 1)$ is the minimal polynomial of $\zeta_{\,p}\,$. Then, if $c_{\,0}$ or $c_{\,p\,-\,1}$ is equal to $0\,$, then all the coefficients $c_{\,i}$ should be equal to $0$ since $\{1,\;\zeta_{\,p},\;\zeta_{\,p}^{\,2},\;\ldots,\;\zeta_{\,p}^{\,p\,-\,2}\}$ is a free system over $\Z\,$. If $c_{\,0}\cdot c_{\,p\,-\,1}\ \neq\ 0$ then the equalities
\begin{equation*}
  c_{\,0}\ =\ c_{\,1}\ =\ c_{\,2}\ =\ \ldots\ =\ c_{\,p\,-\,1}\ \neq\ 0
\end{equation*}
should hold since the polynomial $\sum_{\,j\,=\,0}^{\,p\,-\,1}\,c_{\,j}\,x^{\,j}$ vanishes at $\zeta_{\,p}$ and is of the same degree as $\Phi_{\,p}\,(x)\,$. The common value can be arbitrarily large. In both cases we have the condition
\begin{equation*}
  c_{\,0}\ =\ c_{\,1}\ =\ c_{\,2}\ =\ \ldots\ =\ c_{\,p\,-\,1}\,.
\end{equation*}
It means that the distribution of the exponents $n\,$, $m_{\,1}\,$, $m_{\,2}\,$, $\ldots$, $m_{\,s}$ by class of congruence modulo $p$ should be identical in each class.

Then, if $p\ \leq\ n\,$, the constant term $-\,1$ ``belongs to'' the class ``$\ \equiv\ 0\ \mod\ p\,$'', and $\zeta_{\,p}$ to the class ``$\ \equiv\ 1\ \mod\ p\,$''. The term $\zeta_{\,p}^{\,n}$ may belong to another class ``$\ \equiv i\ \mod\ p\,$'' with $i\ \neq\ 0$, $1$ or to one of the classes ``$\ \equiv 0\ \mod p\,$'' or ``$\ \equiv\ 1\ \mod\ p\,$''. If $p\ >\ n$ then the term $\zeta_{\,p}^{\,n}$ belongs to another class ``$\ \equiv\ i\ \mod\ p$'' with $i\ \neq\ 0\,$, $1\,$. In both cases we can complete the classes by suitably adding terms ``$\zeta_{\,p}^{\,m_{\,i}}\,$''. We now chose $s\ \geq\ 1$ and $m_{\,1}\,$, $m_{\,2}\,$, $\ldots$, $m_{\,s}$ sequentially such that the distribution of the residues modulo $p$ 
\begin{equation*}
  m_{\,1} \mod\ p\,, \quad m_{\,2} \mod\ p\,, \quad \ldots, \quad m_{\,s} \mod \ p
\end{equation*}
in the respective classes ``$\ \equiv\ i \mod\ p\,$", with $i\ =\ 0,\, 1,\, \ldots,\, p\,-\,1\,$, is equal.

If one solution $(m_{\,1},\,\ldots,\,m_{\,s})$ is found, then $\Phi_{\,p}\,(x)$ divides $f\,(x)\,$. Another solution $f^{\,\sharp}\ \in\ \B_{\,n}$ is now found with $s^{\,\sharp}\ =\ s\ +\ p$ and a suitable choice of the exponents $m_{\,s\,+\,1},\,\ldots,\,m_{\,s\,+\,p}$
\begin{equation*}
  f^{\,\sharp}\,(x)\ =\ -\,1\ +\ x\ +\ x^{\,n}\ +\ x^{\,m_{\,1}}\ +\ \ldots\ +\ x^{\,m_{\,s}}\ +\ x^{\,m_{\,s\,+\,1}}\ +\ \ldots\ x^{\,m_{\,s\,+\,p}}
\end{equation*}
so that
\begin{equation*}
  f^{\,\sharp}\,(\zeta_{\,p})\ =\ c^{\,\sharp}_{\,0} + c^{\,\sharp}_{\,1}\,\zeta_{\,p}\ +\ \ldots\ +\ c^{\,\sharp}_{\,p\,-\,1}\,\zeta_{\,p}^{\,p\,-\,1}\,,
\end{equation*}
where the $p$ residues modulo $p$ of $m_{\,s\,+\,1},\,\ldots,\,m_{\,s\,+\,p}$ are all distinct, satisfying
\begin{equation*}
  c^{\,\sharp}_{\,0}\ =\ c^{\,\sharp}_{\,1}\ =\ \ldots\ =\ c^{\,\sharp}_{\,p\,-\,1}\ =\ c_{\,0}\ +\ 1\,.
\end{equation*}
Then $\Phi_{\,p}\,(x)$ also divides $f^{\,\sharp}\,(x)\,$. Iterating this process we deduce the claim.
\end{proof}

%%% ----------------------------------------------------------------------- %%%

\subsection{Nonreciprocal parts}
\label{S5.3}

\begin{proposition}\label{neverunitcircle}
If $P\,(z)\ \in\ \Z\,[z]\,$, $P\,(1)\ \neq\ 0\,$, is nonreciprocal and irreducible, then $P\,(z)$ has no root of modulus $1\,$.
\end{proposition}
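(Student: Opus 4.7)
The plan is to argue by contradiction: assume $P$ has a root $\alpha$ with $\abs{\alpha}\ =\ 1\,$, and derive that $P$ is forced to be either reciprocal or to vanish at $1\,$, each of which contradicts the hypotheses.

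First I would dispose of the trivial case $P\,(0)\ =\ 0\,$. Irreducibility together with integer coefficients forces $P\,(z)\ =\ \pm\,z\,$, which has no root on the unit circle, so the statement holds vacuously. From now on I assume $P\,(0)\ \neq\ 0\,$, so that $\deg\,P^{\,*}\ =\ \deg\,P\,$.

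Next, because $P\ \in\ \Z\,[z]\,$, complex conjugation shows that $\overline{\alpha}$ is also a root of $P\,$. Since $\abs{\alpha}\ =\ 1\,$, $\overline{\alpha}\ =\ 1/\alpha\,$, so $1/\alpha$ is a root of $P\,$, which is equivalent to saying that $\alpha$ is a root of the reciprocal polynomial $P^{\,*}\,(z)\ =\ z^{\,\deg\,P}\,P\,(1/z)\,$. Now $P$ is irreducible in $\Z\,[z]$ with $\alpha$ as a root, hence (up to a rational multiple) it is the minimal polynomial of $\alpha$ over $\Q\,$, so $P$ divides $P^{\,*}$ in $\Q\,[z]\,$. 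Because $P$ and $P^{\,*}$ have the same degree, there exists $c\ \in\ \Q$ with $P^{\,*}\ =\ c\cdot P\,$.

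The final step is to pin down $c\,$. Comparing the leading coefficient of $P^{\,*}$ (which equals the constant term of $P$) with the leading coefficient of $P\,$, and comparing the constant term of $P^{\,*}$ (which equals the leading coefficient of $P$) with the constant term of $P\,$, one obtains $c\ =\ \pm\,1\,$. If $c\ =\ 1\,$, then $P^{\,*}\ =\ P$ means $P$ is reciprocal, contradicting the hypothesis. If $c\ =\ -\,1\,$, then $P^{\,*}\,(1)\ =\ -\,P\,(1)\,$; but $P^{\,*}\,(1)\ =\ P\,(1)$ by definition of $P^{\,*}\,$, whence $P\,(1)\ =\ 0\,$, contradicting $P\,(1)\ \neq\ 0\,$. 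Either outcome is incompatible with the hypotheses, so no such $\alpha$ exists.

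The only place that requires a bit of care, and which I would flag as the main (small) obstacle, is the step $P\ |\ P^{\,*}$ in $\Z\,[z]$ versus $\Q\,[z]\,$: one should check that $P$ may be assumed primitive (otherwise replace it by its primitive part, which is also irreducible and still satisfies $P\,(1)\ \neq\ 0$ and nonreciprocality), so Gauss's lemma ensures that the coefficient $c$ is in fact an integer, and the sign analysis above then applies cleanly.
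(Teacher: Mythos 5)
Your argument is correct and is essentially the paper's own proof: both use $\overline{\alpha}=1/\alpha$ to get $P^{\,*}=c\,P$ with $c=\pm 1$ from the leading/constant coefficients, then rule out $c=1$ by nonreciprocality and $c=-1$ by $P^{\,*}(1)=P(1)\neq 0$. Your extra care about the degenerate case $P(0)=0$ and about primitivity/Gauss's lemma is a minor refinement of the same route (the paper simply assumes $a_0 a_d\neq 0$ and works with a rational $\lambda$).
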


\begin{proof}
Let $P\,(z)\ =\ a_{\,d}\,z^{\,d}\ +\ \ldots\ +\ a_{\,1}\,z\ +\ a_{\,0}\,$, with $a_{\,0}\cdot a_{\,d}\ \neq\ 0\,$, be irreducible and nonreciprocal. We have $\gcd\,(a_{\,0},\,\ldots,\,a_{\,d})\ =\ 1\,$. If $P\,(\zeta)\ =\ 0$ for some $\zeta\,$, $\abs{\zeta}\ =\ 1\,$, then $P\,(\bar{\zeta})\ =\ 0\,$. But $\bar{\zeta}\ =\ 1/\zeta$ and then $P\,(z)$ would vanish at $1/\zeta\,$. Hence $P$ would be a multiple of the minimal polynomial $P^{\,*}$ of $1/\zeta\,$. Since $\deg\,(P)\ =\ \deg(P^{\,*})$ there exists $\lambda\ \neq\ 0\,$, $\lambda\ \in\ \Q\,$, such that $P\ =\ \lambda\,P^{\,*}\,$. In particular, looking at the dominant
and constant terms, $a_{\,0}\ =\ \lambda\,a_{\,d}$ and $a_{\,d}\ =\ \lambda\,a_{\,0}\,$. Hence, $a_{\,0}\ =\ \lambda^{\,2}\,a_{\,0}\,$, implying $\lambda\ =\ \pm\,\,\,1\,$. Therefore $P^{\,*}\ =\ \pm\,P\,$. Since $P$ is assumed nonreciprocal, $P^{\,*}\ \neq\ P\,$, implying $P^{\,*}\ =\ -\,P\,$. Since $P^{\,*}\,(1)\ =\ P\,(1)\ =\ -\,P\,(1)\,$, we would have $P\,(1)\ =\ 0\,$. Contradiction.
\end{proof}

For studying the irreducibility of the nonreciprocal parts of the polynomials $f\ \in\ \B\,$, we will follow the method introduced by \textsc{Ljunggren} \cite{Ljunggren1960}, used by \textsc{Schinzel} \cite{Schinzel1969, Schinzel1978} and \textsc{Filaseta} \cite{Filaseta1999}.

\begin{lemma}[Ljunggren \cite{Ljunggren1960}]\label{reducibi}
Let $P\,(x)\ \in\ \Z\,[\,x\,]\,$, $\deg\,(P)\ \geq\ 2\,$, $P\,(0)\ \neq\ 0\,$. The nonreciprocal part of $P\,(x)$ is reducible if and only if there exists $w\,(x)\ \in\ \Z\,[\,x\,]$ different from $\pm\,P\,(x)$ and $\pm\,P^{\,*}\,(x)$ such that $w\,(x)\cdot w^{\,*}\,(x)\ =\ P\,(x)\cdot P^{\,*}\,(x)\,$.
\end{lemma}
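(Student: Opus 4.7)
The plan is to exploit unique factorization in $\Z\,[\,x\,]$ after decomposing $P\,$. Let $R$ be the product (with multiplicity) of all reciprocal irreducible factors of $P\,$, and $C$ the product of the nonreciprocal ones, so that $P\ =\ R\cdot C\,$, $R^{\,*}\ =\ R\,$, $P^{\,*}\ =\ R\cdot C^{\,*}$ and $P\,P^{\,*}\ =\ R^{\,2}\,C\,C^{\,*}\,$. For any $w\ \in\ \Z\,[\,x\,]$ with $w\,w^{\,*}\ =\ P\,P^{\,*}\,$, comparing the multiplicity of each reciprocal irreducible factor $r$ of $P$ on both sides forces the multiplicity of $r$ in $w$ to equal $\alpha\ :=\ \mathrm{mult}_{\,r}\,(R)\,$: indeed $r^{\,*}\ =\ r$ yields $\mathrm{mult}_{\,r}\,(w)\ =\ \mathrm{mult}_{\,r}\,(w^{\,*})\,$, and their sum equals $2\,\alpha$ in $w\,w^{\,*}\,$. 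Hence $w\ =\ \pm\,R\cdot C'$ with $C'\,(C')^{\,*}\ =\ C\,C^{\,*}\,$, and the lemma reduces to the analogous question for $C\,$.

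Next I would parametrize the solutions $C'$ of $C'\,(C')^{\,*}\ =\ C\,C^{\,*}\,$. Since $C\,C^{\,*}$ is reciprocal, its distinct nonreciprocal irreducible factors come in pairs $\{p_{\,i},\,p_{\,i}^{\,*}\}\,$, $i\ =\ 1,\,\dots,\,u\,$, and $p_{\,i}$ and $p_{\,i}^{\,*}$ share a common multiplicity $\mu_{\,i}$ in $C\,C^{\,*}\,$. Unique factorization in $\Z\,[\,x\,]$ then forces
\begin{equation*}
  C'\ =\ \pm\,\prod_{\,i\,=\,1}^{\,u}\,p_{\,i}^{\,a_{\,i}}\,(p_{\,i}^{\,*})^{\,\mu_{\,i}\,-\,a_{\,i}}\,, \qquad 0\ \leq\ a_{\,i}\ \leq\ \mu_{\,i}\,,
\end{equation*}
yielding exactly $2\,\prod_{\,i}(\mu_{\,i}\ +\ 1)$ candidates. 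Writing $\beta_{\,i}^{\,+}$ (resp.\ $\beta_{\,i}^{\,-}$) for the multiplicity of $p_{\,i}$ (resp.\ of $p_{\,i}^{\,*}$) in $C\,$, the two exponent vectors $(a_{\,i})\ =\ (\beta_{\,i}^{\,+})$ and $(a_{\,i})\ =\ (\beta_{\,i}^{\,-})$ recover exactly $\pm\,C$ and $\pm\,C^{\,*}\,$.

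The conclusion then follows by a direct comparison. If $C$ is irreducible, then $u\ =\ 1$ and $\mu_{\,1}\ =\ 1\,$, so the count $2\,\prod_{\,i}(\mu_{\,i}\ +\ 1)\ =\ 4$ is exhausted by the four trivial solutions $\pm\,C,\,\pm\,C^{\,*}\,$ and no extra $w$ exists. Conversely, if $C$ is reducible then either $u\ \geq\ 2$ or some $\mu_{\,i}\ \geq\ 2\,$, and in both cases $2\,\prod_{\,i}(\mu_{\,i}\ +\ 1)\ \geq\ 6\ >\ 4\ \geq\ |\{\,\pm\,C,\,\pm\,C^{\,*}\,\}|\,$; any extra $C'$ then gives a valid polynomial $w\ =\ \pm\,R\,C'\ \neq\ \pm\,P,\,\pm\,P^{\,*}\,$. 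The main bookkeeping subtlety along the way will be the case where both a nonreciprocal irreducible factor of $C$ and its reciprocal already appear in $C$ (so that the pair $\{p_{\,i},\,p_{\,i}^{\,*}\}$ sits inside $C$ itself and the two multiplicities $\beta_{\,i}^{\,+},\,\beta_{\,i}^{\,-}$ must be tracked independently), together with the borderline situation where $C$ happens to be reciprocal as a product even though all its irreducible factors are nonreciprocal, in which case $|\{\,\pm\,P,\,\pm\,P^{\,*}\,\}|$ collapses from $4$ to $2\,$; the strict inequality $2\,\prod_{\,i}(\mu_{\,i}\ +\ 1)\ \geq\ 6$ still delivers the required extra solution.
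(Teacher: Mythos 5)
Your argument is correct and runs on the same engine as the paper's proof --- unique factorization in $\Z\,[x]$ together with the involution $p\ \mapsto\ p^{\,*}$ on irreducible factors --- but you execute it more systematically, and the extra care pays off. The paper treats the two directions separately: for the converse it writes $P\,P^{\,*}\ =\ a^{\,2}\,c\,c^{\,*}$ and deduces $w\ =\ \pm\,a\,c$ or $\pm\,a\,c^{\,*}$ (your multiplicity count on the reciprocal factors is exactly the detailed version of that deduction), while for the direct implication it merely exhibits $w\ =\ u\,v^{\,*}$ from a splitting $P\ =\ u\,v$ into nonreciprocal pieces and never checks that this $w$ differs from $\pm\,P$ and $\pm\,P^{\,*}$ (it need not, e.g.\ when $v^{\,*}\ =\ \pm\,v$ as a product, as happens for $v\ =\ p\,p^{\,*}$). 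Your full parametrization $C'\ =\ \pm\,\prod_i p_{\,i}^{\,a_{\,i}}\,(p_{\,i}^{\,*})^{\,\mu_{\,i}\,-\,a_{\,i}}$ handles both directions at once and closes that gap by counting ($2\,\prod_i(\mu_{\,i}\,+\,1)\ \geq\ 6$ versus at most $4$ trivial solutions). One residual caveat, which affects the paper's version equally: if some irreducible factor satisfies $p^{\,*}\ =\ -\,p$ (i.e.\ $p\ =\ \pm\,(x\,-\,1)$), the pair $\{p_{\,i},\,p_{\,i}^{\,*}\}$ collapses to a single prime, your candidate list degenerates, and the lemma as literally stated fails for $P\ =\ (x\,-\,1)^{\,2}$ under the strict convention that reciprocal means $r^{\,*}\ =\ r$; one must either count such anti-reciprocal factors as part of the reciprocal part (the usual Ljunggren--Schinzel convention) or observe that the intended application to $f\ \in\ \B$ is safe because $f\,(1)\ \neq\ 0$.
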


\begin{proof}
Let us assume that the nonreciprocal part of $P\,(x)$ is reducible. Then, there exists two nonreciprocal polynomials $u\,(x)$ and $v\,(x)$ such that $P\,(x)\ =\ u\,(x)\cdot v\,(x)\,$. Let $w\,(x)\ =\ u\,(x)\cdot v^{\,*}\,(x)\,$. We have:
\begin{equation*}
  w\,(x)\cdot w^{\,*}\,(x)\ =\ u\,(x)\cdot v^{\,*}\,(x)\cdot u^{\,*}\,(x)\cdot v\,(x)\ =\ P\,(x)\cdot P^{\,*}\,(x)\,.
\end{equation*}
Conversely, let us assume that the nonreciprocal part $c\,(x)$ of $P\,(x)$ is irreducible and that there exists $w\,(x)$ different of $\pm\,P\,(x)$ and $\pm\,P^{\,*}\,(x)$ such that $w\,(x)\cdot w^{\,*}\,(x)\ =\ P\,(x)\cdot P^{\,*}\,(x)\,$. Let $P\,(x)\ =\ a\,(x)\cdot c\,(x)$ be the factorization of $P$ where every irreducible factor in $a$ is reciprocal. Then,
\begin{equation*}
  P\,(x)\cdot P^{\,*}\,(x)\ =\ a^{\,2}\,(x)\cdot c\,(x)\cdot c^{\,*}\,(x)\ =\ w\,(x)\cdot w^{\,*}\,(x)\,.
\end{equation*}
We deduce $w\,(x)\ =\ \pm\,a\,(x)\cdot c\,(x)\ =\ \pm\,P\,(x)$ or $w\,(x)\ =\ \pm \,a\,(x)\cdot c^{\,*}\,(x)\ =\ \pm\,P^{\,*}\,(x)\,$. Contradiction.
\end{proof}

\begin{proposition}\label{nonreciprocalpart}
For any $f\ \in\ \B_{\,n}\,$, $n\ \geq\ 3\,$, denote by
\begin{equation*}
  f\,(x)\ =\ A\,(x)\cdot B\,(x)\cdot C\,(x)\ =\ -\,1\ +\ x\ +\ x^{\,n}\ +\ x^{\,m_{\,1}}\ +\ x^{\,m_{\,2}}\ +\ \ldots\ +\ x^{\,m_{\,s}}\,,
\end{equation*}
where $s\ \geq\ 1\,$, $m_{\,1}\ -\ n\ \geq\ n\ -\ 1\,$, $m_{\,j\,+\,1}\ -\ m_{\,j}\ \geq\ n\ -\ 1$ for $1\ \leq\ j\ <\ s\,$, the factorization of $f\,$, where $A$ is the cyclotomic component, $B$ the reciprocal noncyclotomic component, $C$ the nonreciprocal part. Then, $C$ is irreducible.
\end{proposition}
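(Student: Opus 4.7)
The plan is to argue by contradiction using Lemma~\ref{reducibi}. I would first assume $C$ is reducible and apply the lemma to produce $w(x)\in\Z[x]$ with $w\neq\pm f,\pm f^*$ and $w(x)\,w^*(x)=f(x)\,f^*(x)$. Since $ff^*$ has degree $2m_s$, this forces $\deg(w)=m_s$, and I would write $w(x)=\sum_{i=0}^{m_s}b_i\,x^i$. Comparing constant terms gives $b_0\,b_{m_s}=f(0)\,f^*(0)=(-1)(1)=-1\,$, so $\{b_0,b_{m_s}\}=\{+1,-1\}$; by replacing $w$ with an appropriate element of $\{w,-w,w^*,-w^*\}$ (all four still fail to equal $\pm f$ or $\pm f^*$ whenever $w$ does, by Proposition~\ref{descartesrule}), I would normalize so that $b_0=-1$ and $b_{m_s}=1$.

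Next I would exploit the sparseness of $f\in\B_n\,$. The conditions $m_1-n\geq n-1$ and $m_{j+1}-m_j\geq n-1$ imply
\begin{equation*}
  f(x)\ \equiv\ -1\ +\ x\pmod{x^n}\,,\qquad f^*(x)\ \equiv\ 1\pmod{x^{n-1}}\,,
\end{equation*}
whence $f(x)\,f^*(x)\equiv-1+x\pmod{x^{n-1}}\,$. Expanding the convolution $w\,w^*\pmod{x^{n-1}}$ then yields, for each $0\leq k\leq n-2\,$, the relation
\begin{equation*}
  \sum_{i=0}^{k}\,b_i\,b_{m_s-k+i}\ =\ -\,\delta_{k,0}\ +\ \delta_{k,1}\,,
\end{equation*}
which constrains the low-degree coefficients $b_1,\ldots,b_{n-2}$ in terms of the high-degree ones $b_{m_s-1},\ldots,b_{m_s-n+2}\,$. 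I would then repeat the same local analysis in an $(n-1)$-neighbourhood of each of the sparse exponents $n,\,m_1,\,m_2,\,\ldots,\,m_s$ of $f$ (and of their mirrors in the support of $f^*$); by the gap conditions, only a single monomial of $f$ contributes to $ff^*$ inside each such window, which gives analogous local identities pinning down the corresponding $b_i\,$'\,s.

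The final step is to glue these local identities into the global statement that $\mathrm{supp}(w)\subseteq\mathrm{supp}(f)\cup\mathrm{supp}(f^*)$ and that $w\,$'\,s coefficients at positions belonging to only one of the two supports coincide with those of the corresponding polynomial. Combined with the middle identity (the coefficient of $x^{m_s}$ in $w\,w^*=f\,f^*$), I expect this to force $w\in\{\pm f,\pm f^*\}\,$, giving the desired contradiction. The hardest step will be this global gluing: near two consecutive sparse exponents $m_j$ and $m_{j+1}$ with the extremal gap $m_{j+1}-m_j=n-1\,$, cross-convolution terms coming from $w^*\,$'\,s distant support interfere with the local analysis and must be controlled by a careful bookkeeping argument, in the spirit of the methods developed by \textsc{Ljunggren}, \textsc{Schinzel} and \textsc{Filaseta} for lacunary polynomials.
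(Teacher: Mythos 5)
Your opening move coincides with the paper's: assume $C$ reducible and invoke Lemma~\ref{reducibi} to produce $w$ with $w\,w^{\,*}\ =\ f\,f^{\,*}$ and $w\ \neq\ \pm\,f,\ \pm\,f^{\,*}$. (Your appeal to Proposition~\ref{descartesrule} to justify the normalization $b_{\,0}\ =\ -1$, $b_{\,m_{\,s}}\ =\ 1$ is spurious --- the set $\{\pm\,f,\,\pm\,f^{\,*}\}$ is simply stable under negation and reciprocation --- but harmless.) From there the two arguments diverge, and yours has a genuine gap: the entire weight of the proof rests on the ``global gluing'' step, which you do not carry out and explicitly flag as unresolved. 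The local relations $\sum_{i=0}^{k} b_{\,i}\,b_{\,m_{\,s}-k+i}\ =\ -\,\delta_{\,k,0}\ +\ \delta_{\,k,1}$ couple the low-index coefficients of $w$ to its high-index ones and determine neither block separately; and even the target conclusion $\mathrm{supp}(w)\ \subseteq\ \mathrm{supp}(f)\ \cup\ \mathrm{supp}(f^{\,*})$ would not finish the argument, because Lemma~\ref{reducibi} manufactures $w\ =\ u\,v^{\,*}$ from a putative factorization $f\ =\ u\,v$, so the ``bad'' $w$ you must exclude is precisely a hybrid of $f$ and $f^{\,*}$ whose support can sit inside that union. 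Controlling the cross-convolution terms near consecutive exponents with the extremal gap $n\,-\,1$ is exactly where the difficulty lives, and no mechanism for doing so is offered.

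The paper sidesteps all of this with a short global counting argument which is the ingredient your plan is missing. Writing $w\ =\ \sum_{j=0}^{q} b_{\,j}\,x^{\,k_{\,j}}$ with all $b_{\,j}\ \neq\ 0$, the coefficient of $x^{\,m_{\,s}}$ in $w\,w^{\,*}\ =\ f\,f^{\,*}$ gives $\norm{w}^{\,2}\ =\ \norm{f}^{\,2}\ =\ s\,+\,3$; evaluation at $x\ =\ 1$ gives $w\,(1)^{\,2}\ =\ f\,(1)^{\,2}\ =\ (s\,+\,1)^{\,2}$; and the constant term gives $b_{\,0}\,b_{\,q}\ =\ -\,1$, hence $b_{\,0}\ +\ b_{\,q}\ =\ 0$. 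Therefore $\sum_{j=1}^{q-1} b_{\,j}^{\,2}\ =\ s\,+\,1$ while $\bigl\lvert \sum_{j=1}^{q-1} b_{\,j}\bigr\rvert\ =\ s\,+\,1$, and the chain $s\,+\,1\ =\ \bigl\lvert\sum b_{\,j}\bigr\rvert\ \leq\ \sum \abs{b_{\,j}}\ \leq\ \sum b_{\,j}^{\,2}\ =\ s\,+\,1$ forces equality throughout, i.e.\ every $b_{\,j}\ =\ \pm\,1$ with a common sign for $1\ \leq\ j\ \leq\ q\,-\,1$ and $q$ equal to the number of monomials of $f$ minus one; this rules out every candidate except $w\ =\ \pm\,f$ or $\pm\,f^{\,*}$. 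If you want to keep your framework, insert this norm-plus-evaluation argument in place of the local analysis; pursuing the support-by-support induction would be substantially harder than the problem requires.
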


\begin{proof}
Let us assume that $C$ is reducible, and apply Lemma~\ref{reducibi}. Then, there should exist $w\,(x)$ different of $\pm\,f\,(x)$ and $\pm\,f^{\,*}\,(x)$ such that $w\,(x)\cdot w^{\,*}\,(x)\ =\ f\,(x)\cdot f^{\,*}\,(x)\,$. For short, we write
\begin{equation*}
  f\,(x)\ =\ \sum_{\,j\,=\,0}^{\,r}\,a_{\,j}\,x^{\,d_{\,j}} \qquad \mbox{and} \qquad w\,(x)\ =\ \sum_{\,j\,=\,0}^{\,q}\,b_{\,j}\,x^{\,k_{\,j}}\,,
\end{equation*}
where the coefficients $a_{\,j}$ and the exponents $d_{\,j}$ are given, and the $b_{\,j}\,$'\,s and the $k_{\,j}\,$'\,s are unkown integers, with $\abs{b_{\,j}}\ \geq\ 1\,$, $0\ \leq\ j\ \leq\ q\,$,
\begin{equation*}
  a_{\,0}\ =\ -\,1\,, \quad a_{\,1}\ =\ a_{\,2}\ =\ \ldots\ =\ a_{\,r}\ =\ 1\,,
\end{equation*}
\begin{multline*}
  0\ =\ d_{\,0}\ <\ d_{\,1}\ =\ 1\ <\ d_{\,2}\ =\ n\ <\ d_{\,3}\ =\ m_{\,1}\\ 
  <\ \ldots\ <\ d_{\,r\,-\,1}\ =\ m_{\,s\,-\,1}\ <\ d_{\,r}\ =\ m_{\,s}\,,
\end{multline*}
\begin{equation*}
  0\ =\ k_{\,0}\ <\ k_{\,1}\ <\ k_{\,2}\ <\ \ldots\ <\ k_{\,q\,-\,1}\ <\ k_{\,q}\,.
\end{equation*}
The relation $w\,(x)\cdot w^{\,*}\,(x)\ =\ f\,(x)\cdot f^{\,*}\,(x)$ implies the equality: $2\,k_{\,q}\ =\ 2\,d_{\,r}\,$; expanding it and considering the terms of degree $k_{\,q}\ =\ d_{\,r}\,$, we deduce $\norm{f}^{\,2}\ =\ \norm{w}^{\,2}\ =\ r\ +\ 1$ which is equal to $s\ +\ 3\,$. Since $f^{\,*}\,(1)\ =\ f\,(1)$ and that $w^{\,*}\,(1)\ =\ w\,(1)\,$, it also implies $f\,(1)^{\,2}\ =\ w\,(1)^{\,2}$ and $b_{\,0}\cdot b_{\,q}\ =\ -\,1\,$. Then we have two equations
\begin{equation*}
  r\ -\ 1\ =\ \sum_{\,j\,=\,1}^{\,q\,-\,1}\,b_{\,j}^{\,2}\,, \qquad (r\ -\ 1)^{\,2}\ =\ \Bigl(\,\sum_{\,j\,=\,1}^{\,q\,-\,1}\,b_{\,j}\,\Bigr)^{\,2}\,.
\end{equation*}
We will show that they admit no solution except the solution $w\,(x)\ =\ \pm\,f\,(x)$ or $w\,(x)\ =\ \pm\,f^{\,*}\,(x)\,$.

Since all $\abs{b_{\,j}}\,$'\,s are $\geq\ 1\,$, the inequality $q\ \leq\ r$ necessarily holds. If $q\ =\ r\,$, then the $b_{\,j}\,$'\,s should all be equal to $-\,1$ or $1\,$, what corresponds to $\pm\,f\,(x)$ or to $\pm\,f^{\,*}\,(x)\,$. If $2\ \leq\ q\ <\ r\,$, the maximal value taken by a coefficient $b_{\,j}^{\,2}$ is equal to the largest square less than or equal to $r\ -\ q\ +\ 1\,$, so that $\abs{b_{\,j}}\ \leq\ \sqrt{r\ -\ q\ +\ 1}\,$. Therefore, there is no solution for the cases ``$q\ =\ r\ -\ 1$'' and ``$q\ =\ r\ -\ 2$''. If $q\ =\ r\ -\ 3$ all $b_{\,j}^{\,2}\,$'\,s are equal to $1$ except one equal to $4\,$, and
\begin{equation*}
  r\ -\ 1\ =\ \sum_{\,j\,=\,1}^{\,r\,-\,4}\,b_{\,j}^{\,2}\,, \qquad (r\ -\ 1)^{\,2}\ >\ \Bigl(\,\sum_{\,j\,=\,1}^{\,r\,-\,4}\,b_{\,j}\,\Bigr)^{\,2}\,.
\end{equation*}
This means that the case ``$q\ =\ r\ -\ 3$'' is impossible. The two cases ``$q\ =\ r\ -\ 4$'' and ``$q\ =\ r\ -\ 5$'' are impossible since, for $m\ =\ 5$ and $6\,$, $\sum_{\,j\,=\,1}^{\,r\,-\,m}\,b_{\,j}^{\,2}$ cannot be equal to $r\ -\ 1\,$. This is general. For $q\ \leq\ r\ -\ 3$ at least one of the $\abs{b_{\,j}}\,$'\,s is equal to $2\,$; in this case we would have 
\begin{equation*}
  r\ -\ 1\ =\ \pm\,\sum_{\,j\,=\,1}^{\,q\,-\,1}\,b_{\,j}\ \leq\ \sum_{\,j\,=\,1}^{\,q\,-\,1}\,\abs{b_{\,j}}\ <\ \sum_{\,j\,=\,1}^{\,q\,-\,1}\,b_{\,j}^{\,2}\ =\ r\ -\ 1\,.
\end{equation*}
Contradiction.
\end{proof}

%%% ----------------------------------------------------------------------- %%%

\subsection[Thickness of the annular neighbourhoods of $\abs{z}\ =\ 1$]{Thickness of the annular neighbourhoods of $\abs{z}\ =\ 1$ containing the nonlenticular roots}
\label{S5.4}

Let $n\ \geq\ 3\,$, and $\delta_{\,n}$ be a real number $>\ 0\,$, smaller than $1\,$. Let
\begin{equation*}
  \D_{\,n,\,\delta_{\,n}}\ :=\ \bigl\{\,z\ \bigl\vert\ \abs{z}\ <\ 1\,,\ \delta_{\,n}\ <\ \abs{G_{\,n}\,(z)}\,\}\,.
\end{equation*}
We now characterize the geometry of the zeroes, in $\D_{\,n,\,\delta_{\,n}}\,$, of a given
\begin{equation*}
  f\,(x)\ :=\ -\,1\ +\ x\ +\ x^{\,n}\ +\ x^{\,m_{\,1}}\ +\ x^{\,m_{\,2}}\ +\ \ldots\ +\ x^{\,m_{\,s}}\ \in\ \B_{\,n}\,,
\end{equation*}
where $s\ >\ 0\,$, $m_{\,1}\ -\ n\ \geq\ n\ -\ 1\,$, $m_{\,q\,+\,1}\ -\ m_{\,q}\ \geq\ n\ -\ 1$ for $1\ \leq\ q\ <\ s\,$. Obviously a zero $x\ \in\ \D_{\,n,\,\delta_{\,n}}$ of $f$ is $\neq\ 0$ and is not a zero of the trinomial $-\,1\ +\ x\ +\ x^{\,n}\,$. Moreover,
\begin{multline}\label{deltamino}
  \delta_{\,n}\ <\ \abs{-\,1\ +\ x\ +\ x^{\,n}}\ =\ \abs{x^{\,m_{\,1}}\ +\ x^{\,m_{\,2}}\ +\ \ldots\ +\ x^{\,m_{\,s}}}\\ 
  <\ \abs{x}^{\,m_{\,1}}\ +\ \abs{x}^{\,m_{\,2}}\ +\ \ldots\ +\ \abs{x}^{\,m_{\,s}}\,.
\end{multline}
This inequality implies that $1\ -\ \abs{x}$ is necessarily small. Indeed the function $Y\,:\ u\ \longmapsto\ \sum_{\,j\,=\,1}^{\,s}\,u^{\,m_{\,j}}$ is increasing, with increasing derivative, on $(\,0,\,1\,]\,$, so that the unique real value $0\ <\ r\ <\ 1$ which satisfies $Y\,(r)\ =\ \delta_{\,n}$ admits the upper bound $e_{\,\sup}\ <\ 1$ given by $s\ -\ \delta_{\,n}\ =\ Y^{\,\prime}\,(1)\cdot(1\ -\ e_{\,\sup})\ =\ \Bigl(\,\sum_{\,j\,=\,1}^{\,s}\,m_{\,j}\,\Bigr)\cdot(1\ -\ e_{\,\sup})\,$; so that
\begin{equation*}
  r\ <\ e_{\,\sup}\ =\ 1\ -\ \frac{s\ -\ \delta_{\,n}}{\sum_{\,j\,=\,1}^{\,s}\,m_{\,j}}\,.
\end{equation*}
Let us now give a lower bound $e_{\,\inf}$ of $r\,$, as a function of $n\,$, $s\,$, $\delta_{\,n}$ and $m_{\,s}\,$. If $s\ =\ 1\,$, using $m_{\,1}\ \geq\ n\ +\ (n\ -\ 1)\,$, the inequality $\delta_{\,n}\ \leq\ \abs{x}^{\,m_{\,1}}\ \leq\ \abs{x}^{\,2\,n\,-\,1}$ implies:
\begin{equation*}
  e_{\,\inf}\ =\ \delta_{\,n}^{\,1/(2\,n\,-\,1)}\ \leq\ r\,.
\end{equation*}
As soon as the assumption $\limsup\limits_{n\ \to\ \infty}\,\dfrac{\lo\delta_{\,n}}{n}\ =\ 0$ is satisfied, then $e_{\,\inf}$ tends to $1$ as $n$ tends to infinity. This assumption means that the domain $\D_{\,n,\,\delta_{\,n}}$ should avoid small disks centered at the lenticular roots of $G_{\,n}\,$.

If $s\ \geq\ 2\,$, using the inequalities $m_{\,q\,+\,1}\ -\ m_{\,q}\ \geq\ n\ -\ 1\,$, $1\ \leq\ q\ <\ s\,$, we deduce, from Equation~\eqref{deltamino},
\begin{multline*}
  \delta_{\,n}\ <\ \abs{x}^{\,m_{\,1}}\ +\ \abs{x}^{\,m_{\,2}}\ +\ \ldots\ +\ \abs{x}^{\,m_{\,s}}\\ 
  \leq\ \abs{x}^{\,2\,n\,-\,1}\,\Bigl(\frac{1\ -\ \abs{x}^{\,(n\,-\,1)\cdot(s\,-\,1)}}{1\ -\ \abs{x}^{\,n\,-\,1}}\Bigr)\ +\ \abs{x}^{\,m_{\,s}}\,.
\end{multline*}
Putting $H\ =\ \abs{x}^{\,n\,-\,1}\,$, we are now bound to solve the following equation in $H$
\begin{equation*}
  \delta_{\,n}\ =\ H^{\,2}\cdot\Bigl(\,\frac{1\ -\ H^{\,s\,-\,1}}{1\ -\ H}\,\Bigr)\ +\ H^{\,m_{\,s}\,-\,1}
\end{equation*}
to find $e_{\,\inf}\,$, for $H\ <\ 1$ close to one, of the form $1\ -\ \epsilon\,$. It is easy to check that the expression of $\epsilon\,$, at the first-order, is
\begin{equation*}
  \epsilon\ =\ 2\;\frac{n\,s\ -\ \delta_{\,n}\,n\ +\ \delta_{\,n}\ -\ s}{n\,s^{\,2}\ +\ n\,s\ -\ s^{\,2}\ +\ 2\,m_{\,s}\ -\ 2\,n\ -\ s}\,,
\end{equation*}
leading to
\begin{equation*}
  e_{\,\inf}\ =\ \left(1\ -\ 2\;\frac{n\,s\ -\ \delta_{\,n}\,n\ +\ \delta_{\,n}\ -\ s}{n\,s^{\,2}\ +\ n\,s\ -\ s^{\,2}\ +\ 2\,m_{\,s}\ -\ 2\,n\ -\ s}\right)^{\,1\,/\,(n\,-\,1)}\,.
\end{equation*}
For $n\,$, $\delta_{\,n}$ and $s$ fixed, the function $m_{\,s}\ \longmapsto\ \epsilon$ is decreasing and then $m_{\,s}\ \longmapsto\ e_{\,\inf}$ is increasing. This means that the thickness of the annular neighbourhood of $\abs{z}\ =\ 1$ containing the nonlenticular roots of $f$ diminishes as the degree $m_{\,s}$ of $f$ tends to infinity, for a fixed number of monomials $s\ +\ 3$ and a fixed dynamical degree $n\,$.

Therefore all the zeroes of $f$ which lie in $\D_{\,n,\,\delta_{\,n}}$ belong to
\begin{equation*}
  \bigl\{\,z\ \bigl\vert\ e_{\,\inf}\ <\ z\ <\ 1\,\bigr\}\,.
\end{equation*}
An example of dependency of $e_{\,\inf}$ with $m_{\,s}$ is given by Figure~\ref{lt05}(\textit{b}) and Figure~\ref{lt02}(\textit{b}): for fixed $n\ =\ 12$ and $s\ =\ 5\,$, and varying $m_{\,s}$ from $35$ to $385\,$.

The \textsc{Monte--Carlo} approach allows to compare the thicknesses $\delta_{\,n}$ with numerical values of thicknesses $\delta\,(n)$ obtained by numerical computation of roots for polynomials of $\B$ with $n\ \leq\ 3\,000\,$. The results are reported in Figure~\ref{thicknesses}.

\begin{figure}
  \begin{center} 
    \includegraphics[width=0.89\textwidth]{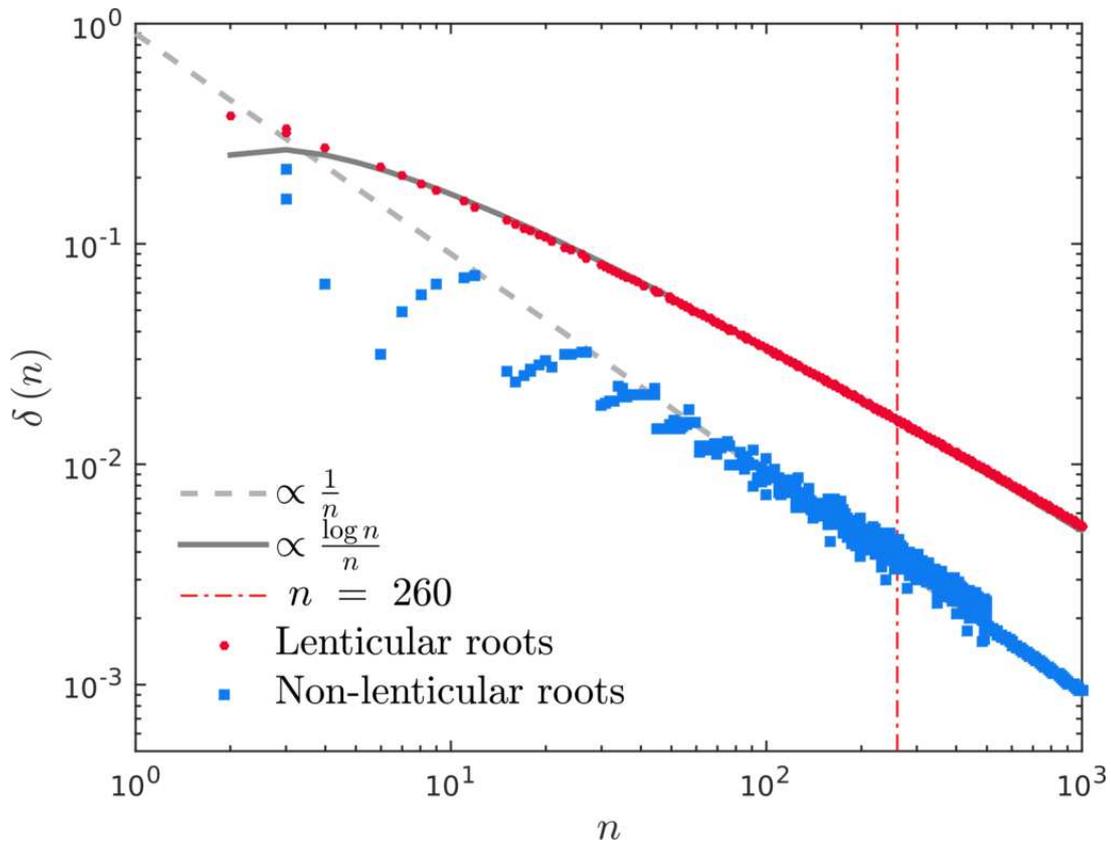}
  \end{center}
  \caption{\small\em Thickness $\delta\,(n)\,$, proportional to $1/n$ at the first-order, of the annular neighbourhood of the unit circle which contains the nonlenticular roots (of modulus $<\ 1$), represented as a function of the dynamical degree $n\,$, for the almost \textsc{Newman} polynomials $f$ of the class $\B\,$. The curve ``lenticular roots'' represents the distance between $1$ and the positive real zero (summit) of the lenticulus of $f\ \in\ \B\,$; this distance is $\sim\ \lo n/n$ at the first-order. The \textsc{Monte-Carlo} method in $\B_{\,n}$ is used, for $n$ less than $3\,000\,$.}
  \label{thicknesses}
\end{figure}

%%% ----------------------------------------------------------------------- %%%

\subsection{Proof of Theorem~\ref{thm1factorization}}
\label{S5.5}

\begin{enumerate}
  \item By Proposition~\ref{descartesrule} the nonreciprocal part $C$ is nontrivial. By Proposition~\ref{nonreciprocalpart} the nonreciprocal part $C$ is irreducible. By Proposition~\ref{neverunitcircle} the irreducible factor $C$ never vanishes on the unit circle.

  \item For $n\ \geq\ 3$ the \textsc{R\'enyi} $\beta-$expansion of $1$ in base $\theta_{\,n}^{\,-1}\ >\ 1$ is the sequence of digits of the coefficient vector of $G_{\,n}\,(x)\ +\ 1$ (\cf \textsc{Lothaire} \cite[Chap.~7]{Lothaire2002}); the digits lie in the alphabet $\{0,\, 1\}\,$. We have
  \begin{equation*}
    d_{\,\theta_{\,n}^{\,-\,1}}\,(1)\ =\ 0\,.\,1\,0^{\,n\,-\,2}\,1\,.
  \end{equation*}
  Similarly $d_{\,\theta_{\,n\,-\,1}^{\,-1}}\,(1)\ =\ 0\,.\,1\,0^{\,n\,-\,3}\,1\,$, where the sequence of digits comes from the coefficient vector of $G_{\,n\,-\,1}\,(x)\ +\ 1\,$. Let $\beta\ >\ 1$ denote the real algebraic integer such that the \textsc{R\'enyi} $\beta-$expansion of $1$ in base $\beta$ is exactly the sequence of digits of the coefficient vector of $f\,(x)\ +\ 1\,$. We have:
  \begin{equation*}
    d_{\,\beta}\,(1)\ =\ 0\,.\,1\,0^{\,n\,-\,2}\,1\,0^{\,m_{\,1}\,-\,n\,-\,1}\,1\,0^{\,m_{\,2}\,-\,m_{\,1}\,-\,1}\,1\,\ldots\,1\,0^{\,m_{\,s}\,-\,m_{\,s\,-\,1}\,-\,1}\,1\,.
  \end{equation*}
  Since the two following lexicographical conditions are satisfied:
  \begin{equation*}
    d_{\,\theta_{\,n}^{\,-1}}\,(1)\ =\ 0\,.\,1\,0^{\,n\,-\,2}\,1\ \preccurlyeq_{lex}\ d_{\,\beta}\,(1)\ \preccurlyeq_{lex}\ d_{\,\theta_{\,n\,-\,1}^{\,-1}}\,(1)\ =\ 0\,.\,1\,0^{\,n\,-\,3}\,1\,.
  \end{equation*}
  Lemma~5.1 (ii) in \textsc{Flatto, Lagarias} and \textsc{Poonen} \cite{Flatto1994} implies:
  \begin{equation*}
    \theta_{\,n}^{\,-1}\ <\ \beta\ <\ \theta_{\,n\,-\,1}^{\,-1} \quad\Longleftrightarrow\quad \theta_{\,n\,-\,1}\ <\ 1/\beta\ <\ \theta_{\,n}\,.
  \end{equation*}
  Since $-\,C^{\,*}$ is nontrivial, monic, irreducible, nonreciprocal, and vanishes at $\beta\,$, it is the minimal polynomial of $\beta\,$, and $\beta$ is nonreciprocal.
\end{enumerate}

%%% ----------------------------------------------------------------------- %%%

\section{Heuristics on the irreducibility of the polynomials of $\B$}
\label{S6}

The \textsc{Monte-Carlo} method is used for testing the \textsc{Odlyzko--Poonen} Conjecture (``OP Conjecture'') on the \textsc{Newman} polynomials, the variant Conjecture (``variant OP Conjecture'') on the almost \textsc{Newman} polynomials and for estimating the proportion of irreducible polynomials in the class $\B\,$. The Conjectures ``OP'' and ``variant OP'' state that the proportion of irreducible polynomials in the class of \textsc{Newman} polynomials, resp. almost \textsc{Newman} polynomials, is one. This value of one is reasonable in the context of the general Conjectures on random polynomials \cite{Borst2017}.

The probability of $f\ \in\ \B$ to be an irreducible polynomial can be defined asymptotically as follows. Let $s\ \geq\ 1\,$, $n\ \geq\ 2$ and $N\ \geq\ n\,$. Let $\B_{\,n}^{\,(N,\,s)}$ denote the set of the polynomials $f\ \in\ \B_{\,n}$  having $s\ +\ 3$ monomials such that $\deg\,(f)\ \leq\ N\,$. Denote 
\begin{equation*}
  \B^{\,(N,\,s)}\ :=\ \bigcup_{\,2\ \leq\ n\ \leq\ N}\,\B_{\,n}^{\,(N,\,s)}\,, \qquad \B^{\,(N)}\ :=\ \bigcup_{\,s\ \geq\ 1}\,\B^{\,(N,\,s)}\,.
\end{equation*}
Then $\B\ =\ \bigcup_{\,N\ \geq\ 2}\,\B^{\,(N)}\,$. For $s\ \geq\ 1\,$, let $\B^{\,[s]}\ =\ \bigcup_{\,N\ \geq\ 2}\,\B^{\,(N,\,s)}\,$. For every $s\ \geq\ 1$ though the two adherence values
\begin{multline}\label{eqq1}
  \liminf_{\,N\ \to\ \infty}\;\frac{\#\{\,f\ \in\ \B^{\,(N,\,s)}\ \bigl\vert\ f\ \mbox{irreducible}\,\bigr\}}{\#\bigl\{\,f\ \in\ \B^{\,(N,\,s)}\,\bigr\}}\ \leq\\ 
  \limsup_{\,N\ \to\ \infty}\;\frac{\#\bigl\{\,f\ \in\ \B^{\,(N,\,s)}\ \bigl\vert\ f\ \mbox{irreducible}\,\bigr\}}{\#\bigl\{\,f\ \in\ \B^{\,(N,\,s)}\,\bigr\}}\,,
\end{multline}
exist, and, in a similar way,
\begin{multline}\label{eqq2}
  \liminf_{\,N\ \to\ \infty}\;\frac{\#\bigl\{\,f\ \in\ \B^{\,(N)}\ \bigl\vert\ f\ \mbox{irreducible}\,\bigr\}}{\#\bigl\{\,f\ \in\ \B^{\,(N)}\,\bigr\}}\ \leq\\ 
  \limsup_{\,N\ \to\ \infty}\frac{\#\bigl\{\,f\ \in\ \B^{\,(N)}\ \bigl\vert\ f\ \mbox{irreducible}\,\bigr\}}{\#\bigl\{\,f\ \in\ \B^{\,(N)}\,\bigr\}}\,,
\end{multline}
exist, without being equal a priori, we find that, for $s\ =\ 1$ and $s\ =\ 2\,$, and for arbitrary values of $s\ \geq\ 1\,$, there is a numerical evidence that the limits exist in both \eqref{eqq1} and \eqref{eqq2} (\ie $\liminf\ =\ \limsup$). Table~\ref{tableIRRED} reports the proportion of irreducible quadrinomials ($s\ =\ 1$), resp. irreducible pentanomials ($s\ =\ 2$), in the class $\B\,$, with the $90\%-$confidence interval under the assumption that the  limit exists in each case. We find that the proportion of irreducible polynomials in $\B$ is 
\begin{equation*}
  \lim_{\,N\ \to\ \infty}\;\frac{\#\bigl\{\,f\ \in\ \B^{\,(N)}\ \bigl\vert\ f\ \mbox{irreducible}\,\bigr\}}{\#\bigl\{\,f\ \in\ \B^{\,(N)}\,\bigr\}}\ =\ 0.756\ \pm\ 0.02235\,.
\end{equation*}
This value justifies the statement of the ``\emph{Asymptotic Reducibility Conjecture}''. The reason of this residual reducibility finds its origin in Proposition~\ref{beaucoup} where cyclotomic polynomials are asymptotically present in the factorizations, though the authors have no proof of it. By \textsc{Monte-Carlo} methods, polynomials of degrees $N$ up to $3\,000$ are tested (see Figure~\ref{IrreducibilityMontecarlo}), and the number of monomials $s\ +\ 3$ in each $f\ \in\ \B_{\,n}^{\,(N)}$ is random in the range of values of $s\,$.

\begin{table}
  \caption{\small\em Asymptotic proportion of irreducible polynomials in various classes: \textsc{Newman} polynomials, almost \textsc{Newman} polynomials, $\B$ and the subclasses $\B^{\,[\,0\,]}\,$, $\B^{\,[\,1\,]}\,$, $\B^{\,[\,2\,]}$ of $\}$ (Maximal polynomial degree: $3\,000\,$, number of \textsc{Monte-Carlo} runs: $4\,000$)}
  \bigskip
  \begin{tabular}{lccl}
  \hline\hline\noalign{\smallskip}
  Polynomials (class) & Proportion & $90\%-$confidence & Expected\\ & & Interval (estimated) & \\
  \noalign{\smallskip}\hline\hline\noalign{\smallskip}
  OP (\textsc{Newman}) & $0.967$ & $0.00930$ & $1$ (Conjectured) \\
  variant OP (almost \textsc{Newman}) & $0.968$ & $0.00916$ & $1$ (Conjectured) \\
  Class $\B$ & $0.756$ & $0.02235$ & $3/4$ (Conjectured) \\
  Trinomials ($s\ =\ 0$) & $5/6\ =\ 0.8(3)$ & --- & $5/6$ ~exact (\textsc{Selmer}) \\
  Quadrinomials ($s\ =\ 1$)& $0.575$ & $0.02573$ & unkown \\
  Pentanomials ($s\ =\ 2$) & $0.826$ & $0.01601$ & unkwon \\
  \noalign{\smallskip}\hline\hline
  \end{tabular}
  \label{tableIRRED}
\end{table}

In the case ``$s\ =\ 0$'', $\bigcup_{\,n\ \geq\ 2}\,\B^{\,(N\,=\,n,\,s\,=\,0)}$ denotes the set of trinomials of the type $-\,1\ +\ x\ +\ x^{\,n}\,$, $n\ \geq\ 2\,$, whose factorization was studied by \textsc{Selmer} \cite{Selmer1956}; the proportion of irreducible trinomials is exact:
\begin{equation}\label{eqq3}
  \lim_{\,n\ \to\ \infty}\;\frac{\#\bigl\{\,f\ \in\ \B^{\,(n,\,s\,=\,0)}\ \bigl\vert\ f\ \mbox{irreducible}\,\bigr\}}{\#\bigl\{\,f\ \in\ \B^{\,(n,\,s\,=\,0)}\}}\ =\ \frac{5}{6}\ =\ 0.8(3)\,.
\end{equation}

\begin{figure}
  \begin{center}
    \includegraphics[width=0.89\textwidth]{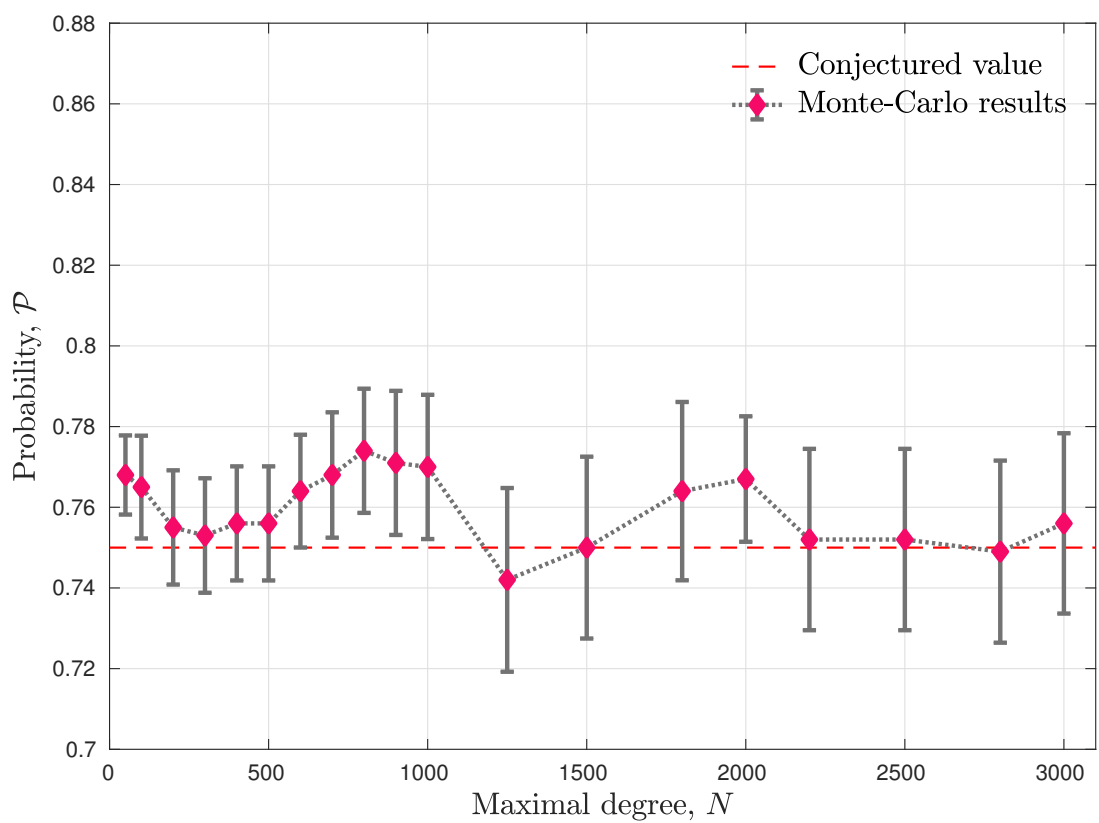}
  \end{center}
  \caption{\small\em Probability to be irreducible for a polynomial of the class $\B$ having degree less than $N\,$. The estimated $90\%-$confidence intervals are represented. A limit value, as $N$ tends to infinity, is conjectured to exist and its value is conjectured to be the rational number $3/4\,$.}
  \label{IrreducibilityMontecarlo}
\end{figure}

%%% ----------------------------------------------------------------------- %%%

\section[Lenticular roots on continuous curves stemming from $z\ =\ 1$]{Lenticular roots on continuous curves stemming from $z\ =\ 1$ and boundary of Solomyak's fractal}
\label{S7}

In this paragraph we first recall the constructions of \textsc{Solomyak} \cite{Solomyak1994} on the sets of zeroes of the family $\W$ of power series having real coefficients in the interval $[\,0,\,1\,]\,$, in the interior of the unit disk, and \textsc{Solomyak}'\,s Theorem~\ref{solomyakfractalGBordG}. Then, we will recall how the polynomials of the class $\B$ are related to elements of $\W\,$.

Let
\begin{equation*}
  \W\ :=\ \bigl\{\,h\,(z)\ =\ 1\ +\ \sum_{\,j\,=\,1}^{\,+\,\infty}\,a_{\,j}\,z^{\,j}\ \bigl\vert\ a_{\,j}\ \in\ [\,0,\,1\,]\,\bigr\}
\end{equation*}
be the class of power series defined on $\abs{z}\ <\ 1$ equipped with the topology of uniform convergence on compacts sets of $\abs{z}\ <\ 1\,$. The subclass $\W_{\;0,\,1}$ of $\W$ denotes functions whose coefficients are all zeros or ones. The space $\W$ is compact and convex. Let
\begin{equation*}
  \G\ :=\ \bigl\{\,\lambda\ \bigl\vert\ \abs{\lambda}\ <\ 1\,,\ \exists\,h\,(z)\ \in\ \B\ \mbox{such that}\ h\,(\lambda)\ =\ 0\,\bigr\}\ \subset\ \bigl\{\,z\ \bigl\vert\ \abs{z}\ <\ 1\,\bigr\}
\end{equation*}
be the set of zeroes of the power series belonging to $\W\,$. The elements of $\G$ lie within the unit circle and curves in $\abs{z}\ <\ 1$ given in polar coordinates, close to the unit circle, by \cite{Verger-Gaugry2016}. The domain $D\,(0,\,1)\ \setminus\ \G$ is star-convex due to the fact that: $h\,(z)\ \in\ \W\quad \Longrightarrow\quad h\,(z\,/\,r)\ \in\ \W\,$, for any $r\ >\ 1$ (\cf \cite[Section~\textsection3]{Solomyak1994}).

For every $\phi\ \in\ (0,\,2\,\pi)\,$, there exists $\lambda\ =\ r\,\ue^{\,\ui\,\phi}\ \in\ \G\,$; the point of minimal modulus with argument $\phi$ is denoted $\lambda_{\,\phi}\ =\ \rho_{\,\phi}\,\ue^{\,\ui\,\phi}\ \in\ \G\,$, $\rho_{\,\phi}\ <\ 1\,$. A function $h\ \in\ \W$ is called $\phi-$optimal if $h\,(\lambda_{\,\phi})\ =\ 0\,$. Denote by $\K$ the subset of $(0,\,\pi)$ for which there exists a $\phi-$optimal function belonging to $\W_{\;0,\,1}\,$. Denote by $\partial\,\G_{\,S}$ the ``spike'': $\,\Bigl[-\,1,\,\frac{1}{2}\;(1\ -\ \sqrt{5})\,\Bigr]$ on the negative real axis.

\begin{theorem}[Solomyak]\label{solomyakfractalGBordG}
\begin{enumerate}

  \item The union $\G\ \bigcup\ \T\ \bigcup\ \partial\,\G_{\,S}$ is closed, symmetrical with respect to the real axis, has a cusp at $z\ =\ 1$ with logarithmic tangency (\cf \cite[Figure~1]{Solomyak1994}).

  \item The boundary $\partial\,\G$ is a continuous curve, given by $\phi\ \longmapsto\ \abs{\lambda_{\,\phi}}$ on $[\,0,\,\pi)\,$, taking its values in $\Bigl[\,\frac{\sqrt{5}\ -\ 1}{2},\, 1\Bigr)\,$, with $\abs{\lambda_{\,\phi}}\ =\ 1$ if and only if $\phi\ =\ 0\,$. It admits a left-limit at $\pi^{\,-}\,$, $1\ >\ \lim_{\,\phi\ \to\ \pi^{\,-}}\,\abs{\lambda_{\,\phi}}\ >\ \abs{\lambda_{\,\pi}}\ =\ \frac{1}{2}\;(-\,1\ +\ \sqrt{5})\,$, the left-discontinuity at $\pi$ corresponding to the extremity of $\partial\,\G_{\,S}\,$.

  \item At all points $\rho_{\,\phi}\,\ue^{\,\ui\,\phi}\ \in\ \G$ such that $\phi\,/\,\pi$ is rational in an open dense subset of $(0,\,2)\,$, $\partial\,\G$ is non-smooth.

  \item There exists a nonempty subset of transcendental numbers $L_{\,\mathrm{tr}}\,$, of \textsc{Hausdorff} dimension zero, such that $\phi\ \in\ (0,\,\pi)$ and $\phi\ \not\in\ \K\ \bigcup\ \pi\ \Q\ \cup\ \pi\,L_{\,\mathrm{tr}}$ implies that the boundary curve $\partial\,\G$ has a tangent at $\rho_{\,\phi}\,\ue^{\,\ui\,\phi}$ (smooth point).

\end{enumerate}
\end{theorem}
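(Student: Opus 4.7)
The plan is to exploit the compactness and convexity of $\W$ in the topology of uniform convergence on compact subsets of $D\,(0,\,1)\,$, together with \textsc{Hurwitz}'s theorem on the preservation of zeros under locally uniform limits of non-vanishing analytic functions. For part (1), if $(\lambda_{\,k})\ \subset\ \G$ converges to $\lambda_{\,\infty}$ with $\abs{\lambda_{\,\infty}}\ <\ 1\,$, I pick $h_{\,k}\ \in\ \W$ with $h_{\,k}\,(\lambda_{\,k})\ =\ 0$ and extract $h_{\,k_{\,j}}\ \to\ h\ \in\ \W\,$; because $h\,(0)\ =\ 1\ \neq\ 0\,$, \textsc{Hurwitz} forces $h\,(\lambda_{\,\infty})\ =\ 0\,$, so $\lambda_{\,\infty}\ \in\ \G\,$, which combined with the obvious contributions on $\T$ and along $\partial\,\G_{\,S}$ gives the closedness of $\G\ \cup\ \T\ \cup\ \partial\,\G_{\,S}\,$. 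Symmetry with respect to the real axis is immediate from $h\,(\bar z)\ \in\ \W$ whenever $h\,(z)\ \in\ \W\,$. The cusp at $z\ =\ 1$ with logarithmic tangency comes from writing a prospective zero as $\lambda\ =\ 1\ -\ \epsilon\,\ue^{\,\ui\,\theta}$ and analysing the balance $\sum_{\,j}\,a_{\,j}\,\lambda^{\,j}\ =\ -\,1\,$: coherent contributions come only from $j\ \lesssim\ 1\,/\,\epsilon\,$, while the phases $j\,\theta$ must accumulate in order to cancel $-\,1\,$, forcing a relation of the form $\theta\ \gtrsim\ 1\,/\,\abs{\lo\epsilon}\,$.

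For part (2), upper-semicontinuity of $\phi\ \mapsto\ \abs{\lambda_{\,\phi}}$ follows from the very same compactness/\textsc{Hurwitz} extraction, and lower-semicontinuity follows from the openness of $D\,(0,\,1)\ \setminus\ \G$ together with the star-convexity property noted just before the statement. The bound $\abs{\lambda_{\,\phi}}\ \geq\ (\sqrt{5}\ -\ 1)\,/\,2$ is realised at $\phi\ =\ \pi$ by the extremal series
\begin{equation*}
  h\,(z)\ =\ 1\ +\ z\ +\ z^{\,3}\ +\ z^{\,5}\ +\ \ldots\ =\ 1\ +\ \frac{z}{1\ -\ z^{\,2}}\,,
\end{equation*}
whose unique zero inside $\abs{z}\ <\ 1$ is the negative root of $z^{\,2}\ -\ z\ -\ 1\,$; given any $h\ \in\ \W$ with $h\,(-\,r)\ =\ 0\,$, the equation $a_{\,1}\,r\ +\ a_{\,3}\,r^{\,3}\ +\ \ldots\ =\ 1\ +\ a_{\,2}\,r^{\,2}\ +\ a_{\,4}\,r^{\,4}\ +\ \ldots$ with $a_{\,j}\ \in\ [\,0,\,1\,]$ is majorised by $r\,/\,(1\ -\ r^{\,2})\ \geq\ 1\,$, already excluding $r\ <\ (\sqrt{5}\ -\ 1)\,/\,2\,$, and the extension to complex $\lambda$ of small modulus is obtained by a sector-wise perturbation. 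The left-limit at $\pi^{\,-}$ strictly exceeds $\abs{\lambda_{\,\pi}}$ because the optimizer realising $\lambda_{\,\pi}$ lies on the spike $\partial\,\G_{\,S}$ and cannot be reached as a limit of interior optima when $\phi_{\,n}\ \to\ \pi^{\,-}\,$, which accounts for the forced left-discontinuity.

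Part (3) is established by analysing extremal functions at rational angles. At $\phi\ =\ p\,\pi\,/\,q\ \in\ \K\,$, the $\phi-$optimal series $h_{\,\phi}$ can be chosen in $\W_{\;0,\,1}\,$, and the rigidity of the $\{0,\,1\}-$constraint on the coefficients forces two distinct one-sided tangents to $\partial\,\G$ at $\rho_{\,\phi}\,\ue^{\,\ui\,\phi}\,$: perturbing $\phi$ into $\phi\ \pm\ \delta$ causes the optimizer to jump to a nearby but distinct $\{0,\,1\}-$configuration whose first-order effect on $\abs{\lambda_{\,\phi}}$ produces two different slopes, hence a corner. The density of the non-smooth rational angles in $(0,\,2)$ then reflects the density of $\K\ \cap\ \pi\,\Q\,$.

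Part (4) is the most delicate step and I expect it to be the main obstacle. Outside $\K\ \cup\ \pi\,\Q\,$, the optimizer $h_{\,\phi}$ must carry at least one coefficient strictly inside $(0,\,1)\,$; a first-order \textsc{Lagrange} analysis then yields an analytic equation pinning down a unique tangent direction for $\partial\,\G$ at $\rho_{\,\phi}\,\ue^{\,\ui\,\phi}\,$, and smoothness reduces to uniqueness of the optimizer together with non-degeneracy of that equation. The exceptional angles where either fails correspond to resonances between $\phi$ and the spectral structure of $h_{\,\phi}\,$. \textbf{The core of the proof}, and where the main work lies, is to confine all such non-rational resonances to a set $\pi\,L_{\,\mathrm{tr}}$ with $L_{\,\mathrm{tr}}$ of \textsc{Hausdorff} dimension zero: this requires a quantitative uniqueness estimate for the extremal configuration, a careful covering argument in $\phi-$space, and transcendence-type bounds \`a la \textsc{Baker} ensuring that algebraic resonances cannot occur outside $\pi\,\Q\,$.
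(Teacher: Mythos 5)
The first thing to note is that the paper does not prove this statement at all: it is \textsc{Solomyak}'s theorem, quoted verbatim, and the paper's ``proof'' is the single line ``\cite[Sections~\textsection3 and \textsection4]{Solomyak1994}''. So what you have written is not an alternative to the paper's argument but an attempt to reconstruct Solomyak's original proof, and as such it has several genuine gaps. In part (2) your compactness/\textsc{Hurwitz} extraction proves closedness of $\G$ in the open disk, which yields \emph{lower} semicontinuity of $\phi\ \longmapsto\ \rho_{\,\phi}$ (a limit of points of $\G$ at angles $\phi_{\,n}\ \to\ \phi$ lies in $\G$, so $\rho_{\,\phi}\ \leq\ \liminf\rho_{\,\phi_{\,n}}$); the hard direction is \emph{upper} semicontinuity, i.e.\ producing, for angles near $\phi\,$, functions of $\W$ with zeros of modulus not much larger than $\rho_{\,\phi}\,$, and your appeal to ``openness of $D\,(0,\,1)\ \setminus\ \G$ together with star-convexity'' does not do this --- it is essentially the same closedness statement restated. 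Likewise, your proof of the bound $\abs{\lambda_{\,\phi}}\ \geq\ \tfrac{1}{2}\,(\sqrt{5}\ -\ 1)$ is complete only on the negative real axis: off that axis the naive majorisation $1\ =\ \bigl|\sum_{\,j\,\geq\,1}a_{\,j}\,\lambda^{\,j}\bigr|\ \leq\ r\,/\,(1\ -\ r)$ gives only $r\ \geq\ 1/2\ <\ (\sqrt{5}\ -\ 1)/2\,$, and the ``sector-wise perturbation'' you invoke to bridge that gap is not specified; this is precisely where the real work of \cite[Section~\textsection3]{Solomyak1994} lies.

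Parts (3) and (4) are, as written, heuristics rather than proofs. In (3) the assertion that the $\{0,\,1\}$-rigidity of the optimizer ``produces two different slopes'' is the conclusion restated, not an argument: you would need to exhibit the two one-sided derivatives of $\phi\ \longmapsto\ \abs{\lambda_{\,\phi}}$ and show they differ, which requires the first-order variational formula for the optimal zero that you only gesture at in (4). In (4) you correctly identify the difficulty, but the proposed route through ``transcendence-type bounds \`a la \textsc{Baker}'' is speculative and is not how the exceptional set $L_{\,\mathrm{tr}}$ arises in \cite{Solomyak1994}; nothing in your sketch explains why the exceptional angles should be confined to $\pi$ times a set of transcendental numbers of \textsc{Hausdorff} dimension zero. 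Given that the statement is an imported result, the honest courses of action are either to cite \cite{Solomyak1994} as the paper does, or to carry out the reconstruction in full --- the present sketch does neither.
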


\begin{proof}
\cite[Sections~\textsection3 and \textsection4]{Solomyak1994}.
\end{proof}

\begin{figure}
  \begin{center}
    \subfigure[]{\includegraphics[width=0.28\textwidth]{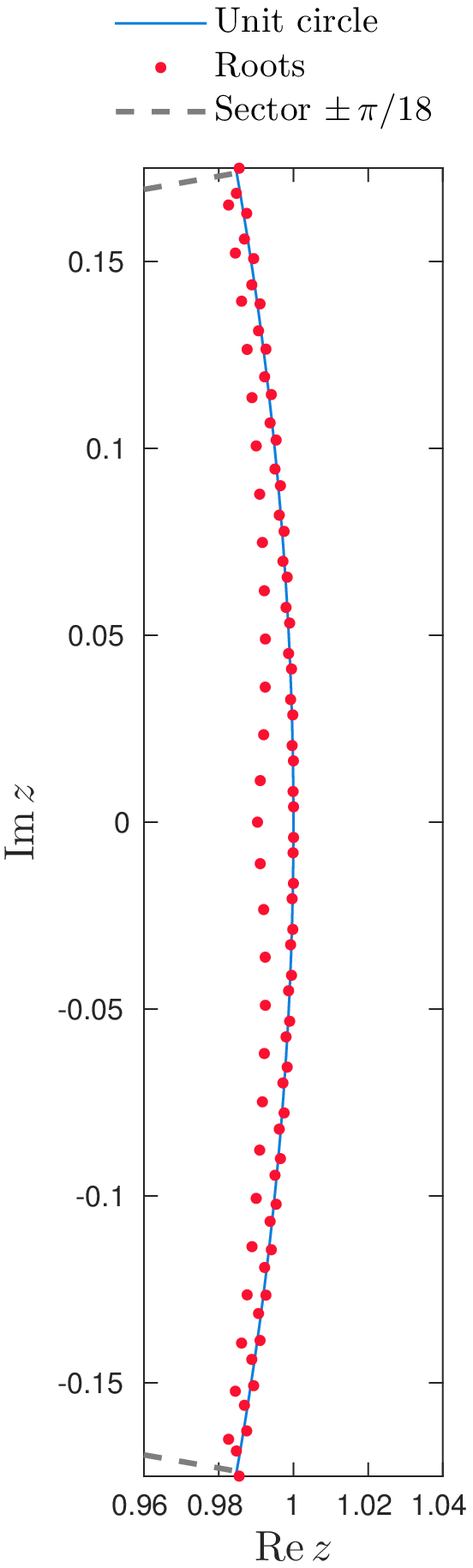}}
    \subfigure[]{\includegraphics[width=0.71\textwidth]{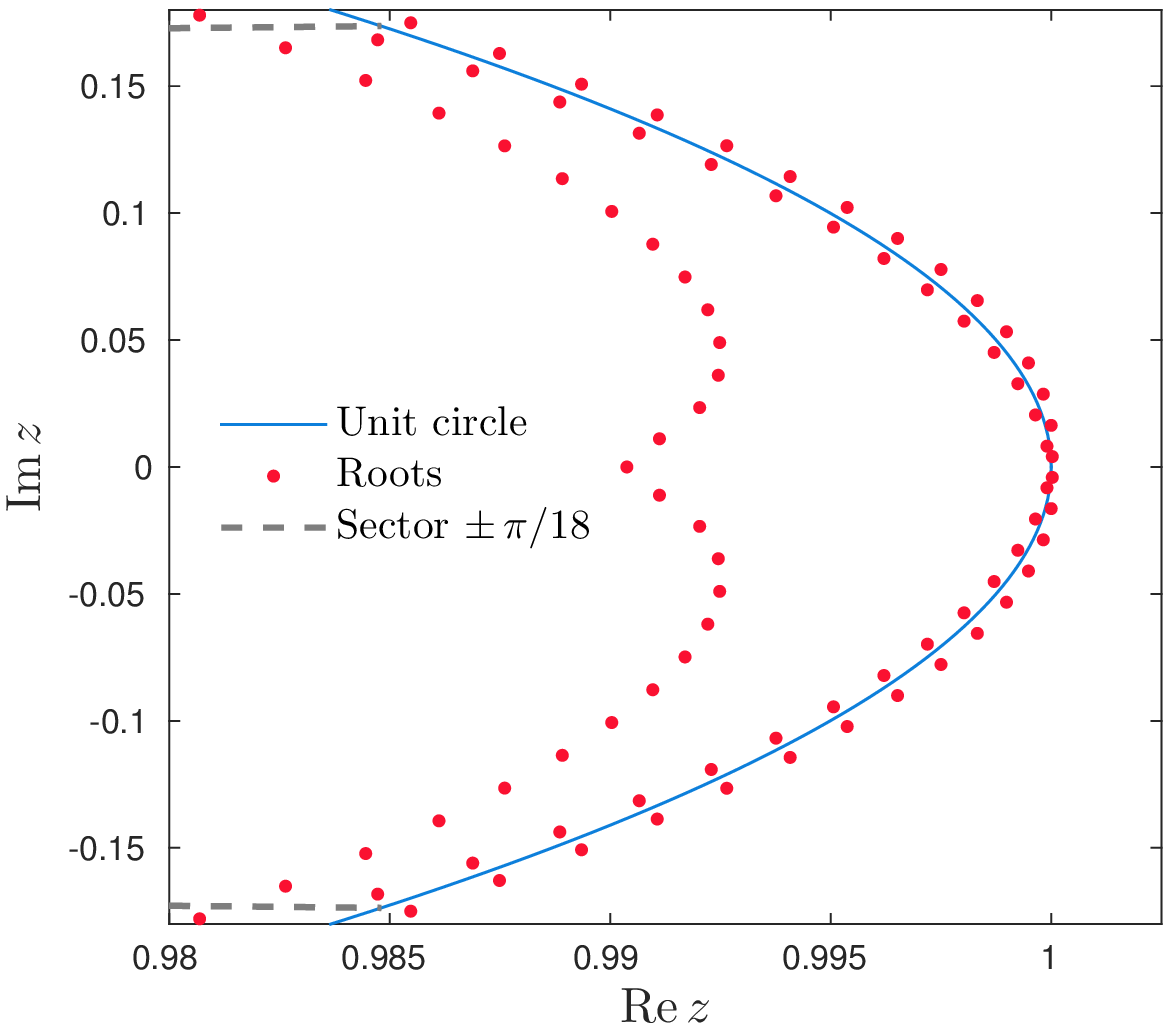}}
  \end{center}
  \caption{\small\em The representation of the $27$ zeroes of the lenticulus of $f\,(x)\ =\ -\,1\ +\ x\ +\ x^{\,481}\ +\ x^{\,985}\ +\ x^{\,1502}$ in the angular sector $-\,\pi/18\ <\ \arg\,z\ <\ \pi/18$ in two different scalings in $x$ and $y$ (in (a) and (b)). In this angular sector the other zeroes of $f\,(x)$ can be found in a thin annular neighbourhood of the unit circle. The real root $1/\beta\ >\ 0$ of $f\,(x)$ is such that $\beta$ satisfies: $1.00970357\ldots\ =\ \theta_{\,481}^{\,-1}\ <\ \beta\ =\ 1.0097168\ldots\ <\ \theta_{\,480}^{\,-1}\ =\ 1.0097202\ldots\,$.}
  \label{ltlent481}
\end{figure}

\begin{figure}
  \begin{center}
    \includegraphics[width=0.95\textwidth]{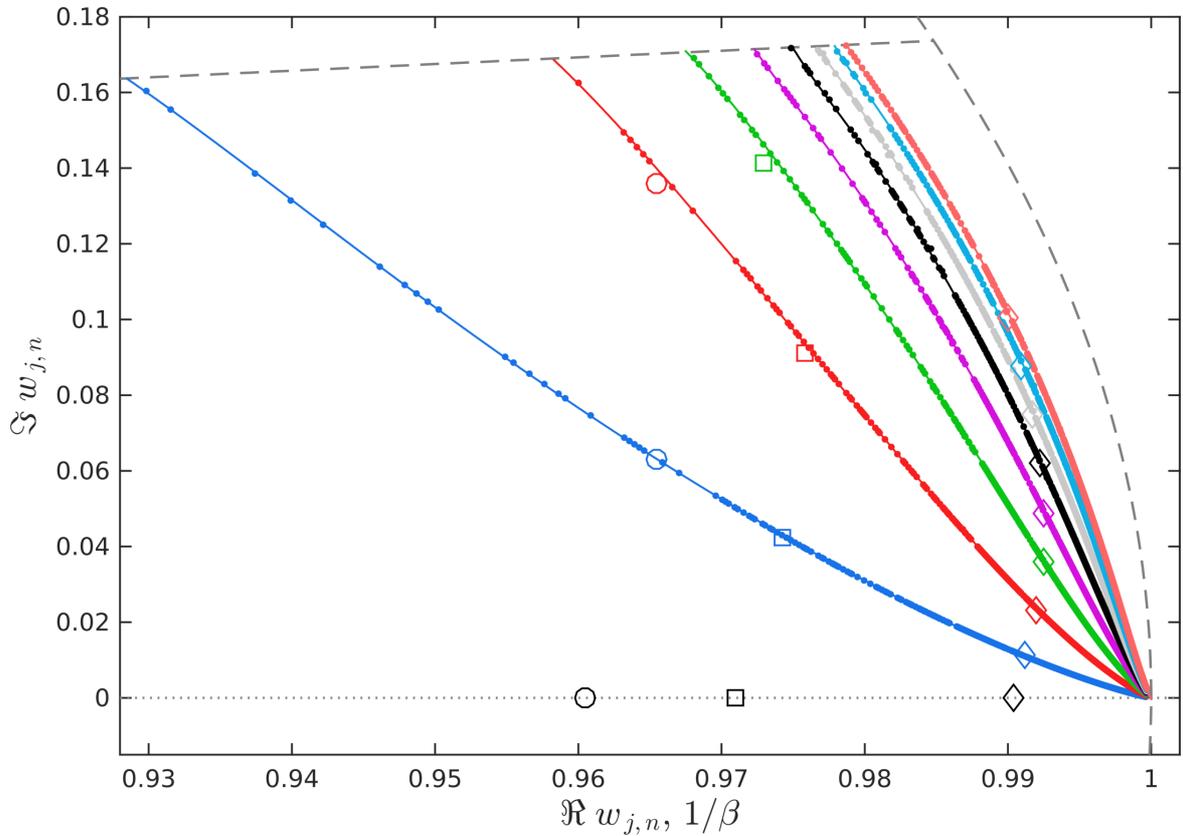}
  \end{center}
  \caption{\small\em Curves stemming from $1$ which constitute the lenticular zero locus of all the polynomials of the class $\B\,$. These (universal) curves are continuous. The first one above the real axis, corresponding to the zero locus of the first lenticular roots, lies inside the boundary of \textsc{Solomyak}'\,s fractal \cite{Solomyak1994}. The lenticular roots of the polynomials $f$ in the examples of the Figures~\ref{lt06}, \ref{lt07} and \ref{ltlent481} are represented by the respective symbols $\circ\,$, $\square\,$, $\diamond\,$. The dashed lines represent the unit circle and the top boundary of the angular sector $\abs{\arg\,z}\ <\ \pi/18\,$. The complete set of curves, \ie the locus of lenticuli, is obtained by symmetrization with respect to the real axis.}
  \label{solomyakEXTENDEDcurves}
\end{figure}

Let $\beta\ >\ 1$ be a real number and $T_{\,\beta}\,:\ [\,0,\,1\,]\ \longmapsto\ [\,0,\,1\,]\,$, $x\ \longmapsto\ \beta\,x\ -\ \lfloor\,\beta\,x\,\rfloor\ =\ \{\,\beta\,x\,\}$ be the $\beta-$transformation. The $i-$iterate of $T_{\,\beta}$ is denoted by $T_{\,\beta}^{\,i}\,$. The orbit $(T_{\,\beta}^{\,i}\,(1))_{\,i\ \geq\ 1}$ of $1$ in the interval $[\,0,\,1\,]$ defines the sequence $(t_{\,i})$ of digits $t_{\,i}\ :=\ \lfloor\,\beta\,T_{\,\beta}^{\,i\,-\,1}\,(1)\,\rfloor\,$, which belong to the alphabet $\{\,0,\,1\,\}$ and satisfy the conditions of \textsc{Parry} (\textsc{Lothaire} \cite[Chap.~7]{Lothaire2002}). The \textsc{Parry} Upper function $f_{\,\beta}\,(z)$ at $\beta$ is defined as the power series having coefficient vector: ``$-1\ t_{\,1}\ t_{\,2}\ t_{\,3}\ \ldots\,$''. When the \textsc{Parry} Upper function $f_{\,\beta}\,(z)$ at $\beta$ is a polynomial, by Lemma~5.1 (ii) in \cite{Flatto1994}, and
\begin{equation*}
  1\ <\ \beta\ <\ \theta_{\,2}^{\,-1}\ =\ \frac{1\ +\ \sqrt{5}}{2}\,,
\end{equation*}
the Conditions of \textsc{Parry} are exactly expressed by the defining conditions
\begin{equation*}
  n\ \geq\ 3\,, \quad s\ \geq\ 0\,, \quad m_{\,1}\ -\ n\ \geq\ n\ -\ 1\,, \quad m_{\,q\,+\,1}\ -\ m_{\,q}\ \geq\ n\ -\ 1 \quad \mbox{for} \quad 1\ \leq\ q\ <\ s
\end{equation*}
of the polynomial $f$ of the class $\B$ in \eqref{basic}, with $f\,(x)\ =\ -\,1\ +\ t_{\,1}\,x\ +\ t_{\,2}\,x^{\,2}\ +\ t_{\,3}\,x^{\,3}\ +\ \ldots\,$. The polynomials $f$ of the class $\B$ can be viewed as all the polynomial sections of all the \textsc{Parry} Upper functions $f_{\,\beta}\,(z)$ at $\beta$ for all
$1\ <\ \beta\ <\ \theta_{\,2}^{\,-1}\,$. The correspondance $\beta\ \longmapsto\ f_{\,\beta}\,(z)$ is one-to-one \cite{Verger-Gaugry2008}.

Now the identity $\lfloor\,\beta\,T_{\,\beta}^{\,i\,-\,1}\,(1)\,\rfloor\ =\ 
\beta\,T_{\,\beta}^{\,i\,-\,1}\,(1)\ -\ T_{\,\beta}^{\,i}\,(1)\,$, $i\ \geq\ 2\,$, implies the factorization
\begin{equation*}
  -\,1\ +\ t_{\,1}\,x\ +\ t_{\,2}\,x^{\,2}\ +\ t_{\,3}\,x^{\,3}\ +\ \ldots\ =\ -\,(1\ -\ \beta\,x)\cdot\Bigl(1\ +\ \sum_{\,j\,\geq\,1}\,T_{\,\beta}^{\,i}\,(1)\,x^{\,i}\Bigr)
\end{equation*}
for which the second factor belongs to $\W\,$. Hence, except the collection of the real zeroes $1/\beta$ which are those of the polynomials $f\ \in\ \B$ in $[\,0,\,1\,]\,$, all the zeroes of the polynomials $f\ \in\ \B\,$, of modulus $<\ 1\,$, lie within \textsc{Solomyak}'\,s fractal domain $\G\,$, having boundary described by Theorem~\ref{solomyakfractalGBordG}. By construction the zero locus of the first roots in Figure~\ref{solomyakEXTENDEDcurves} is included in this boundary. Therefore it has logarithmic tangency at $z\ =\ 1\,$. The zero loci of the second roots, third roots, \etc, closer to $\abs{z}\ =\ 1\,$, in Figure~\ref{solomyakEXTENDEDcurves}, lie within $\G\,$. In Figure~\ref{solomyakEXTENDEDcurves} are represented these (universal) curves on which the zeroes of the preceding examples are reported. A complete study of these curves will be performed in the nearest future.

%%% ----------------------------------------------------------------------- %%%

\section*{Acknowledgements}

The authors would like to thank Dr.~Bill~\textsc{Allombert} (Institut de Math\'ematiques de Bordeaux, France) for helpful discussions on PARI/GP software.

%%% ----------------------------------------------------------------------- %%%

\appendix
\section{Algorithms and programs}

The pseudo-code of the employed \textsc{Monte-Carlo} algorithm and the PARI/GP program~\ref{list} used in the present study is given below:
\begin{algorithm}[H]
  \bigskip
  \begin{algorithmic}
    \Require $N_{\,\max}\ \in\ \N$ \Comment{\it Maximal polynomial degree}
    \Require $M\ \in\ \N$ \Comment{\it Number of Monte-Carlo drawings}
    \State Irreducible[1:$M$] $\gets\ 0$
    \For{$k\ =\ 1$ to $M$}
      \State $N\ \gets\ $ Random\,($\,2\,\ldots\,N_{\,\max}\,$)
      \State $n\ \gets\ $ Random\,($\,2\,\ldots\,N\,$)
      \State $p\,(x)\ =\ -1\ +\ x\ +\ x^{\,n}$ \Comment{\it We initialize with this trinomial}
      \State $m\ \gets\ 2\,n\ -\ 1$ \Comment{$m_{\,1}\ -\ n\ \geqslant\ n\ -\ 1$}
      \While{$m\ \leqslant\ N$}
        \State $\Delta\,m\ \gets\ $ Random\,($\,0\,\ldots\,N\,-\,m\,$)
        \State $p\,(x)\ \gets\ p\,(x)\ +\ x^{\,m\,+\,\Delta\,m}$
        \State $m\ \gets\ m\ +\ \Delta\,m\ +\ n\ -\ 1$ \Comment{$m_{\,s\,+\,1}\ -\ m_{\,s}\ \geqslant\ n\ -\ 1$}
      \EndWhile
      \If{IrreducibilityTest\,$\bigl(p\,(x)\bigr)\ \equiv$ \textbf{True}}
        \State Irreducible\,[$k$] $\ \gets\ 1$
      \EndIf
    \EndFor
    \State $\mathds{P}\ \approx\ \frac{1}{M}\;\sum_{\,k\,=\,1}^{\,M}\;$Irreducible\,[$k$] \Comment{Approximate probability by frequency}
  \end{algorithmic}
  \bigskip
  \caption{\small\em Pseudo-code of the PARI/GP program used to estimate the probability to find an irreducible polynomial in the class $\B_{\,N_{\,\max}}\ =\ \cup_{\,k\,=\,1}^{\,N_{\,\max}}\;\B_{\,k}\,$.}
  \label{algo:mc}
\end{algorithm}

The following PARI/GP script estimates the probability of finding a sparse irreducible polynomial with coefficients in $\{-1,\, 0,\, 1\}$ in the class $\B\,$:
\begin{lstlisting}[label=list]
/* First, we increase the stack size: */
default(parisize, 1073741824); /* 1 Gb */

/* Search horizon in polynomial degree: */
Nmax = 3000;

/* Number of Monte-Carlo runs */
M = 4000;

/* The vector, where we stock the results of the irreducibility test: */
Irred = vector(M);

printf("Some information about computation:\n");
printf("  ->  Maximal polynomial degree: %d\n", Nmax);
printf("  -> Number of Monte-Carlo runs: %d\n\n", M);
printf("Computations started. Please, wait...");

ts = getabstime(); /* Record start time */
/* The main loop over realizations!: */
for (i = 1, M, {
  printf("iter = %d\n", i);
  N = 2 + random(Nmax - 1);
  n = 2 + random(N - 1);
  /* print(n); */
  /* P is the vector of coefficients */
  P = concat(concat(1, vector(n-2)), [1, -1]);
  p = length(P); /* we shall need it below */
  m = 2*n - 1; /* the next term has the degree >= m */
  while (m <= N,
    s = m + random(N - m + 1);
    P = concat(concat(1, vector(s - p)), P);
    p = length(P);
    m = s + n - 1;
  );
  pp = Pol(P, x); /* Convert vector to the polynomial: */
  /* print(pp); */
  if (polisirreducible(pp), /* if polynomial is irreducible, we note it */
    Irred[i] = 1;
  );
});
te = getabstime(); /* Simulation end time */
printf("Done. Execution time = %.3f s.\n", (te-ts)/1000);

/* Let's do some statistical analysis of the obtained data */
Mean = vecsum(Irred)/M;
Var  = 0.0;
for (i = 1, M, {
  Var += (Irred[i] - Mean)^2;
});
Var = sqrt(Var/(M - 1));
Err = 1.645*Var/sqrt(M);

printf("Estimated probability: %1.3f\n", Mean);
printf("Estimated 90%%-error  : %1.5f\n", Err);
printf("Confidence interval  : [%1.5f, %1.5f]\n",\
 max(0.0, Mean - Err), min(Mean + Err, 1.0));
\end{lstlisting}

%%% ----------------------------------------------------------------------- %%%

%%% Bibliography
\bigskip\bigskip
\addcontentsline{toc}{section}{References}
\bibliographystyle{abbrv}
\bibliography{biblio}
\bigskip\bigskip

\end{document}